\newcommand{\actson}{\curvearrowright}
\newcommand{\fat}[1]{\mathds{#1}}
\newcommand{\hsm}[1]{\mathcal{#1}}
\newcommand{\EE}{\fat{E}}
\newcommand{\NN}{\fat{N}}
\newcommand{\ZZ}{\fat{Z}}
\newcommand{\RR}{\fat{R}}
\renewcommand{\SS}{\fat{S}}
\newcommand{\THEN}{\;\Rightarrow\;}
\newcommand{\IFF}{\;\Leftrightarrow\;}
\newcommand{\minus}{\smallsetminus}
\newcommand{\ep}{\hfill $\square$}
\newcommand{\cl}[1]{\overline{#1}}
\newcommand{\card}[1]{\left|#1\right|}
\newcommand{\inv}{^{{\scriptscriptstyle -1}}}
\newcommand{\II}{\hsm{I}(G)}
\newcommand{\IImax}{\hsm{I}_{_{max}}(G)}
\newcommand{\bd}{\partial}
\newcommand{\bdCone}{\partial_\infty}
\newcommand{\bdTits}{\partial_{_\mathrm{T}}}
\newcommand{\clCone}[1]{\mathrm{cl}_{\infty}\!\!\left(#1\right)}
\newcommand{\clTits}[1]{\mathrm{cl_{_T}}\!\!\left(#1\right)}
\newcommand{\core}{\mathrm{core}\bdTits X}
\newcommand{\diam}{\mathrm{diam}}
\newcommand{\nbd}[1]{\mathcal{N}_{#1}}
\newcommand{\ball}[2]{B_d\!\left(#1,#2\right)}
\newcommand{\Titsball}[2]{\mathrm{B_{_T}}\!\!\left(#1,#2\right)}
\newcommand{\Tits}[2]{\mathrm{d_{_T}}\!\!\left(#1,#2\right)}
\newcommand{\minG}{\hsm{M}(G)}
\newcommand{\UF}{\beta G}
\newcommand{\ultra}[1]{T^{#1}}
\newcommand{\ulim}[2]{\displaystyle\lim_{#2\to #1}}
\newcommand{\repel}[1]{#1(-\infty)}
\newcommand{\attr}[1]{#1(\infty)}
\newcommand{\connect}[3]{#2\stackrel{#1}{\rightsquigarrow}#3}
\newcommand{\suspend}[2]{\Sigma\!\left(#1,#2\right)}
\newcommand{\equator}[2]{\mathrm{E}\!\left(#1,#2\right)}
\newcommand{\vol}[1]{\mathrm{vol}\left(#1\right)}
\newcommand{\comp}[3]{\mathrm{comp}_{#1}\left(#2,#3\right)}
\theoremstyle{plain}
\newtheorem{thm}{Theorem}[section]
\newtheorem{prop}[thm]{Proposition}
\newtheorem{lemma}[thm]{Lemma}
\newtheorem{cor}[thm]{Corollary}
\newtheorem{conjecture}[thm]{Conjecture}
\theoremstyle{definition}
\newtheorem{defn}[thm]{Definition}
\newtheorem{remark}[thm]{Remark}
\begin{document}
\title[A transversal for minsets in CAT(0) boundaries]{A `transversal' for minimal invariant sets in the boundary of a CAT(0) group}
\author[D.\ P.~Guralnik]{Dan P.\ Guralnik}
\address{Electric \& Systems Engineering\\ 
	University of Pennsylvania\\
	200 South 33rd Street,
	Philadelphia, PA 19104}
\email{guraldan@seas.upenn.edu}
\author[E.\ L.~Swenson]{Eric L.\ Swenson}
\address{Dept. of Mathematics\\
	Brigham Young University\\
	Provo, UT 84602}
\email{eric@mathematics.byu.edu}
\thanks{This work was partially supported by a grant from the Simons Foundation (209403 to Eric Swenson), and carried out while Dan Guralnik was a post-doctoral fellow at the University of Oklahoma Mathematics Department.}

\begin{abstract} We introduce new techniques for studying boundary dynamics of CAT(0) groups. For a group $G$ acting geometrically on a CAT(0) space $X$ we show there is a flat $F\subset X$ of maximal dimension (denote it by $d$), whose boundary sphere intersects every minimal $G$-invariant subset of $\bdCone X$. As applications we obtain an improved dimension-dependent bound \[\diam\bdTits X\leq 2\pi-\arccos\left(-\frac{1}{d+1}\right)\] on the Tits-diameter of $\bd X$ for non-rank-one groups, a necessary and sufficient dynamical condition for $G$ to be virtually-Abelian, and we formulate a new approach to Ballmann's rank rigidity conjectures.
\end{abstract}

\maketitle

\section{Introduction.} A central notion in the study of isometric group actions on CAT(0) spaces is the notion of rank. Recall from \cite{[Ballmann-DMV]} that a geodesic line $\ell:\RR\to X$ is said to have rank one, if $\ell$ does not bound a flat half-plane. A hyperbolic isometry of $X$ is said to have rank one, if it has a rank one translation axis. 

Given an isometric group action $G\actson X$ one says that $G$ has rank one, if some $g\in G$ acts on $X$ as a rank one hyperbolic isometry. If $G$ does not have rank one, we say that $G$ has higher rank. By a result of Ballmann (\cite{[Ballmann-Brin-orbihedra]}), the property of a geometric action of $G$ on CAT(0) space having rank one depends only on the group $G$.

Two major problems in the field are the following two conjectures, stated by Ballmann and Buyalo in \cite{[Ballmann-Buyalo-2pi]}:
\begin{conjecture}[Closing Lemma]\label{conj:closing lemma} Suppose $G$ acts properly discontinuously by isometries on a complete CAT(0) space $X$ (also known as a {\it Hadamard space}). If $\Lambda G=\bd X$ and $\diam(\bdTits X)>\pi$ then $X$ contains a $G$-periodic rank one geodesic.
\end{conjecture}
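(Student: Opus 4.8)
The plan is to argue by contraposition, keeping $\Lambda G=\bd X$ as a standing hypothesis: assuming $G$ admits no periodic rank one geodesic, I would try to show $\diam\bdTits X\le\pi$. The engine is the main theorem announced in the abstract, which provides a flat $F\subset X$ of maximal dimension $d$ whose boundary sphere $S:=\bdCone F$ (a round $(d-1)$-sphere of Tits diameter $\pi$) meets every minimal $G$-invariant closed subset of $\bdCone X$. Since $\Lambda G=\bdCone X$ is nonempty, closed and $G$-invariant, Zorn's lemma shows that every nonempty closed $G$-invariant subset of $\bdCone X$ contains a minimal one; applying this to orbit closures $\cl{G\xi}$ we get that $S$ is a genuine transversal, i.e.\ $\cl{G\xi}\cap S\ne\emptyset$ for every $\xi\in\bdCone X$. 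The hypothesis ``no periodic rank one geodesic'' will be deployed to forbid rank one directions near $S$, thereby forcing the Tits geometry of the entire boundary to be ``concentrated'' on $S$ and its $G$-translates.

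Concretely, I would pick $\xi,\eta\in\bdCone X$ realizing the Tits diameter to within $\varepsilon$, choose $g_n,h_n\in G$ with $g_n\xi\to\xi_0\in S$ and $h_n\eta\to\eta_0\in S$, and use lower semicontinuity of the Tits metric in the cone topology together with compactness of $\bdCone X$ to bound $\Tits{\xi_0}{\eta_0}$ from below by the diameter minus a loss governed by how far a Tits-geodesic from $\xi_0$ to $\eta_0$ may wander off $S$. The combinatorial heart is an estimate, in the spirit of \cite{[Ballmann-Buyalo-2pi]}, that such a geodesic leaving $S$ is pulled back onto a $G$-translate of $S$ after turning through at most $\arccos(-\tfrac1{d+1})$: if it turned more, one could extract a direction transverse to $F$ along which a ray does not bound a flat half-plane, i.e.\ a rank one direction. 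Combining the $\pi$ of diameter available inside $S$ with that turning budget gives $\diam\bdTits X\le 2\pi-\arccos(-\tfrac1{d+1})$, the improved bound quoted in the abstract; to reach the Closing Lemma one then needs, in addition, a ``closing'' step: using $\Lambda G=\bd X$, approximate the endpoints of the rank one direction above by attracting/repelling fixed points $\attr g,\repel g$ of a suitable $g\in G$, so that $g$ itself becomes rank one and its axis is the desired periodic rank one geodesic.

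The principal obstacle is that the scheme as sketched only delivers the bound $2\pi-\arccos(-\tfrac1{d+1})$, which is $>\pi$ for every $d\ge1$ (it equals $\tfrac{4\pi}{3}$ when $d=1$ and increases toward $\tfrac{3\pi}{2}$), so it approximates but does not prove the conjecture; the transversal theorem alone is not enough. What is missing is a rigidity upgrade of the dimension count into an honest dichotomy, for example: (i) show that $\diam\bdTits X>\pi$ already forces $d=1$, so that $S=\{\attr F,\repel F\}$, the turning budget collapses, and the line $F$ — or a nearby $G$-translate closed up as above — is rank one and periodic; or (ii) iterate the transversality inside the part of $\bdCone X$ not ``seen'' from $F$, producing a descending chain of flats that must terminate in a rank one direction. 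Making either (i) or (ii) rigorous — turning the transversal into a genuine induction — is where I expect the real difficulty to lie, and it is presumably the substance of the ``new approach to Ballmann's rank rigidity conjectures'' the paper proposes, rather than something the transversal theorem settles by itself.
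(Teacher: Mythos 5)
The statement you were asked about is not a theorem of this paper: it is Conjecture~\ref{conj:closing lemma}, the Closing Lemma of Ballmann--Buyalo, which the paper leaves open. So there is no proof here to compare yours against, and your proposal---as you candidly admit in the last paragraph---does not prove it either: the transversal/folding machinery (Theorem~\ref{basic tool}) only yields the diameter bound $2\pi-\arccos\left(-\frac{1}{d+1}\right)$ of Theorem~\ref{new diameter bound}, which exceeds $\pi$ for every $d\geq 1$. You have correctly located the gap; what you have written is at best a sketch of the diameter bound plus an accurate statement of what is missing. Two further cautions about the sketch itself: first, the ``turning budget'' mechanism you propose (a Tits geodesic leaving $S$ being pulled back after turning at most $\arccos(-\tfrac{1}{d+1})$, else a rank one direction appears) is not how the paper obtains its bound, and I see no argument for it; the paper's proof of Theorem~\ref{new diameter bound} instead takes an $\epsilon$-extreme point $p$, a minset $M\subseteq\hsm{D}(p)$, uses quantified extremality (Lemma~\ref{quantified extremality}) and $\pi$-convergence to get $\diam(M\cap\SS)\leq\pi-\epsilon$, covers $\SS$ by hemispheres centered on $M\cap\SS$ via Theorem~23 of \cite{[Papasoglu-Swenson-JSJ]}, and derives a contradiction from the spherical circumradius estimate when $\pi-\epsilon<\arccos(-\tfrac{1}{d+1})$. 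Second, the conjecture's hypotheses (properly discontinuous action, $\Lambda G=\bd X$) are strictly weaker than the geometric action required by Theorem~\ref{basic tool}, so even the parts of your sketch that do work would need co-compactness added or removed carefully.

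For the record, the paper's own (conditional) route toward the Closing Lemma is different from your proposed rigidity upgrade: it goes through incompressible sets. Corollaries~\ref{approach to closing lemma} and~\ref{easier approach to closing lemma} show that if the family $\II$ of maximal incompressible sets covers $\bd X$ (or merely covers the transversal sphere $\SS$), then higher rank forces $\diam\bdTits X=\pi$; contrapositively $\diam\bdTits X>\pi$ would give rank one, and Proposition~\ref{prop:Ballmann-Buyalo} then converts rank one together with $\Lambda G=\bd X$ into a $G$-periodic rank one geodesic. The missing ingredient is exactly Conjecture~\ref{conj:higher rank boundary is covered by incompressibles} (that $\II$ covers $\bd X$ in higher rank), which plays the role of the ``honest dichotomy'' you say is needed. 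So your diagnosis of where the difficulty lies is sound, but neither your alternatives (i) nor (ii) is carried out, and (i) in particular is false as stated: higher rank boundaries with $d\geq 2$ and Tits diameter in $(\pi,\tfrac{3\pi}{2}]$ are precisely what the conjecture must rule out, so one cannot simply assert that $\diam\bdTits X>\pi$ forces $d=1$ without proving the conjecture itself.
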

\begin{conjecture}[Rank Rigidity]\label{conj:rank rigidity} Suppose $G$ acts properly discontinuously by isometries on a complete CAT(0) space $X$ so that $\Lambda G=\bd X$. If $X$ is geodesically complete and $\diam(\bdTits X)=\pi$, then $X$ is a symmetric space or Euclidean building of higher rank, or $X$ is reducible.
\end{conjecture}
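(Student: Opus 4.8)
\medskip
\noindent\emph{A strategy for the Rank Rigidity conjecture.}
The plan is to use the maximal flat $F$ furnished by the main theorem as the model apartment for a spherical building structure on $\bdTits X$, and then to appeal to the metric characterization of symmetric spaces and Euclidean buildings. I treat first the case in which $G$ acts cocompactly, so that the main theorem applies; the non-uniform case should be reduced to this one by passing to a $G$-cocompact convex subspace (a ``core''). Observe next that $G$ must be higher rank: if $G$ had a rank-one element $g$, then $\attr{g}$ would be isolated in the Tits metric, with $\Tits{\attr{g}}{\eta}=\infty$ for every $\eta\neq\attr{g}$, whence $\diam\bdTits X=\infty$, contrary to hypothesis. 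If $X$ splits as a nontrivial metric product we are in the ``reducible'' alternative, so assume also that $X$ is irreducible; note that then $d\geq 2$, since a geometric action of a higher-rank group forces a $2$-flat by the Flat Plane Theorem. Now invoke the main theorem to fix a flat $F\subset X$ of maximal dimension $d$ whose boundary sphere $S:=\bd F$ — a round $(d-1)$-sphere of diameter $\pi$ inside $\bdTits X$ — meets every minimal $G$-invariant subset of $\bdCone X$.

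The heart of the argument is to upgrade this to $\bdCone X=\bigcup_{g\in G}gS$: every boundary point should lie on the boundary sphere of a top-dimensional flat. It is here that geodesic completeness (unused in the main theorem) and the duality property $\Lambda G=\bd X$ enter. Let $Z\subseteq\bdCone X$ be the set of points lying on $\bd F'$ for some $d$-flat $F'$; translating the $F'$ to meet a fixed ball and applying an Arzela--Ascoli argument (valid by properness and cocompactness) shows $Z$ is closed, and it is patently $G$-invariant and nonempty. Suppose $Z\neq\bdCone X$ and pick $\xi\notin Z$. The orbit closure $\cl{G\xi}$ contains a minimal invariant set $M$, and $M\cap S\neq\emptyset$ by the main theorem, so $g_n\xi\to\eta$ for some $g_n\in G$ and some $\eta\in S$. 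One then wants to ``run the flat backwards'': extend the ray $[o,\xi)$ to a bi-infinite geodesic by geodesic completeness, use duality to control the dynamics of the $g_n\inv$, and transport a $d$-flat asymptotic to $\eta$ back along the $g_n\inv$ to produce one asymptotic to $\xi$, contradicting $\xi\notin Z$. Running the same mechanism on Tits-antipodal pairs $\{\xi,\hat\xi\}$ should show that Tits-antipodal boundary points are always joined by a $d$-flat; combined with $\diam\bdTits X=\pi$ and the dimension-optimality of $F$, this should force the family $\{gS\}_{g\in G}$ to obey the incidence axioms of a thick spherical building of rank $d$, with walls arising from codimension-one subflats. The Tits-diameter estimate of the abstract plays the role of a quantitative control on how geodesics transverse to $F$ can hit $S$, and feeds into this last step.

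Granting that $\bdTits X$ is a thick, irreducible spherical building of dimension $d-1\geq 1$, the conclusion follows from Leeb's metric characterization (a cocompact, geodesically complete CAT(0) space with such a Tits boundary is a Riemannian symmetric space of noncompact type or a Euclidean building), in the spirit of the Burns--Spatzier and Kleiner--Leeb rigidity theorems. I expect the second paragraph to be the main obstacle: the transversal theorem is a statement about the \emph{closed} dynamics of $G$ on $\bdCone X$ — it pins $S$ against minimal invariant sets — whereas the building axioms demand control of \emph{every} boundary point together with all incidences among flats, and bridging that gap is exactly what the new geometric input (geodesic completeness, the duality condition, and the dimension-dependent Tits bound) is meant to supply. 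A realistic intermediate target, and perhaps the most that is cleanly attainable at present, is the case in which $d$ already equals the geometric dimension of $\bdCone X$, or in which $G$ is assumed a priori to contain a uniform lattice in a higher-rank semisimple Lie group or in a thick Euclidean building.
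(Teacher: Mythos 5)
This statement is Conjecture \ref{conj:rank rigidity} (Rank Rigidity): the paper does not prove it, and indeed offers no proof for you to be compared against --- it only develops machinery (the folding lemma, incompressible sets, $\II$ and $\IImax$) and an explicitly conjectural program (conjectures \ref{conj:strongly collapsible boundary implies rank one} and \ref{conj:higher rank boundary is covered by incompressibles}, and the closing discussion about $\bigcup\IImax$ carrying a building structure). Your text is likewise a strategy sketch rather than a proof, and its central step is exactly the open content. Concretely: the passage from ``$\SS$ meets every minimal $G$-invariant subset of $\bdCone X$'' to ``every point of $\bdCone X$ lies on the boundary sphere of a top-dimensional flat,'' and from there to the incidence axioms of a thick spherical building for the family $\{gS\}_{g\in G}$, is asserted (``run the flat backwards,'' ``should force the building axioms'') but not carried out, and no known argument supplies it; you acknowledge this yourself. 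There are also two more specific problems. First, the conjecture assumes only a properly discontinuous action with $\Lambda G=\bd X$, whereas Theorem A requires a geometric (in particular cocompact) action; your proposed reduction ``pass to a $G$-cocompact convex core'' is not available for CAT(0) spaces in general (there is no analogue of the convex core construction that guarantees cocompactness), so the non-uniform case is not reduced. Second, even granting that $\bdTits X$ is a thick irreducible spherical building, Leeb's theorem (as quoted in the paper) needs $X$ complete, geodesically complete, locally compact and the building non-discrete; under the conjecture's hypotheses local compactness is not assumed, and the non-discreteness/thickness of the putative building is part of what must be proved, not a free consequence.

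It is also worth noting that your proposed route differs from the paper's program: you would take the translates of a single maximal boundary sphere as apartments, while the paper instead studies the poset of incompressible sets, shows maximal ones are compact convex spherical polytopes (Theorem C), folds all of $\bd X$ onto a volume-maximal element $A_0\in\IImax$ (Theorem D), and conjectures that $\bigcup\IImax$ tiles such spheres and carries the building structure. Neither route is completed; the gap you flag in your second paragraph is precisely where both stop.
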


As to the closing lemma, in their paper \cite{[Ballmann-Buyalo-2pi]} Ballmann and Buyalo prove that $\bdTits X$ having diameter greater than $2\pi$ implies $G$ having rank one (in fact, their assumptions do not require the co-compactness of the action -- only that $\bd X$ coincides with the limit set of $G$). This bound on the diameter of a higher rank group was later improved by Papasoglu and Swenson in \cite{[Papasoglu-Swenson-JSJ]}, to the value $3\pi/2$. In this paper we improve this bound to \[\diam\bdTits X\leq 2\pi-\arccos\left(-\frac{1}{d+1}\right),\] where $d$ is the geometric dimension of $\bdTits X$ (theorem \ref{new diameter bound}).\\

The conjectures are resolved (in the positive) for a variety of cases, e.g. when $X$ is a Hadamard manifold and $G$ has finite co-volume (this is the original rank rigidity theorem of Ballmann -- see \cite{[Ballmann-DMV]}, theorem C), for certain $G$-co-compact cell complexes of low dimensions (see discussion in the introduction of \cite{[Ballmann-Buyalo-2pi]}), and when $X$ is a cubing and $G\actson X$ cellularly and co-compactly (Caprace-Sageev \cite{[Caprace-Sageev-rank-rigidity]}). The Caprace-Sageev approach depends very strongly on the wall structure of a cubing, and does not seem to generalize to arbitrary CAT(0) spaces.\\

The following result of Leeb indicates another possible approach:
\begin{thm}[Leeb, \cite{[Leeb]}] Let $H$ be a complete and geodesically complete locally-compact Hadamard space. If the ideal boundary $\bd H$ equipped with the Tits metric is a non-discrete join-irreducible spherical building, then $H$ is an affine building or a symmetric space.
\end{thm}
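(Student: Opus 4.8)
The strategy is to reconstruct $H$ from the building $B:=\bdTits H$ in three stages: (1) produce a rich supply of top-dimensional flats in $H$ out of the apartments of $B$; (2) identify each space of directions $\Sigma_{p}H$ as a spherical building of the same type; (3) invoke the classification of non-discrete spherical buildings to name the candidate model and then show that $H$ is forced to coincide with it. Write $n=\dim B+1$ for the rank of $B$; one may assume $n\geq 2$, the rank-one case (where $B$ is a single topological sphere of ``chambers'') being degenerate. For Stage 1, use geodesic completeness: every geodesic segment extends to a line $\ell\colon\RR\to H$, and its two ideal endpoints are at Tits distance $\geq\pi$, hence --- $B$ being spherical of diameter $\pi$ --- antipodal, so they lie in a common apartment $A\cong S^{n-1}$ of $B$. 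The first main lemma is that every such top-dimensional apartment bounds an isometrically embedded flat, i.e.\ $A=\bdTits F$ for some $F\cong\RR^{n}$ in $H$. I would prove this with the flat-strip/sandwich machinery: from $\ell$ and the chambers of $A$ one builds a family of rays whose directions at infinity sweep out $A$, passes to the closed convex hull of this family, and uses the CAT(0) inequality together with geodesic completeness to recognize the hull as Euclidean $n$-space with ideal boundary $A$. Thus $H$ is covered by $n$-flats and has the ``geodesic extension to flats'' property.

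For Stage 2, fix $p\in H$. The unit spheres $\Sigma_{p}F\subseteq\Sigma_{p}H$ of the $n$-flats through $p$ are round $(n-1)$-spheres, and I would show --- transporting the building axioms of $B$ along the correspondence between flats and apartments, with angles computed by CAT(0) comparison --- that these spheres form a system of apartments exhibiting $\Sigma_{p}H$ as a spherical building; moreover thickness of $B$ propagates to thickness of $\Sigma_{p}H$, and since the type of $B$ is irreducible (as $B$ is join-irreducible) the link is automatically irreducible. A subsidiary use of the de Rham splitting theorem for Hadamard spaces, again through join-irreducibility, shows $H$ itself carries no nontrivial metric product factor, in particular no Euclidean one. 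Hence $H$ is a geodesically complete Hadamard space all of whose links --- and whose Tits boundary --- are thick irreducible spherical buildings modelled on one fixed Coxeter system.

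For Stage 3, non-discreteness of $B$ is what makes the asymptotic data rigid: for rank $\geq 3$ this is Tits' classification of thick irreducible spherical buildings, while for rank $2$ the topological-building (non-discreteness) hypothesis supplies the missing rigidity after Burns--Spatzier. Either way, $B$ is the spherical building $\Delta(\mathbf{G},k)$ of an almost-simple algebraic group $\mathbf{G}$ over a field $k$ with a non-discrete Hausdorff topology, and the candidate model is accordingly either the symmetric space $\mathbf{G}(k)/K$ (the Archimedean case, e.g.\ $k=\RR$ or $\mathbb{C}$) or the Bruhat--Tits affine building of $\mathbf{G}$ over $k$ (the non-Archimedean case). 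It then remains to prove the real content of the theorem: the \emph{rigidity} assertion that a geodesically complete Hadamard space whose Tits boundary is $\Delta(\mathbf{G},k)$, with links of the model type, is isometric to the model $M$. I would attack this by extracting, from the Moufang structure now carried by the links $\Sigma_{p}H$ and by $B$, enough isometries of $H$ --- transvections along the flats of Stage 1 --- to move any one flat onto any other, and then assembling them into a boundary-preserving isometry $H\to M$, using the uniqueness (from Stage 1) of the flat filling a prescribed apartment.

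I expect the rigidity step of Stage 3 to be the main obstacle, for two reasons. First, one must pass from the \emph{incidence} data carried by the links and by $B$ to genuine \emph{isometries} of $H$: the Moufang root groups live a priori only in automorphism groups of the combinatorial buildings, and seeing that they are realized by transvections of the CAT(0) metric --- hence that the metric, not merely the incidence geometry, is determined --- is where the interaction of the Moufang property with CAT(0) convexity does the real work. Second, one must show that the discrete-versus-non-discrete dichotomy is resolved \emph{uniformly} over $H$, i.e.\ that the type of a single link $\Sigma_{p}H$ already matches the asymptotic type of $B$; this is a continuity/connectedness argument along $H$, made delicate because $H$ is not assumed to be a manifold. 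By contrast Stages 1 and 2 are, modulo quarantining the non-thick and reducible degeneracies, fairly routine CAT(0) comparison geometry.
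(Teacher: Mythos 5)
This statement is not proved in the paper at all: it is quoted verbatim as Leeb's theorem (the citation \cite{[Leeb]}), and its proof occupies Leeb's monograph, so there is no in-paper argument to compare yours against. Judged on its own, your outline does track the broad architecture of the known proof (fill apartments of $\bdTits H$ by flats in $H$; show the spaces of directions $\Sigma_pH$ are spherical buildings; identify the building via Tits' classification in rank $\geq 3$ and the topological-building results in rank $2$; conclude $H$ is the associated symmetric space or Euclidean building), but as a proof it has genuine gaps, and you have located the main one yourself without closing it.

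Concretely: Stage 1, that every top-dimensional apartment of $\bdTits H$ is the Tits boundary of an $n$-flat in $H$, is itself a substantial theorem and is not delivered by ``flat-strip/sandwich machinery'' applied to the convex hull of a family of rays -- one must first produce lines with prescribed antipodal endpoints and then build the flat inductively using the building structure at infinity, and controlling the convex hull of asymptotic data is exactly the hard part. More seriously, the rigidity step of Stage 3 as you describe it presupposes what must be proved: the Moufang root groups act on the combinatorial buildings $B$ and $\Sigma_pH$, but nothing in the hypotheses provides any nontrivial isometries of $H$, so ``transvections along the flats'' are not available, and boundary automorphisms of a Hadamard space need not extend to isometries of the space. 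Leeb's argument does not import isometries from $\mathrm{Aut}(B)$; it reconstructs the model directly (equipping $H$ itself with the structure of a Euclidean building, or in the Archimedean case identifying the metric via the links and holonomy of the flats), which is precisely the content your sketch defers. As it stands the proposal is a correct road map with the decisive steps missing, not a proof.
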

Thus, tackling the rank rigidity conjecture becomes an issue of classifying the possible Tits boundaries of proper CAT(0) spaces admitting properly-discontinuous group actions with full limit sets.\\

Taken with the Tits metric, the ideal boundary $\bd X$ of $X$ becomes the CAT(1) space $\bdTits X$, which is known to be finite-dimensional (\cite{[Kleiner-geom-dim]}, \cite{[Swenson-cut-point]}). Lytchak obtains a remarkable canonical decomposition for such spaces:
\begin{thm}[Lytchak, \cite{[Lytchak-join-decomposition]}, corollary 1.2]\label{Lytchak} Let $Z$ be a finite-dimensional geodesically complete CAT(1) space. Then $Z$ has a unique decomposition
\begin{displaymath}
	Z=\fat{S}^\ell\ast B_1\ast\ldots\ast B_k\ast Z_1\ast\ldots\ast Z_m\,, 
\end{displaymath}
where $k,m\geq 0$ and $\ell\geq -1$, $\fat{S}^\ell$ is the $\ell$-dimensional sphere of unit sectional curvature, every $B_i$ is a thick irreducible building and every $Z_j$ is an irreducible non-building. 
\end{thm}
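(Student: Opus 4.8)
The plan is to build the decomposition in three stages: first produce \emph{some} spherical join splitting of $Z$ into a maximal round-sphere factor and finitely many irreducible non-sphere factors; then recognize each irreducible factor as either a thick building or a genuine non-building; and finally argue that the splitting so obtained is canonical, hence unique up to reordering.

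\textbf{Stage 1 (existence of a finest join splitting).} Recall that a CAT(1) space $Z$ is a nontrivial spherical join $Z=A\ast B$ exactly when it carries two nonempty closed subsets $A,B$ with $d(a,b)=\pi/2$ for all $a\in A$, $b\in B$, and the metric of $Z$ is the join metric assembled from those of $A$ and $B$; call $Z$ \emph{irreducible} if no such splitting exists. Under the stated hypotheses one first shows $Z$ has a \emph{maximal} round-sphere join factor $\SS^\ell$: the natural candidate is the join of all round circles through a fixed point whose join complement is all of $Z$, where geodesic completeness is what forces these circles to close up and finite geometric dimension bounds $\ell$. Writing $Z=\SS^\ell\ast W$ with $W$ carrying no spherical factor, one iterates the splitting on $W$; since $\dim(A\ast B)=\dim A+\dim B+1$ and $\dim Z<\infty$, the process terminates, yielding $Z=\SS^\ell\ast Y_1\ast\cdots\ast Y_n$ with each $Y_i$ irreducible and not a sphere.

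\textbf{Stage 2 (the dichotomy).} For each $Y_i$ one asks whether it supports the structure of a thick spherical building. By the classical structure theory of spherical buildings, every spherical building is a spherical join of a thin part — which as a metric space is just a round sphere — and finitely many thick irreducible buildings; hence an irreducible CAT(1) space that is a building but not a sphere is automatically a thick irreducible building. So each $Y_i$ is, by this dichotomy, either one of the $B_i$ (thick irreducible building) or, by default, one of the $Z_j$ (irreducible non-building), and Lytchak's recognition criteria for CAT(1) spaces — which detect the building condition through the combinatorics of the spaces of directions $\Sigma_p Z$ — make this dichotomy effective.

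\textbf{Stage 3 (uniqueness — the crux).} The remaining and hardest task is to show the splitting of Stage 1 is intrinsic: this is a metric de Rham theorem, asserting that the equivalence relation ``lie in a common irreducible join factor'' on geodesics (equivalently, on directions at a basepoint) is well defined, independent of the chosen splitting. Lacking a holonomy group, one argues via the fine geometry of suspensions and $2$-spheres inside $Z$ and an induction on dimension through the spaces of directions, again leaning on geodesic completeness to supply antipodes and to extend geodesics; the delicate point is that a thick building factor cannot be recut along a different join, which is precisely where the rigidity of buildings as CAT(1) spaces is needed. Granting canonicity of the factors, the sphere factor $\SS^\ell$ is unique, the partition of the $Y_i$ into building and non-building factors is forced, and the asserted decomposition is unique up to permutation of the $B_i$ and of the $Z_j$. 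I expect Stage 3 to be the main obstacle, and within it the local-to-global step — that a direction along which $Z$ splits locally as a join with a circle integrates to a genuine global spherical factor.
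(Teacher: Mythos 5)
You should first note that the paper does not prove this statement at all: it is quoted verbatim as Lytchak's corollary 1.2 from \cite{[Lytchak-join-decomposition]} and used as a black box (the authors also point out that a curvature-independent version follows from \cite{[Foertsch-Lytchak]} by coning). So there is no in-paper argument to match your sketch against; the only question is whether your sketch stands on its own as a proof of Lytchak's result, and it does not.

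The gap is concrete: your Stage 3, which you yourself flag as ``the crux,'' is exactly the content of the theorem and is not argued --- ``one argues via the fine geometry of suspensions and $2$-spheres \dots'' is a description of a hoped-for proof, not a proof, and the local-to-global step you single out (a direction along which $Z$ splits locally integrates to a global sphere factor) is precisely what requires the serious work in \cite{[Lytchak-join-decomposition]} and the de Rham-type uniqueness theorem of \cite{[Foertsch-Lytchak]}. Stage 1 has the same problem in milder form: the existence of a maximal round-sphere join factor is asserted via ``the join of all round circles through a fixed point whose join complement is all of $Z$,'' but you never show this set is a round sphere, that it is a join factor, or that the complementary factor is well defined; only the termination-by-dimension remark (using $\dim(A\ast B)=\dim A+\dim B+1$) is actually justified. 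Stage 2 is circular as written, since it appeals to ``Lytchak's recognition criteria'' --- i.e.\ to the paper whose theorem you are trying to reprove; the purely building-theoretic reduction (an irreducible non-sphere building is thick, after passing to the canonical thick structure) is fine, but it does not carry the metric uniqueness you need. In short, your outline correctly locates the difficulties, but every one of them is deferred rather than resolved, so the proposal is a plan for a proof, not a proof.
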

In order to utilize this result in the context of the rank rigidity conjectures one needs to find a way to get rid of the non-building factors. In this context, the main theorem of the same paper provides a way:
\begin{thm}[Lytchak, \cite{[Lytchak-join-decomposition]}, main theorem]\label{Lytchak building characterization} Let $Z$ be a finite-dimensional geodesically complete CAT(1) space. If $Z$ has a proper closed subset $A$, containing with each $x\in A$ all antipodes of $x$ -- that is, all points $z\in Z$ with $d(x,z)\geq\pi$ -- then $Z$ is either a spherical building or a spherical join.
\end{thm}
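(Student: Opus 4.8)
The plan is to use the join decomposition of Theorem~\ref{Lytchak} to reduce to the irreducible case, and then to establish the sharper fact that a finite-dimensional, geodesically complete, \emph{irreducible and non-building} CAT(1) space of positive dimension contains no proper, nonempty, closed antipodal subset at all --- so that the hypothesis of the theorem can hold only for a building or a join. Write $Z=\fat{S}^\ell\ast B_1\ast\cdots\ast B_k\ast Z_1\ast\cdots\ast Z_m$ as in Theorem~\ref{Lytchak}. If $\dim Z\leq 0$ the statement is trivial; if $\dim Z\geq 1$ and either $\ell\geq 0$ or $k+m\geq 2$, then $Z$ is a nontrivial spherical join and we are done; if $(\ell,k,m)=(-1,1,0)$, then $Z=B_1$ is a thick spherical building and we are done. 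The only remaining case is $(\ell,k,m)=(-1,0,1)$ with $\dim Z\geq 1$, i.e.\ $Z=Z_1$ irreducible and non-building, and the task becomes to show that any closed $A\subseteq Z$ with $\mathrm{Ant}(x):=\{z\in Z:\ d(x,z)\geq\pi\}\subseteq A$ for all $x\in A$ is empty or all of $Z$, contradicting the hypothesized $A$. (A caveat on the reduction: Theorem~\ref{Lytchak} is really proved in tandem with the present statement rather than strictly before it, so a fully self-contained treatment interleaves the two; granting it, the rest is the skeleton into which the genuinely hard work is inserted.)

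Since an irreducible geodesically complete CAT(1) space of positive dimension is connected, it suffices to show a nonempty closed antipodal $A$ is open. The key preliminary is that \emph{every point of $Z$ has an antipode}: given $p$, extend a short geodesic issuing from $p$ to a complete geodesic $\gamma$ with $\gamma(0)=p$; for $0<t<\pi$ a local geodesic of length $t$ in a CAT(1) space is globally minimizing, so $d(p,\gamma(t))=t$, and continuity of $d(p,\cdot)$ gives $d(p,\gamma(\pi))=\pi$. Hence $\mathrm{Ant}$ is everywhere defined, and iterating it shows $A$ contains, with each $x\in A$, the full \emph{antipodal closure} $\widehat{x}=\bigcup_{j\geq 0}\mathrm{Ant}^{\,j}(x)$, hence also $\overline{\widehat{x}}$ since $A$ is closed. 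The problem thus reduces to the purely geometric claim that in an irreducible non-building $Z$ as above, $\overline{\widehat{x}}=Z$ for every $x$. Both hypotheses are essential: in $\fat{S}^0\ast W$ the two poles form a proper closed antipodal pair, and in a thick spherical building the closure of the singular locus is a proper closed antipodal set, so the claim fails exactly for joins and for buildings.

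This last claim is the technical heart, and it resists elementary comparison geometry: the CAT(1) inequality gives no control at distance $\geq\pi$, and antipodes need not be unique, so ``move to an antipode and back'' must be carried out through the local structure theory of geodesically complete finite-dimensional CAT(1) spaces. The route I would take: (i) analyze spaces of directions and their join decompositions to locate a dense open ``generic'' locus of points with strong local homogeneity, and a complementary lower-dimensional ``non-generic'' locus; (ii) show that a closed antipodal $A$ meeting the generic locus must, via local homogeneity together with the antipodal-closure principle above, contain all of it and hence equal $Z$, a contradiction; (iii) show that a closed antipodal $A$ contained in the non-generic locus forces that locus to organize $Z$ into either an apartment system --- making $Z$ a spherical building --- or a nontrivial metric join, by invoking rigidity of links in the spirit of Charney--Lytchak, in either case contradicting irreducibility or the non-building hypothesis. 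Step~(iii) --- extracting an apartment system (or a splitting) from the combinatorics of the antipodality relation on the non-generic locus --- is where essentially all of the difficulty lies, and where I would expect the bulk of a complete proof to reside.
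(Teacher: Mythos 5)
This statement is not proved in the paper at all: it is quoted verbatim as the main theorem of Lytchak's paper \cite{[Lytchak-join-decomposition]}, so there is no internal proof to compare against, and your proposal has to be judged as a standalone proof of Lytchak's result. As such it has a genuine gap, and you in effect acknowledge it: after the (easy and correct) observations that geodesic completeness furnishes an antipode for every point and that an antipodal set contains the closure of the iterated antipodal closure of each of its points, the entire content of the theorem is concentrated in your claim that an irreducible, non-building, geodesically complete finite-dimensional CAT(1) space of positive dimension admits no proper nonempty closed antipodal subset. Your steps (i)--(iii) are a research plan, not an argument: nothing is actually extracted from the local structure theory, no apartment system or splitting is constructed, and ``in the spirit of Charney--Lytchak'' is precisely the hard analysis that constitutes Lytchak's proof. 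So the proposal reduces the theorem to an equivalent restatement of it.

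There is also a logical problem with the reduction itself, which you flag but do not resolve: Theorem \ref{Lytchak} is Corollary~1.2 of the very paper whose main theorem you are trying to prove, and in Lytchak's development the join decomposition is deduced from the building characterization, not the other way around. Using \ref{Lytchak} to dispose of all cases except the irreducible non-building one is therefore circular unless you supply an independent proof of the decomposition (for instance via the curvature-independent splitting results of \cite{[Foertsch-Lytchak]} combined with an argument that thick irreducible building factors can be recognized without the antipodal criterion), and no such argument is given. The peripheral remarks are fine --- antipodes of singular points in a thick building are singular, so the singular locus is indeed a proper closed antipodal set, and the two poles of a join $\fat{S}^0\ast W$ give the other standard example --- but these only confirm that the dichotomy is sharp; they do not advance the proof.
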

We remark that it is not yet  known what the conditions on $X$ and/or $G$ should be in order for $Z=\bdTits X$ to be geodesically complete. However, since $X$ admits a co-compact isometric action, it is coarsely geodesically-complete (Geoghegan-Ontaneda \cite{[Geoghegan-Ontaneda-almost-geodesically-complete]}): there is a constant $R_c>0$ such that any $x,y\in X$ there is a geodesic ray emanating from $x$ and passing through the ball of radius $R_c$ about $y$.

The following result  gives a less powerful decomposition, but one that is valid without requiring geodesic completeness.  It is stated explicitly in \cite{[Swenson09]} but a curvature independent version can be easily obtained from the main result of \cite{[Foertsch-Lytchak]} using coning, and 
this  also gives a curvature independent  version of Theorem \ref{Lytchak}.
\begin{thm}[\cite{[Foertsch-Lytchak]}, \cite{[Swenson09]} theorem 14]\label{join decomposition} If $(Z,d)$ is a finite dimensional complete CAT(1) space then there is a unique decomposition of $Z$ as a metric spherical join $Z=\SS(Z)\ast E'(Z)$, where $\SS(Z)$ is a round sphere and $E'(Z)$ is a complete $\pi$-convex subspace of $Z$ which cannot be decomposed in this way. Moreover, $\SS(Z)$ coincides with the set of suspension points of $Z$. 
\end{thm}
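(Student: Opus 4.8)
The plan is to obtain the decomposition of $Z$ as the ``projectivization'' of the metric de~Rham (product) decomposition of the Euclidean cone over $Z$, and then to match the resulting round-sphere factor with the set of suspension points. Write $\hat Z$ for the Euclidean $1$-cone $\mathrm{Cone}_1(Z)$; since $Z$ is complete, CAT(1) and finite dimensional, $\hat Z$ is a complete CAT(0) space with $\dim\hat Z=\dim Z+1<\infty$, and the cone point $o$ has space of directions $\Sigma_o\hat Z=Z$. The dictionary I will use is standard: a metric spherical join $Z=A\ast B$ corresponds exactly to a metric product $\hat Z=\mathrm{Cone}_1(A)\times\mathrm{Cone}_1(B)$ whose cone point is the pair $(o_A,o_B)$, and a round-sphere join factor $\SS^{k-1}\subseteq Z$ (with the conventions $\SS^{-1}=\varnothing$, $\mathrm{Cone}_1(\varnothing)=\{\text{pt}\}$) corresponds to a Euclidean $\RR^{k}$-factor of $\hat Z$ through $o$.

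First I would invoke the metric de~Rham theorem, in the form of the main result of \cite{[Foertsch-Lytchak]} valid for complete geodesic spaces, made into a \emph{finite} product by $\dim\hat Z<\infty$: write $\hat Z=\RR^{n}\times Y$, where $Y$ is a complete CAT(0) space admitting no Euclidean ($\RR$-) de~Rham factor and $n\geq 0$ is maximal, both $n$ and $Y$ being unique up to isometry, and the two foliations (into ``$\RR^{n}$-fibers'' and into ``$Y$-fibers'') canonically determined. Then comes the compatibility step, which I regard as the crux. The cone homothety $h_\lambda\colon(t,z)\mapsto(\lambda t,z)$ of $\hat Z$ is a \emph{similarity} of ratio $\lambda$ fixing $o$; viewing it as an isometry $\hat Z\to\lambda\hat Z$ and using that rescaling neither creates nor destroys de~Rham factors, uniqueness forces $h_\lambda$ to preserve both de~Rham foliations. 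Since $h_\lambda$ fixes $o$, it preserves the $Y$-fiber $F$ through $o$ and the $\RR^{n}$-fiber through $o$ setwise, so $h_\lambda|_F$ is a similarity of $F\cong Y$ of ratio $\lambda$ fixing a point. The continuous semigroup $\{h_\lambda|_F\}_{\lambda>0}$ of homotheties centered at that point then exhibits $F$ itself as a Euclidean cone $\mathrm{Cone}_1(E')$ over the complete CAT(1) space $E':=\Sigma_{o}F$. Taking spaces of directions at $o$ now gives $Z=\Sigma_o\hat Z=\Sigma_o(\RR^{n}\times Y)=\SS^{\,n-1}\ast E'$, and $E'$ inherits completeness and $\pi$-convexity from the complete convex subspace $F\subseteq\hat Z$.

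It remains to identify $\SS(Z):=\SS^{\,n-1}$ with the suspension points and to establish uniqueness. A point $p\in Z$ is a suspension point precisely when there is an antipode $\bar p$ with $d(p,\cdot)+d(\cdot,\bar p)\equiv\pi$, which (using completeness and the CAT(1) condition, but not geodesic completeness) is equivalent to the ray $[o,p)$ extending to a complete line of $\hat Z$ that splits off an $\RR$-factor; by maximality of $n$ these are exactly the points of $\SS^{\,n-1}$, and the same criterion together with ``$Y$ has no $\RR$-factor'' shows $E'$ has no suspension point, hence admits no further round-sphere join splitting. For uniqueness, any decomposition $Z=\SS^{k}\ast W$ cones to $\hat Z=\RR^{\,k+1}\times\mathrm{Cone}_1(W)$; the uniqueness clause of \cite{[Foertsch-Lytchak]} forces $\RR^{\,k+1}$ to be a sub-factor of $\RR^{n}$, so $k+1\le n$, and if $W$ is itself indecomposable in this sense then $\mathrm{Cone}_1(W)$ has no $\RR$-factor, whence $k+1=n$, $\mathrm{Cone}_1(W)\cong Y$, and $W\cong E'$.

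I expect the compatibility step --- showing that the de~Rham Euclidean factor of $\hat Z$ can be taken through the cone point, equivalently that the complementary de~Rham factor $Y$ is again a Euclidean cone --- to be the main obstacle, since it is what lets the cone-level product decomposition descend to a genuine join decomposition of $Z$; the argument above reduces it to the bijective correspondence between homotheties of $\hat Z$ and similarities preserving the (unique) de~Rham foliations. A secondary technical point is the faithful detection of a suspension point $p$ of $Z$ by an $\RR$-factor of $\hat Z$ in the direction of $p$, which rests on the equivalence between a splitting $Z=\SS^{0}\ast Z_0$ and the existence of a ``pole function'' $d(p,\cdot)+d(\cdot,\bar p)\equiv\pi$; everything else is bookkeeping with cones, spherical joins, and the cited uniqueness theorem.
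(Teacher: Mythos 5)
Your proposal is correct, and it follows essentially the route the paper itself takes for Theorem \ref{join decomposition}: the paper does not prove the statement but cites \cite{[Swenson09]} (Theorem 14) and explicitly remarks that it "can be easily obtained from the main result of \cite{[Foertsch-Lytchak]} using coning," which is precisely your argument — cone over $Z$, apply the de~Rham splitting, use the cone homotheties together with the uniqueness (and isometry-splitting) clause to center the Euclidean factor at the cone point, and read off the join and the suspension points at the link. The two technical points you flag (radial invariance of the non-Euclidean fiber through the cone point, and the equivalence between a pole function and an $\SS^0$-join splitting, the latter being Lytchak's Lemma 4.1 used elsewhere in the paper) are exactly the right ones and are handled correctly.
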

Following \cite{[Swenson09]}, we define a suspension point of $Z$ as follows:
\begin{defn}[suspension points] Let $(Z,d)$ be a complete CAT(1) space. We say that a point $a\in Z$ is a suspension point, if there is a point $b\in Z$ with $d(a,b)=\pi$ such that $Z$ equals the union of all geodesics joining $a$ with $b$.
\end{defn}
This result plays a fundamental role in our approach to the study of $\bd X$.

\subsection{Dynamics and $G$-ultra-limits} 
We introduce a new (in the current context) technical tool for our analysis. The action $G\actson \bdCone X$ extends to an action of the Stone-\v{C}ech compactification $\beta G$ (of $G$ as a countable discrete space) on $\bdCone X$. Since $G$ acts on $\bdCone X$ discretely, the action of $\beta G$ on a point $p\in\bdCone X$ can be described as an ultra-limit of the form 
\begin{equation}
	\ultra{\omega}x=\ulim{\omega}{g}g\cdot p\,,
\end{equation}
where $\omega$ is an ultra-filter on $G$, and the orbit map $\omega\mapsto\ultra{\omega}p$ is continuous for all $p$. 

Note that in its role as an operator on $\bdCone X$ a non-principal ultra-filter $\omega$ is not necessarily continuous with respect to the cone topology. A radical example occurs when $G$ is one-ended non-elementary word-hyperbolic: since $G$ acts on $\bdCone X$ as a discrete convergence group, every non-principal $\omega\in\beta G$ has a pair of points $n,p\in\bd X$ such that $\ultra{\omega}z=p$ for all $z\neq n$; since $G$ is non-elementary, $\ultra{\omega}n\neq p$ for many choices of $\omega$.

In the more general case the situation is somewhat different. A theorem of Kleiner \cite{[Kleiner-geom-dim]} guarantees a flat in $X$, whose boundary is a round sphere of dimension equal to the geometric dimension of $\bdTits X$ (and hence not exceeding the covering dimension of $\bdCone X$). What happens to such spheres under the operators $\ultra{\omega}$?

Since the Tits metric is lower semi-continuous with respect to the cone topology, every $\omega\in\beta G$ acts on $\bdTits X$ as a $1$-Lipschitz operator. Thus, a certain degree of continuity is retained. In fact, some of these operators preserve the geometry of some such spheres:\\

\hspace{-.15in}{\bf Theorem A: }{\it %Suppose $G$ is a group acting geometrically on a CAT(0) space $X$ and let $d$ denote the geometric dimension of $\bdTits X$. Then there exists an ultra-filter $\omega\in\beta G$ and there exists a round sphere $\SS$ in $\bdTits X$ of dimension $d$ such that $\ultra{\omega}$ maps $\bdTits X$ surjectively onto $\SS$.
Suppose $G$ acts geometrically on a CAT(0) space $X$ and let $d$ denote the geometric dimension of $\bdTits X$. Then for every $(d+1)$-flat $F_0\subseteq X$ there exist $\omega\in\beta G$ and a $(d+1)$-flat $F\subseteq X$ such that $\ultra{\omega}$ maps all of $\bd X$ to $\SS=\bd F$, with $\bd F_0$ mapped isometrically onto $\SS$.}\\

The immediate conclusion is that $\SS$ intersects every minimal closed $G$-invariant subset (or simply {\it minset}) of $\bdCone X$, which explains the title of this work. Theorem D (below) gives a much sharper result in this sense, producing particularly natural candidates for a transversal of the family of minsets.

Key ingredients in the proof of this result are $\pi$-convegence (see \cite{[Papasoglu-Swenson-JSJ]} or theorems \ref{original pi-convergence},\ref{pi-convergence} below), the above mentioned join decomposition theorem by Swenson, and a notion of conicity for ultra-filters (recall the notions of conical limit points {\it vs.} parabolic limit points in, say, Kleinian groups).

Theorem A demonstrates the technical power of the approach using ultra-limits: since $\bdTits X$ is rarely compact (or even locally separable), the common practice of considering sequences of elements $g_n\in G$ and passing to sub-sequences to obtain limit points of the form $\lim g_n\cdot z$ defined over Tits-compact subsets of $\bd X$ does not suffice for understanding the dynamics of the action globally. In contrast, the $1$-Lipschitz operators $\ultra{\omega}:\bd X\to\bd X$ allow one to study the geometry of $\bdTits X$ as a whole.

The use of ultra-filters and ultra-limits is not new to the field of Dynamics: it was introduced by Hindman to deal with various generalizations of Van-der-Waerden's theorem on arithmetic progressions (see survey in \cite{[Blass]}), and spawned many applications to the dynamics of commutative groups. More generally, dynamicists discussed various properties of the image of $\beta G$ in $Z^Z$ (known as Ellis' {\it enveloping semi-group} of the action $G\actson Z$) for an arbitrary (compact) dynamical system $(Z,G)$. Chapter 1 of \cite{[Glasner-Joinings]} surveys this approach to topological dynamics.

The novelty of our approach in the context of general CAT(0) groups lies in it exposing a new connection between the cone and Tits topologies on $\bd X$: while the cone topology is compact and guarantees the extension, it is the metric properties of the extension with respect to the Tits topology that prove the most useful. At the same time, $\bdTits X$ is usually not compact, which renders classical results on dynamical systems largely inapplicable to the study of the action $\beta G\actson\bdTits X$, with a lot of ground left to cover using the CAT(0) geometry of $X$ and CAT(1) geometry of $\bdTits X$, as we shall see below.

\subsection{Incompressible sets and rigidity} Recall an important notion from dynamics: two points $p,q$ in a (compact) dynamical system $(Z,G)$ are said to be {\it proximal}, if the orbit of the pair $(p,q)$ under the diagonal action $G\actson Z\times Z$ accumulates at the diagonal.

Equivalently, $p$ and $q$ are proximal if there is $\omega\in\beta G$ with $\ultra{\omega}p=\ultra{\omega}q$.

For example, any two points on the boundary of a rank one group are proximal, while no two points on the boundary of $\ZZ^n$ are.

On a boundary $\bdCone X$ of a CAT(0) group $G$ we may consider the following quantified version of proximality: for any two points $p,q\in\bd X$ and any $\omega\in\beta G$ one has
\begin{equation}
	\Tits{\ultra{\omega}p}{\ultra{\omega}q}\leq\ulim{\omega}{g}\Tits{gp}{gq}=\Tits{p}{q}\,,
\end{equation}
by lower semi-continuity of the Tits metric.
\begin{defn}[Compressibility] We say that the points $p,q\in\bd X$ form a {\it compressible pair}, if \[\Tits{\ultra{\omega}p}{\ultra{\omega}q}<\Tits{p}{q}\] holds for some choice of $\omega\in\beta G$. A subset $A\subset\bd X$ is {\it incompressible}, if no two points of $A$ form a compressible pair.
\end{defn} 
\begin{remark} A proximal pair is necessarily compressible. For a slightly less trivial example,  $\ZZ^n\actson\EE^n$ geometrically and it is easy to see that the whole ideal sphere $\bdTits\EE^n$ is incompressible.
\end{remark}
We are now able to state what is perhaps the most impressive application of theorem A in this work:\\

\hspace{-.15in}{\bf Theorem B: }{\it Suppose $G$ is a higher rank group acting geometrically on a CAT(0) space $X$. Let $\II$  denote the set of non-degenerate maximal incompressible subsets of $\bd X$. The following are equivalent:
\begin{enumerate}
	\item $G$ stabilizes (not necessarily pointwise!) an element of $\II$ ;
	\item $G$ contains a free Abelian subgroup $A$ of finite index, and $X$ contains a coarsely-dense $A$-invariant flat $F$ with compact $A$-quotient.
\end{enumerate}}

Every $\ultra{\omega}$ restricts to an isometry on an incompressible set. Theorem A then implies:\\

\hspace{-.15in}{\bf Theorem C: }{\it Suppose $G$ is a higher rank group acting geometrically on a CAT(0) space $X$ and let $d$ be the geometric dimension of $\bdTits X$. Let $\II$ denote the set of non-degenerate maximal incompressible subsets of $\bd X$. Every $A\in\II$ is a Tits-compact, connected, $\pi$-convex subset of $\bdTits X$ isometric to a compact convex spherical (possibly infinite sided) polytope.}\\

Studying the consequences of this theorem has some bearing on the rank rigidity conjectures. To start, it should be (almost) obvious that $G$ having rank one implies $\II$ is empty. One should wonder whether the converse is true.

Next, it turns out (corollary \ref{approach to closing lemma}) the closing lemma would follow if one could prove $\II$ covers $\bd X$. This motivates a deeper study of the structure of $\II$. In fact, one can focus on special elements of $\II$, producing a much sharper form of theorems A and C:\\

\hspace{-.15in}{\bf Theorem D: }{\it Suppose $G$ is a higher rank group acting geometrically on a CAT(0) space $X$ and let $d$ be the geometric dimension of $\bdTits X$. Let $d'$ denote the maximum dimension of an element of $\II$ and let $\IImax$ denote the set of all $A\in\II$ of dimension $d'$ and maximal possible ($d'$-dimensional) volume. 
Then there exist $A_0\in\IImax$ and an ultra-filter $\nu\in\beta G$ such that $\ultra{\nu}(\bd X)=A_0$.}\\

In particular, $A_0$ intersects every minset of $G$ in $\bdCone X$, and every element of $\II$ is isometric to a sub-polytope of $A_0$. As a result, all elements of $\IImax$ are isometric to each other. With theorem A in hand we may also assume $A_0$ is contained in a round sphere bounding a flat in $X$. Additional results lead us to believe that the union of $\IImax$ may, indeed, have the structure of a spherical building. Whether $\IImax$ (or even $\II$) covers $\bd X$ remains to be resolved.

At any rate, we believe our account provides sufficient evidence that the line of study proposed in this work -- perhaps applied to particular large known families of CAT(0) groups -- provides a feasible attack on the rank rigidity conjectures for CAT(0) groups, regardless of these conjectures turning out to be true or false.

\subsection{Overview of the paper} In section \ref{section:ultra-limits} we recall the construction of the extension of a discrete action $G\actson Z$ of a group on a compactum to an action of $\beta G$ on $Z$ and discuss the general properties we require later on. The exposition roughly follows that of \cite{[Blass]}, extending his discussion to the non-commutative case. This section is presented solely in the interest of completeness of the exposition, its results being already known to Dynamicists.

Section \ref{section:dynamics} discusses the general features of $\beta G\actson\bdCone X$ and how these interact with the structure of $\bdTits X$ when $G$ is a group acting geometrically on the CAT(0) space $X$.

Section \ref{section:geometry of bdTits} proves the main results and section \ref{section:discussion} discusses the relation to rank rigidity.

\subsection{Acknowledgements}
The second author wishes to thank Werner Ballmann for many helpful conversations. The first author is grateful to Boris Okun and to Uri Bader for interesting discussions and encouragement. Also, special thanks to the referee and to Aur\'elien Bosche for valuable remarks about earlier versions of this paper.

\section{Ultra-limits.}\label{section:ultra-limits} Throughout this section, $G$ is a discrete topological group acting on a compact Hausdorff space $Z$. In this section we generalize the ideas described in Blass \cite{[Blass]} in order to associate with the action $G\actson Z$ a family of ultra-limit operators $\ultra{\omega}:Z\to Z$ sending points of $Z$ to accumulation points of their orbits, as those are picked by the ultrafilter $\omega$ on $G$.

Beyond defining the $G$-ultralimits, the most important goal of this section is to investigate the extent to which the statement ``the limit of limits is itself a limit'' holds true in this context. This is where mimicking Blass' description of ultralimits over the semi-group $\NN$ turned out to be the most useful.

Recall that an {\it ultrafilter on $G$} is a finitely additive function $\omega:\mathbf{2}^G\to\mathbf{2}$ such that $\omega G=1$. An ultrafilter $\omega$ on $G$ is said to be non-principal, if $\omega F=0$ whenever $F$ is a finite subset of $G$. Denote the set of ultrafilters on $G$ by $\UF$.

Observe that a principal ultrafilter on $G$ necessarily has the form $\omega=\delta_g$ for some $g\in G$, where $\delta_g(F)=1$ iff $g\in F$. Thus, the principal ultrafilters on $G$ are in 1:1 correspondence with $G$.

In fact, $\omega:\mathbf{2}^G\to\mathbf{2}$ is an ultrafilter iff $\omega\inv(1)$ is an ultrafilter on $\mathbf{2}^G$ in the usual set-theoretic sense, with the principal ultrafilters corresponding to filters generated by the singletons of $G$. This alternative point of view implies also that every family $\hsm{F}$ of subsets of $G$ satisfying the finite intersection property (FIP) has an ultrafilter $\omega$ satisfying $\omega F=1$ for all $F\in\hsm{F}$ (the set-theoretic version of this statement is that every filter-base is contained in an ultrafilter).

For any $v=(v_g)_{g\in G}\in Z^G$ and subset $Y$ of $Z$, denote by $[v,Y]$ the set of all $g\in G$ for which $v(g)\in Y$. For the special case when $v_g=g\cdot z$ we will abuse notation, writing $[z,Y]$ to denote the set $[v,Y]$ wherever there is no possibility of ambiguity or confusion.

\begin{defn}[$\omega$-limit] For any $\omega\in\UF$, we say that a point $z_0\in Z$ is an $\omega$-limit of the vector $v\in Z^G$, if every neighbourhood $U$ of $z_0$ satisfies $\omega[v,U]=1$.
\end{defn}
Since $Z$ is Hausdorff, every $v\in Z^G$ has at most one $\omega$-limit. Now suppose some $v\in Z^G$ did not have an $\omega$-limit. Then every point $y\in Z$ has an open neighbourhood $U_y$ with $\omega[v,U_y]=0$. Equivalently, $\omega[v,F_y]=1$ where $F_y=Z\minus U_y$. Now observe that the family $\hsm{F}=\left\{F_y\right\}_{y\in Z}$ is a family of closed subsets of $Z$ having FIP. By compactness of $Z$, there is a point $z\in Z$ common to all elements of $\hsm{F}$. But then $z\in F_z$ -- contradiction. We have proved:
\begin{prop}[existence of $\omega$-limits] For every $v\in Z^G$ and $\omega\in\UF$, the $\omega$-limit of $v$ under $G$ is well-defined. We henceforth denote the $\omega$-limit of $v$ by $\ulim{\omega}{g}v_g$.
\end{prop}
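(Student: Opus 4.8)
The plan is to prove uniqueness and existence separately, each reducing to a standard fact about ultrafilters on compact Hausdorff spaces. Throughout I will use the elementary identity $[v,A\cap B]=[v,A]\cap[v,B]$, immediate from the definition of $[v,\cdot]$, together with the two defining features of an ultrafilter: it never assigns $1$ to $\emptyset$, and for every $A\subseteq G$ exactly one of $A$, $G\minus A$ has $\omega$-value $1$.

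For uniqueness, suppose $z_0\neq z_1$ were both $\omega$-limits of $v$. Since $Z$ is Hausdorff, choose disjoint open neighbourhoods $U_0\ni z_0$ and $U_1\ni z_1$. By definition $\omega[v,U_0]=\omega[v,U_1]=1$, whence (an ultrafilter being closed under finite intersections of large sets) $\omega\bigl([v,U_0]\cap[v,U_1]\bigr)=1$. But $[v,U_0]\cap[v,U_1]=[v,U_0\cap U_1]=[v,\emptyset]=\emptyset$, so $\omega\emptyset=1$ --- a contradiction. Hence $v$ has at most one $\omega$-limit.

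For existence I would argue by contradiction: assume $v$ has no $\omega$-limit. Then for each $y\in Z$ there is an open neighbourhood $U_y$ of $y$ with $\omega[v,U_y]=0$, so that $\omega[v,F_y]=1$ for the closed complement $F_y:=Z\minus U_y$. The family $\{[v,F_y]\}_{y\in Z}$ has the finite intersection property in $G$, since $[v,F_{y_1}]\cap\cdots\cap[v,F_{y_n}]=[v,F_{y_1}\cap\cdots\cap F_{y_n}]$ has $\omega$-value $1$ and is therefore nonempty; but a point $g$ in this intersection witnesses $v_g\in F_{y_1}\cap\cdots\cap F_{y_n}$, so the family of closed sets $\{F_y\}_{y\in Z}$ itself has the finite intersection property in $Z$. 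Compactness of $Z$ then produces $z\in\bigcap_{y\in Z}F_y$; in particular $z\in F_z=Z\minus U_z$, contradicting $z\in U_z$. This establishes existence, and the remaining clause of the proposition --- that the resulting unique point may be denoted $\ulim{\omega}{g}v_g$ --- is then just a definition.

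I do not expect a genuine obstacle here: the whole statement is the classical assertion that a space is compact Hausdorff iff every ultrafilter on it converges to a unique point, applied to the pushforward ultrafilter $Y\mapsto\omega[v,Y]$ on $Z$, and the argument above merely unwinds that fact in the present notation. The only points demanding care are the bookkeeping of the correspondence between $\omega$-large subsets of $G$ and the neighbourhoods in $Z$ that they are pulled back from, and the transfer of the finite intersection property from $G$ to $Z$ along the map $g\mapsto v_g$; both are routine.
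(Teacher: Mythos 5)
Your argument is correct and is essentially the paper's own proof: the same contradiction via the closed complements $F_y=Z\minus U_y$, the finite intersection property of $\{F_y\}$ transferred from the $\omega$-large sets $[v,F_y]$, and compactness yielding the contradictory point $z\in F_z$, with uniqueness from Hausdorffness. You merely spell out the uniqueness step and the FIP transfer in more detail than the paper does.
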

\begin{remark} When $\omega$ is principal, $\omega=\delta_a$ for some $a\in G$, and then, clearly, $\ulim{\omega}{g}v_g=v_a$, as expected.
\end{remark}
A special case to consider is that of the $\omega$ limit of the orbit of a point $z\in Z$. It will be convenient to denote the action of $G$ on $Z$ by $T$ (translation), with $T^g:z\mapsto g\cdot z$ for every $g\in G$. Every $T^g$ is a homeomorphism of $Z$ onto $Z$, as it were.
\begin{defn}[$\omega$-limit along an orbit] For $z\in Z$ and $\omega\in\UF$, we define
\begin{displaymath}
	\ultra{\omega}z=\ulim{\omega}{g}T^g(z)\,.
\end{displaymath}
\end{defn}
\begin{lemma}\label{ultralimits commute with continuous maps} Suppose $Z$ and $Y$ are compact Hausdorff spaces. Then, for every $\omega\in\UF$, every $v\in Z^G$ and every continuous map $f:Z\to Y$ one has the equality
\begin{displaymath} 
	f\left(\ulim{\omega}{g}v_g\right)=\ulim{\omega}{g}f(v_g)\,.
\end{displaymath}
\end{lemma}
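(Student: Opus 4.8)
The plan is to verify directly that $f(z_0)$, where $z_0:=\ulim{\omega}{g}v_g$, satisfies the defining property of the $\omega$-limit of the vector $w\in Y^G$ given by $w_g:=f(v_g)$; uniqueness of $\omega$-limits in a Hausdorff space then yields the asserted identity. So the first move is to invoke the existence result for $\omega$-limits twice: since $Z$ is compact Hausdorff, $z_0=\ulim{\omega}{g}v_g$ is well-defined, and since $Y$ is compact Hausdorff, $\ulim{\omega}{g}w_g$ is well-defined as well — call it $y_0$. It remains to prove $y_0=f(z_0)$.

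Next I would fix an arbitrary neighbourhood $V$ of $f(z_0)$ in $Y$. By continuity of $f$, the set $f\inv(V)$ is a neighbourhood of $z_0$ in $Z$, so the fact that $z_0$ is the $\omega$-limit of $v$ gives $\omega[v,f\inv(V)]=1$. The key (and only) computation is the set-theoretic identity $[v,f\inv(V)]=\{g\in G:v_g\in f\inv(V)\}=\{g\in G:f(v_g)\in V\}=[w,V]$, whence $\omega[w,V]=1$. Since $V$ was an arbitrary neighbourhood of $f(z_0)$, the point $f(z_0)$ meets the definition of an $\omega$-limit of $w$; by uniqueness of $\omega$-limits in the Hausdorff space $Y$ we conclude $f(z_0)=y_0=\ulim{\omega}{g}f(v_g)$.

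\textbf{On the main obstacle.} There is essentially no serious obstacle here: the lemma is a formal unwinding of the definitions. The two points requiring care are (i) that one must know \emph{a priori} that $\ulim{\omega}{g}f(v_g)$ exists — which is precisely why compactness (and Hausdorffness) of $Y$ is part of the hypothesis, as the preceding existence proposition then applies to $w\in Y^G$ — and (ii) that the argument must pull neighbourhoods back through $f$ via $[v,f\inv(V)]=[w,V]$, rather than attempt to push forward, since $f$ need not be open and images of neighbourhoods of $z_0$ need not be neighbourhoods of $f(z_0)$. Applied with $v_g=T^g(z)$ this specializes to the statement that every continuous map commutes with the operators $\ultra{\omega}$ along orbits, which is the form in which it will be used later.
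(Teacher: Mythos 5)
Your argument is correct and is essentially the paper's own proof: both pull a neighbourhood of $f(z_0)$ back through $f$ by continuity, note that the relevant set of group elements has full $\omega$-measure (your identity $[v,f\inv(V)]=[w,V]$ versus the paper's inclusion $[v,O]\subset[f\circ v,U]$), and conclude by the defining property and uniqueness of $\omega$-limits. No gaps; your extra remarks on existence of the limit in $Y$ and on pulling back rather than pushing forward are consistent with, and implicit in, the paper's argument.
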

\begin{proof} Let $z=\ulim{\omega}{g}v_g$ and $y=f(z)$, let $U$ be a neighbourhood of $y$ in $Y$ and let $O$ be a neighbourhood of $z$ satisfying $f(O)\subset U$. By the choice of $z$ we have $\omega[v,O]=1$. Since $[v,O]\subset[f\circ v,U]$ we have $\omega[f\circ v,U]=1$ as well. Since $U$ is arbitrary, this shows $y$ is the $\omega$-limit of $f\circ v$, as desired.
\end{proof}

FROM NOW ON ASSUME $G$ IS DISCRETE.\\

For our main application this assumption holds: we will work with $Z=\hat X=X\cup\bdCone X$, and $G$ will have a non-empty regular set in $\hat X$ ($X$ is contained in the regular set of $G$, by the properness of $G\actson X$). 

The important consequence of the discreteness assumption is that the map $G\to\UF$ defined by $g\mapsto\delta_g$ is a Stone-\v{C}ech compactification of $G$, provided $\UF$ is taken with the (trace of the) Tychonoff topology.

Given $v\in Z^G$, we may consider $v$ as a continuous function from $G$ to $Z$. This means $v$ extends to a continuous function $\beta v:\UF\to Z$. 
\begin{cor}\label{formula for the ultralimit map} Suppose $v\in Z^G$. Then $v:G\to Z$ has a unique continuous extension $\beta v:\UF\to Z$, and $\beta v$ satisfies the formula
\begin{equation}
	(\beta v)(\omega)=\ulim{\omega}{g}v_g
\end{equation}
For all $\omega\in\UF$.
\end{cor}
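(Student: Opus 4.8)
The plan is to derive the statement from the universal property of $\UF$ recorded above together with the definition of the $\omega$-limit. Existence and uniqueness of $\beta v$ require essentially no work: since $G$ is discrete, $v:G\to Z$ is automatically continuous, $Z$ is compact Hausdorff, and $g\mapsto\delta_g$ realizes $\UF$ as the Stone--\v{C}ech compactification of $G$; hence $v$ extends to a continuous map $\beta v:\UF\to Z$ (as already noted before the corollary), and this extension is unique because $G$ is dense in $\UF$ and $Z$ is Hausdorff. All that remains is to identify the value $\beta v(\omega)$ with $\ulim{\omega}{g}v_g$ for every $\omega$.

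For that I would introduce the candidate map $\Phi:\UF\to Z$, $\Phi(\omega)=\ulim{\omega}{g}v_g$, which is everywhere defined by the existence proposition for $\omega$-limits. By the remark on principal ultrafilters, $\Phi(\delta_a)=v_a$ for all $a\in G$, so $\Phi$ restricts to $v$ on $G$. In view of the uniqueness just invoked, it then suffices to show that $\Phi$ is continuous; once that is done, $\Phi=\beta v$ and the formula follows. To check continuity at an arbitrary $\omega_0\in\UF$, put $z_0=\Phi(\omega_0)$ and let $U$ be an open neighbourhood of $z_0$ in $Z$. Using regularity of the compact Hausdorff space $Z$, choose an open $U'$ with $z_0\in U'\subseteq\cl{U'}\subseteq U$. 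By definition of the $\omega_0$-limit, $\omega_0[v,U']=1$, so $A:=[v,U']$ determines a basic clopen neighbourhood $\hat A=\{\omega\in\UF:\omega A=1\}$ of $\omega_0$. For any $\omega\in\hat A$, monotonicity of $\omega$ gives $\omega[v,\cl{U'}]=1$; and since $\cl{U'}$ is closed, $\Phi(\omega)\in\cl{U'}$, for otherwise $Z\minus\cl{U'}$ would be an open neighbourhood of $\Phi(\omega)$, forcing $\omega[v,Z\minus\cl{U'}]=1$ and hence $\omega[v,\cl{U'}]=0$, a contradiction. Thus $\Phi(\hat A)\subseteq\cl{U'}\subseteq U$, which establishes continuity.

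Honestly, none of this is a serious obstacle; the single point that demands care is the last implication — a closed set of full $\omega$-measure must contain the $\omega$-limit — which is exactly the contrapositive of the reasoning already used to prove uniqueness of $\omega$-limits. As an alternative that avoids the explicit continuity check altogether, one can apply Lemma~\ref{ultralimits commute with continuous maps} to the continuous map $f=\beta v:\UF\to Z$ and to the vector $w=(\delta_g)_{g\in G}\in(\UF)^G$. Since any two ultrafilters agreeing on all subsets of $G$ coincide, one verifies directly (testing on basic clopen neighbourhoods) that $\ulim{\omega}{g}\delta_g=\omega$ in $\UF$, and the lemma then yields $\beta v(\omega)=\beta v\big(\ulim{\omega}{g}\delta_g\big)=\ulim{\omega}{g}\beta v(\delta_g)=\ulim{\omega}{g}v_g$, which is the asserted formula.
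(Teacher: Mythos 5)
Your proposal is correct, and in fact it contains two arguments. Your primary route is genuinely different from the paper's: you prove directly that $\Phi(\omega)=\ulim{\omega}{g}v_g$ is continuous on $\UF$ (via regularity of $Z$, the basic clopen set $\hat A$ determined by $A=[v,U']$, and the observation that a closed set of full $\omega$-measure must contain the $\omega$-limit) and then invoke uniqueness of continuous extensions off the dense subset $G$. The paper instead disposes of the formula in one line by applying Lemma~\ref{ultralimits commute with continuous maps} to the continuous map $\beta v:\UF\to Z$ and the vector $(\delta_g)_{g\in G}\in\UF^G$, using the identity $\ulim{\omega}{g}\delta_g=\omega$ --- which is exactly the alternative you sketch at the end, including the verification of that identity on basic clopen sets (a point the paper leaves implicit). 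The trade-off: your direct continuity check is more elementary and self-contained, making visible both the Stone topology on $\UF$ and the ``closed sets of full measure capture the limit'' principle, whereas the paper's (and your alternative) argument is shorter and reuses the already-proved lemma, at the cost of quietly relying on $\ulim{\omega}{g}\delta_g=\omega$. Either way the statement is fully established; there is no gap.
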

\begin{proof}
$\beta v$ is well-defined since $\UF$ is the Stone-\v{C}ech compactification of $G$ and since $G$ is discrete. The preceding lemma applied to $\beta v$ implies that
\begin{equation}\label{eqn:formula for the ultralimit map}
	(\beta v)(\omega)=(\beta v)\left(\ulim{\omega}{g}\delta_g\right)=\ulim{\omega}{g}v_g\,
\end{equation} 
for all $\omega\in\UF$.
\end{proof}
\begin{cor}[Change of Variables]\label{cor:change of variables} Let $v\in Z^G$, $\omega\in\beta G$ and $f:G\to G$ be any function. Then
\begin{displaymath}
	\ulim{\omega}{g}v_{f(g)}=\ulim{(\beta f)\omega}{g}v_g
\end{displaymath}
\end{cor}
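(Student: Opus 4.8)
The plan is to reduce everything to Corollary \ref{formula for the ultralimit map} and Lemma \ref{ultralimits commute with continuous maps} by viewing the relevant objects as continuous maps out of $\UF$ and exploiting the universal property of the Stone-\v{C}ech compactification. First I would introduce the auxiliary vector $w\in Z^G$ defined by $w_g=v_{f(g)}$, i.e.\ $w=v\circ f$ as a function $G\to Z$. By Corollary \ref{formula for the ultralimit map} applied to $w$, the left-hand side is
\begin{displaymath}
\ulim{\omega}{g}v_{f(g)}=\ulim{\omega}{g}w_g=(\beta w)(\omega)\,,
\end{displaymath}
so it suffices to identify $\beta w$ with $(\beta v)\circ(\beta f)$ and then evaluate at $\omega$, since $(\beta v)\big((\beta f)\omega\big)=\ulim{(\beta f)\omega}{g}v_g$ again by Corollary \ref{formula for the ultralimit map}.

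The heart of the argument is therefore the claim $\beta(v\circ f)=(\beta v)\circ(\beta f)$ as maps $\UF\to Z$. Both sides are continuous: $\beta f:\UF\to\UF$ is the Stone-\v{C}ech extension of $f:G\to\UF$ (composing $f$ with the canonical inclusion $G\hookrightarrow\UF$, using that $G$ is discrete), $\beta v:\UF\to Z$ is continuous by Corollary \ref{formula for the ultralimit map}, and $Z$ is Hausdorff. Two continuous maps from $\UF$ into a Hausdorff space that agree on the dense subset $G$ must agree everywhere; and on $G$ we have, for each $a\in G$, that $(\beta v)\big((\beta f)\delta_a\big)=(\beta v)(\delta_{f(a)})=v_{f(a)}=w_a=(\beta w)(\delta_a)$, using the naturality of the compactification (that $\beta f$ restricted to $G$ is just $f$, under the identifications $g\leftrightarrow\delta_g$). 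Hence the two maps coincide on all of $\UF$, and evaluating at $\omega$ gives the desired identity.

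I expect the only genuinely delicate point to be the bookkeeping around $\beta f$: one must be careful that $\beta f(\delta_a)=\delta_{f(a)}$, i.e.\ that the Stone-\v{C}ech extension of $f$ agrees with $f$ on the copy of $G$ inside $\UF$, and that density of $G$ in $\UF$ together with the Hausdorff property of $Z$ really does force equality of the two continuous maps. These are standard facts about $\UF$ (all of which are available once we assume $G$ discrete, as stated just before Corollary \ref{formula for the ultralimit map}), so the whole corollary is essentially a diagram chase; no CAT(0) geometry or properties of the specific action enter. An alternative, more hands-on route would bypass $\beta f$ entirely and argue directly with neighbourhoods: for a neighbourhood $U$ of $y:=\ulim{(\beta f)\omega}{g}v_g$ one has $(\beta f)\omega\in[v,U]^{-}$ in the appropriate sense, unwind what $\omega[\,\cdot\,]=1$ means for the preimage $f^{-1}[v,U]=[w,U]$, and conclude $\omega[w,U]=1$; but the compactification argument above is cleaner and I would present that one.
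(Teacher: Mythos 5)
Your proof is correct and follows essentially the same route as the paper: both reduce the identity to $\beta(v\circ f)=(\beta v)\circ(\beta f)$ together with Corollary \ref{formula for the ultralimit map}, the only difference being that the paper cites this as functoriality of $\beta$ while you verify it directly via agreement on the dense copy of $G$ and the Hausdorff property of $Z$. That verification is a fine (and standard) way to justify the functoriality step, so nothing is missing.
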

\begin{proof} Let $u=v\circ f$. Functoriality of $\beta$ implies $\beta u=(\beta v)\circ(\beta f)$. Then:
\begin{displaymath}
	\ulim{\omega}{g}v_{f(g)}=\ulim{\omega}{g}u_g
	=(\beta u)\omega
	=(\beta v\circ \beta f)\omega=(\beta v)\left((\beta f)\omega\right)
	=\ulim{(\beta f)\omega}{g}v_g\,,
\end{displaymath}
as desired.
\end{proof}
An additional aspect of corollary \ref{formula for the ultralimit map} is that, since $Z$ is compact Hausdorff, so is $Z^Z$ when endowed with the product topology. Thus, the representation map $T:G\to Homeo(Z)$, when viewed as a map into $Z^Z$ has a unique continuous extension $\beta T:\beta G\to Z^Z$ and the preceding corollary tells us that $\beta T(\omega)=\ultra{\omega}$ for all $\omega\in\beta G$. More generally, the continuity of the $\beta T$ is guaranteed for any topology on $Z^Z$ containing the product topology and for which $Z^Z$ remains compact. We have proved:
\begin{cor}\label{pointwise convergence of G-omegas} The family of operators $\left\{\ultra{\omega}\right\}_{\omega\in\beta G}$ is closed under pointwise limits.\qedhere
\end{cor}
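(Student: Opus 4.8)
The plan is to obtain the statement as an immediate consequence of Tychonoff's theorem together with the universal property of $\beta G$, following the sketch preceding it. First I would fix the product topology on $Z^Z$, which is precisely the topology of pointwise convergence of nets. Since $Z$ is compact Hausdorff, Tychonoff's theorem gives that $Z^Z$ is compact, and an arbitrary product of Hausdorff spaces is Hausdorff; hence $Z^Z$ is compact Hausdorff.

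Next I would make the identification $\beta T(\omega)=\ultra{\omega}$ precise. The representation $T:G\to Z^Z$, $g\mapsto T^g$, is a map out of the discrete space $G$ into the compact Hausdorff space $Z^Z$, so it has a unique continuous extension $\beta T:\beta G\to Z^Z$. To pin it down I would compose with the evaluation maps: for each $z\in Z$ the map $\mathrm{ev}_z:Z^Z\to Z$, $\phi\mapsto\phi(z)$, is continuous, and $\mathrm{ev}_z\circ T$ is the vector $v\in Z^G$ given by $v_g=T^g(z)$. Uniqueness of continuous extensions (equivalently, functoriality of $\beta$) together with corollary \ref{formula for the ultralimit map} then yields $(\beta T(\omega))(z)=(\beta v)(\omega)=\ulim{\omega}{g}T^g(z)=\ultra{\omega}z$ for every $z\in Z$, so indeed $\beta T(\omega)=\ultra{\omega}$ for all $\omega\in\beta G$.

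Finally, since $\beta G$ is compact and $\beta T$ is continuous, the image $\{\ultra{\omega}\}_{\omega\in\beta G}=\beta T(\beta G)$ is a compact, hence closed, subset of the Hausdorff space $Z^Z$. As closedness in the product topology is exactly closedness under pointwise limits of nets, this is the claim; the remark following corollary \ref{formula for the ultralimit map} shows the conclusion is unchanged for any finer topology on $Z^Z$ keeping it compact. I do not expect any serious obstacle here: the only subtlety worth noting is that $Z^Z$ need not be first countable, so ``pointwise limit'' must be read in the sense of nets (or filters) rather than sequences, and the identification $\beta T(\omega)=\ultra{\omega}$ should be verified coordinatewise as above rather than taken for granted.
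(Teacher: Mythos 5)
Your argument is correct and is essentially the paper's own: the paragraph preceding the corollary likewise notes that $Z^Z$ is compact Hausdorff in the product topology, extends $T$ to a continuous map $\beta T:\beta G\to Z^Z$ with $\beta T(\omega)=\ultra{\omega}$ via corollary \ref{formula for the ultralimit map}, and concludes closedness of the family from compactness of the image. Your coordinatewise verification of $\beta T(\omega)=\ultra{\omega}$ through evaluation maps and the remark about net (rather than sequential) limits simply make explicit what the paper leaves implicit.
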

We single out one more useful consequence of this last observation:
\begin{cor}\label{continuous orbit maps} Let $z\in Z$. Then the map $\omega\mapsto\ultra{\omega}z$ is a continuous map of $\beta G$ into $Z$.\qedhere
\end{cor}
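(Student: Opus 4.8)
The plan is to reduce the statement to Corollary \ref{formula for the ultralimit map} by feeding it the right vector. Fix $z\in Z$ and define $v\in Z^G$ by $v_g=T^g(z)=g\cdot z$. Then, directly from the definition of the $\omega$-limit along an orbit, $\ultra{\omega}z=\ulim{\omega}{g}T^g(z)=\ulim{\omega}{g}v_g$ for every $\omega\in\beta G$.

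Now I would simply invoke Corollary \ref{formula for the ultralimit map} for this particular $v$: since $G$ is discrete, $v:G\to Z$ has a unique continuous extension $\beta v:\beta G\to Z$, and that extension is given by $(\beta v)(\omega)=\ulim{\omega}{g}v_g$. Combining the two displayed equalities yields $\omega\mapsto\ultra{\omega}z=(\beta v)(\omega)$, which is a continuous map $\beta G\to Z$ — exactly the assertion.

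An equivalent route is to use Corollary \ref{pointwise convergence of G-omegas} together with an evaluation map: the extension $\beta T:\beta G\to Z^Z$, $\omega\mapsto\ultra{\omega}$, is continuous for the product topology on $Z^Z$, and the evaluation $\mathrm{ev}_z:Z^Z\to Z$, $F\mapsto F(z)$, is continuous for that topology; the composition $\mathrm{ev}_z\circ\beta T$ is the map in question. Either way the argument is immediate — there is no real obstacle, since all the substantive work (continuity of $\beta v$, equivalently of $\beta T$) was already carried out in the preceding corollaries. The one point worth flagging is that discreteness of $G$ is essential: it is precisely what makes $G\to\beta G$ a Stone–\v{C}ech compactification and hence supplies the continuous extension; without it the reduction fails.
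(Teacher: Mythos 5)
Your argument is correct and matches the paper's: the corollary is exactly the observation that $\omega\mapsto\ultra{\omega}z$ is the continuous extension $\beta v$ of the orbit vector $v_g=g\cdot z$ (equivalently, the evaluation at $z$ of the continuous map $\beta T:\beta G\to Z^Z$ with the product topology), which is precisely how the paper derives it from the preceding corollaries. Nothing is missing.
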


\subsection{Multiplication and Antipodes.}
The space $\UF$ comes equipped with additional structure induced from the algebraic structure on $G$.

Denote the regular actions of $G$ on itself by $\ell_g(a)=ga$ and $r_g(a)=ag$. $G$ then acts on $\mathbf{2}^G$ on the right (denote the action by $\lambda:g\mapsto \lambda_g$) through precomposition with $\ell$ and on the left through precomposition with $r$ (denote by $\rho:g\mapsto \rho_g$), where the elements of $\mathbf{2}^G$ are considered as function $G\to\mathbf{2}$. 

Consequently, $G$ acts on $\UF$ on the left through precomposition with $\lambda$ (denote this action by $g\mapsto L_g$), and on the right through precomposition with $\rho$ (denoted by $g\mapsto R_g$). Overall, we have:
\begin{displaymath}
	(L_g\omega)f=1\IFF \omega(\lambda_g f)=1.
\end{displaymath}
Note that when $\omega=\delta_a$ is a principal ultrafilter:
\begin{eqnarray*}
	(L_g\delta_a)f=1&\IFF&\delta_a(\lambda_gf)=1\IFF (\lambda_gf)(a)=1\\
		&\IFF&f(ga)=1\IFF\delta_{ga}f=1\,,
\end{eqnarray*}
which implies $L_g\delta_a=\delta_{ga}$. In particular, $L_g:\UF\to\UF$ is a continuous extension of $\ell_g$ (for every $g\in G$) with respect to (the trace of) the product topology.
Thus we now have a continuous map $G\times\UF\to\UF$ defined by $g\times\omega\mapsto L_g\omega$. Now fix $\omega\in\UF$, considering it as a continuous function $R_\omega:G\to \UF$ defined by $g\mapsto L_g\omega$ (noting that $R_{\delta_a}$ extends $\rho_a$). This map extends to a continuous map of $\UF\to\UF$. The result is a map $\UF\times\UF\to\UF$, $(\sigma,\tau)\mapsto\sigma\cdot\tau$ extending the multiplication in $G$, continuous in the first coordinate (for any fixed choice of the second), and continuous in the second coordinate when the first is fixed and restricted to $G$. Let us rephrase this:
\begin{lemma}[Continuity properties of the product in $\UF$]\label{continuity of product} For every $g\in G$ and $\omega\in\UF$, the maps
\begin{displaymath}
	\left\{\begin{array}{rccl}
		L_g\,: & \UF & \longrightarrow & \UF\\
			& \sigma & \mapsto & \delta_g\cdot\sigma
	\end{array}\right.\quad\mathrm{and}\quad
	\left\{\begin{array}{rccl}
		R_\omega\,: & \UF & \longrightarrow & \UF\\
			&	\sigma & \mapsto & \sigma\cdot\omega
	\end{array}\right.
\end{displaymath}
are continuous.
\end{lemma}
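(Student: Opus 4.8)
The plan is to read both assertions off the construction of the product on $\UF$ carried out immediately above the statement, the only real task being to reconcile notation. Recall that $\UF$ carries the trace of the product topology on $\mathbf{2}^{\mathbf{2}^G}$, and that the sets $U_F=\{\omega\in\UF:\omega F=1\}$, $F\subseteq G$, form a basis for it: they cover $\UF$ (take $F=G$), one has $U_{F_1}\cap U_{F_2}=U_{F_1\cap F_2}$ because an ultrafilter is monotone and closed under finite intersections, and $\{\omega:\omega F=0\}=U_{G\minus F}$, so these basic sets already exhaust the subbasis coming from the product topology.

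For $L_g$, recall that $L_g$ is precomposition with $\lambda_g$, i.e.\ $(L_g\sigma)F=\sigma(\lambda_g F)=\sigma(g\inv F)$. Since $\lambda_g$ is a Boolean automorphism of $\mathbf{2}^G$ fixing the top element $G$, $L_g$ does send ultrafilters to ultrafilters; and $L_g\inv(U_F)=\{\sigma:\sigma(g\inv F)=1\}=U_{g\inv F}$ is basic open, so $L_g$ is continuous (this also re-proves that $L_g$ extends $\ell_g$ continuously, as claimed before the lemma). Finally, evaluating the product at a principal ultrafilter in the first slot gives $\delta_g\cdot\sigma=R_\sigma(\delta_g)=L_g\sigma$, since by construction the extension $R_\sigma:\UF\to\UF$ restricts on $G\subseteq\UF$ to $h\mapsto L_h\sigma$; hence the operator written $L_g:\sigma\mapsto\delta_g\cdot\sigma$ in the statement is exactly this continuous precomposition map.

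For $R_\omega$ the content is essentially the definition of the product. With $\omega$ fixed, the map $G\to\UF$, $g\mapsto L_g\omega$, is continuous because $G$ is discrete; as $G\hookrightarrow\UF$ is the Stone-\v{C}ech compactification of $G$ and $\UF$ is compact Hausdorff, this map has a unique continuous extension $\UF\to\UF$, and that extension is by definition $R_\omega:\sigma\mapsto\sigma\cdot\omega$, so $R_\omega$ is continuous by construction. I do not expect a genuine obstacle here: the lemma repackages the preceding construction, and the only point requiring care is the notational overload --- the builder map $R_\omega:G\to\UF$ versus its extension $R_\omega:\UF\to\UF$ named in the statement --- together with the identity $\delta_g\cdot\sigma=L_g\sigma$ that makes the two descriptions of $L_g$ coincide.
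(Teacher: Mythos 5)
Your proposal is correct and takes essentially the same route as the paper, which offers no separate argument at all: the lemma is stated as a rephrasing of the construction of the product, with continuity of $R_\omega$ holding by definition of the Stone--\v{C}ech extension and continuity of $L_g$ asserted as part of the construction. Your extra details --- the basic open sets $U_F$, the computation $L_g\inv(U_F)=U_{g\inv F}$, and the identification $\delta_g\cdot\sigma=R_\sigma(\delta_g)=L_g\sigma$ --- correctly fill in the routine verifications the paper leaves implicit.
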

We claim:
\begin{lemma}[multiplication in $\UF$] Let $a\in G$ and $\sigma,\tau\in\UF$ and $v\in Z^G$. One has the equalities:
\begin{displaymath}
	(1)\;\ulim{L_a\sigma}{g}v_g=\ulim{\sigma}{g}v_{ag}\quad
	(2)\;\ulim{\sigma\cdot\tau}{g}v_g=\ulim{\sigma}{s}\ulim{\tau}{t}v_{st}\,.
\end{displaymath}
\end{lemma}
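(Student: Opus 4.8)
The plan is to reduce both identities to the universal property of $\beta G$ together with the functoriality already packaged in Corollaries \ref{formula for the ultralimit map} and \ref{cor:change of variables} and in Lemma \ref{ultralimits commute with continuous maps}, rather than manipulating ultrafilters directly. Identity (2) is precisely the ``limit of limits is a limit'' phenomenon advertised at the start of the section, and the key realization is that the product $\sigma\cdot\tau$ is itself an ultralimit in $\beta G$.

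First I would dispatch (1) by applying the Change of Variables corollary (Corollary \ref{cor:change of variables}) to $f=\ell_a:G\to G$, $g\mapsto ag$. Since $L_a:\beta G\to\beta G$ is a continuous extension of $\ell_a$ and Stone--\v{C}ech extensions are unique (the target $\beta G$ being compact Hausdorff), $\beta\ell_a=L_a$, whence
\[
\ulim{\sigma}{g}v_{ag}=\ulim{\sigma}{g}v_{f(g)}=\ulim{(\beta f)\sigma}{g}v_g=\ulim{L_a\sigma}{g}v_g ,
\]
which is (1). One can instead unwind $(L_a\sigma)(S)=1\IFF\sigma(a\inv S)=1$ and observe that $a\inv[v,U]=[\,s\mapsto v_{as}\,,\,U\,]$, but the functorial route is shorter.

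For (2), recall that by construction $\sigma\cdot\tau=(\beta R_\tau)(\sigma)$, where $R_\tau:G\to\beta G$ is the continuous map $s\mapsto L_s\tau=\delta_s\cdot\tau$. Regarding $R_\tau$ as an element of $(\beta G)^G$ and applying Corollary \ref{formula for the ultralimit map} with $\beta G$ in the role of the (compact Hausdorff) target, we get $\sigma\cdot\tau=\ulim{\sigma}{s}(\delta_s\cdot\tau)$, the limit now taken in $\beta G$. Feeding this through the continuous map $\beta v:\beta G\to Z$ and invoking Lemma \ref{ultralimits commute with continuous maps},
\[
\ulim{\sigma\cdot\tau}{g}v_g=(\beta v)(\sigma\cdot\tau)=(\beta v)\Big(\ulim{\sigma}{s}(\delta_s\cdot\tau)\Big)=\ulim{\sigma}{s}(\beta v)(\delta_s\cdot\tau)=\ulim{\sigma}{s}\ulim{\delta_s\cdot\tau}{t}v_t ,
\]
and the inner limit is rewritten by part (1) (with $a=s$) as $\ulim{\delta_s\cdot\tau}{t}v_t=\ulim{L_s\tau}{t}v_t=\ulim{\tau}{t}v_{st}$, giving (2).

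The only real hazard is bookkeeping: one must keep straight that $\beta v$ has target $Z$ while the extension $\beta R_\tau$ hidden inside the definition of the product has target $\beta G$, and that each uniqueness claim invoked requires the relevant target ($Z$ or $\beta G$) to be compact Hausdorff. Once the correct Stone--\v{C}ech extension is named at each step, both identities fall out immediately and no genuine ultrafilter combinatorics are needed.
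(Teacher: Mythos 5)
Your proof is correct and follows essentially the same route as the paper: part (1) via Change of Variables with $\beta\ell_a=L_a$, and part (2) by the chain combining Corollary \ref{formula for the ultralimit map}, Lemma \ref{ultralimits commute with continuous maps}, and part (1), merely written from left to right instead of right to left. Your explicit identification $\sigma\cdot\tau=(\beta R_\tau)(\sigma)=\ulim{\sigma}{s}(\delta_s\cdot\tau)$ just spells out a step the paper leaves implicit.
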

\begin{proof} For (1) we apply the change of variables (see cor. \ref{cor:change of variables}) $f:G\to G$ given by $f=\ell_a$. Since $L_a=\beta\ell_a$, the formula follows.\\

To obtain (2), we apply (1) together with cor. \ref{formula for the ultralimit map} and lemma \ref{ultralimits commute with continuous maps}. Consider:
\begin{eqnarray*}
	\ulim{\sigma}{s}\left(\ulim{\tau}{t}v_{st}\right)
		&=&\ulim{\sigma}{s}\left(\ulim{L_s\tau}{t}v_t\right)\\
		&=&\ulim{\sigma}{s}\left((\beta v)(L_s\tau)\right)\\
		&=&(\beta v)\ulim{\sigma}{s}L_s\tau\\
		&=&(\beta v)(\sigma\cdot\tau)\\
		&=&\ulim{\sigma\cdot\tau}{g}v_g\,,
\end{eqnarray*}
which is what we wanted to prove.
\end{proof}
An immediate application to the general dynamical system $G\actson Z$:
\begin{cor}[Diagonal Principle] Suppose $Z$ is a compact Hausdorff $G$-space. Then, for every $\nu,\omega\in\UF$ and every $z\in Z$ one has $\ultra{\nu\cdot\omega}z=\ultra{\nu}\ultra{\omega}z$.
\end{cor}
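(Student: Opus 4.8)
The plan is to read the identity off from part (2) of the multiplication lemma above together with Lemma~\ref{ultralimits commute with continuous maps}. Fix $\nu,\omega\in\UF$ and $z\in Z$, and apply the multiplication lemma to the particular vector $v\in Z^G$ with $v_g=T^g(z)$. Since $G$ acts by homeomorphisms, $v_{st}=T^{st}(z)=T^s\!\left(T^t(z)\right)$, and the definition of $\ultra{\nu\cdot\omega}z$ together with part (2) gives
\[
\ultra{\nu\cdot\omega}z=\ulim{\nu\cdot\omega}{g}v_g=\ulim{\nu}{s}\ulim{\omega}{t}v_{st}=\ulim{\nu}{s}\left(\ulim{\omega}{t}T^s\!\left(T^t(z)\right)\right).
\]

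Next I would simplify the inner limit for each fixed outer index $s\in G$. As $T^s:Z\to Z$ is a homeomorphism, hence continuous, Lemma~\ref{ultralimits commute with continuous maps} pulls it out of the $\omega$-limit:
\[
\ulim{\omega}{t}T^s\!\left(T^t(z)\right)=T^s\!\left(\ulim{\omega}{t}T^t(z)\right)=T^s\!\left(\ultra{\omega}z\right).
\]
Substituting back, the outer expression becomes precisely the $\nu$-limit along the orbit of the point $\ultra{\omega}z$, namely
\[
\ulim{\nu}{s}T^s\!\left(\ultra{\omega}z\right)=\ultra{\nu}\ultra{\omega}z,
\]
which is the asserted equality.

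I expect no genuine obstacle here once the multiplication lemma is available; the only point requiring care is the order in which the two nested limits are simplified. One must collapse the \emph{inner} limit first, using continuity of the fixed homeomorphism $T^s$, and only afterwards recognize the \emph{outer} limit as an orbit $\nu$-limit of $\ultra{\omega}z$. Reversing this would be illegitimate, since the orbit map $\omega\mapsto\ultra{\omega}z$ is in general not continuous on $\UF$ (indeed it is merely the restriction to an orbit of the not-necessarily-continuous operator on $\bdCone X$ discussed in the introduction), so one cannot pass $T^s$ through the outer limit. I would also note in passing that the statement amounts to saying that $\omega\mapsto\ultra{\omega}$ is a semigroup homomorphism of $(\UF,\cdot)$ into $Z^Z$, i.e. that its image is Ellis' enveloping semigroup of $G\actson Z$; but the computation above is self-contained and uses nothing beyond the lemmas already proved.
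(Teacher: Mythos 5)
Your argument is correct and is essentially identical to the paper's proof: both apply part (2) of the multiplication lemma to the orbit vector $v_g=T^g(z)$ and then use Lemma~\ref{ultralimits commute with continuous maps} with $f=T^s$ to collapse the inner limit before recognizing the outer one as $\ultra{\nu}$ applied to $\ultra{\omega}z$. Your remark about why the inner limit must be simplified first is a sensible clarification but does not change the route.
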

\begin{proof} We have
\begin{displaymath}
	\ultra{\nu\cdot\omega}z=\ulim{\nu\cdot\omega}{p}T^p(z)
		=\ulim{\nu}{a}\ulim{\omega}{b}T^{ab}(z)
		=\ulim{\nu}{a}T^a\left(\ulim{\omega}{b}T^b(z)\right)
		=\ultra{\nu}\left(\ultra{\omega}z\right)\,.
\end{displaymath}
Note that the third equality is due to lemma \ref{ultralimits commute with continuous maps} applied to $f=T^a$. The second equality is due to the preceding lemma, of course.
\end{proof}
In other words, an action of a group $G$ on a compact Hausdorff space $Z$ by homeomorphisms extends to an action of $\beta G$ on $Z$. We remark again that $\beta G$ acts on $Z$ by maps that are not necessarily continuous.\\

Here is an example of a computation in the semi-group $\beta G$:
\begin{lemma}\label{homomorphisms extend} Let $f:G\to G$ be a homomorphism. Then $\beta f:\beta G\to\beta G$ is a homomorphism as well.
\end{lemma}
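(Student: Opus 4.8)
The plan is to show that $\beta f$ respects the multiplication on $\beta G$ that was just constructed, using the fact that both structures are built by continuous extension from $G$ and that $f$ is already multiplicative on $G$. Concretely, I want to verify $(\beta f)(\sigma\cdot\tau)=(\beta f)(\sigma)\cdot(\beta f)(\tau)$ for all $\sigma,\tau\in\UF$. The natural tool is the change-of-variables corollary \ref{cor:change of variables} together with the formula (2) of the multiplication lemma, i.e. $\ulim{\sigma\cdot\tau}{g}v_g=\ulim{\sigma}{s}\ulim{\tau}{t}v_{st}$, applied to cleverly chosen vectors $v\in Z^G$; but since $\beta f$ is a map into $\beta G$ rather than into a compact $G$-space $Z$, the cleanest route is to test equality of the two ultrafilters $(\beta f)(\sigma\cdot\tau)$ and $(\beta f)(\sigma)\cdot(\beta f)(\tau)$ against an arbitrary subset $A\subseteq G$, or equivalently to exploit density of $G$ in $\beta G$ and continuity.

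First I would pin down how $\beta f$ acts on ultra-limits: for any compact Hausdorff space $Z$ and any $w\in Z^G$, applying corollary \ref{cor:change of variables} with the function $f$ gives $\ulim{(\beta f)\sigma}{g}w_g=\ulim{\sigma}{g}w_{f(g)}$. Next I would fix $\tau\in\UF$ and consider the continuous self-maps $R_\tau:\UF\to\UF$ and $R_{(\beta f)\tau}:\UF\to\UF$ from lemma \ref{continuity of product}. I want to show $(\beta f)\circ R_\tau = R_{(\beta f)\tau}\circ(\beta f)$ as maps $\UF\to\UF$. Both sides are continuous (composites of continuous maps, using lemma \ref{continuity of product} and continuity of $\beta f$), and $\UF$ is the Stone–Čech compactification of the discrete set $G$, so it suffices to check the identity on the dense subset $\{\delta_a : a\in G\}$. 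There, using $L_g\delta_a=\delta_{ga}$ established in the text, one computes $R_\tau(\delta_a)=\delta_a\cdot\tau$; I would then show $(\beta f)(\delta_a\cdot\tau)=\delta_{f(a)}\cdot(\beta f)\tau=R_{(\beta f)\tau}(\delta_{f(a)})=R_{(\beta f)\tau}((\beta f)\delta_a)$, where the first equality reduces, via the change-of-variables identity, to the fact that left translation by $a$ on $G$ corresponds under $f$ to left translation by $f(a)$ — precisely the homomorphism property $f(ab)=f(a)f(b)$.

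Having the identity $(\beta f)\circ R_\tau = R_{(\beta f)\tau}\circ(\beta f)$ for every fixed $\tau$, evaluating at $\sigma$ yields $(\beta f)(\sigma\cdot\tau)=(\beta f)(\sigma)\cdot(\beta f)\tau$ for all $\sigma,\tau\in\UF$, which is exactly the claim; associativity of the product on $\UF$ is not even needed. I would also note that $\beta f$ is automatically well defined and unique by the universal property, and that if $f$ is the identity then so is $\beta f$, so the construction is functorial — consistent with how functoriality of $\beta$ was already used in corollary \ref{cor:change of variables}.

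The main obstacle, such as it is, is purely bookkeeping: keeping straight that $\beta f$ intertwines the map $g\mapsto\delta_g$ with $g\mapsto\delta_{f(g)}$ (so that it does not merely preserve the set of principal ultrafilters but does so compatibly with $f$), and making sure that when I invoke corollary \ref{cor:change of variables} I am applying it with $v$ ranging over a compact $Z$ rich enough to separate points of $\UF$ — e.g. $Z=\mathbf 2$ with $w$ the indicator of a subset, recovering the set-theoretic description $((\beta f)\omega)(A)=1\iff\omega(f^{-1}A)=1$. Once that dictionary is fixed, both sides of every equality are determined by their values on such indicators and the computation collapses to the homomorphism property of $f$ on $G$. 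I would therefore expect the proof to be short, with the only subtlety being the choice of where to test equality (on principal ultrafilters, via density and continuity) rather than attempting a direct manipulation of iterated ultra-limits.
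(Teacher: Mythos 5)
Your argument is correct, but it is organized differently from the paper's. The paper proves the identity by one explicit chain of equalities in the ultralimit calculus: it writes $(\beta f)(\sigma\cdot\tau)=\ulim{\sigma\cdot\tau}{g}\delta_{f(g)}$, converts this to the iterated limit $\ulim{\sigma}{s}\ulim{\tau}{t}\delta_{f(st)}$ via part (2) of the multiplication lemma, uses $f(st)=f(s)f(t)$ to rewrite the inner term as $L_{f(s)}\delta_{f(t)}$, and then pulls the two limits through the continuous maps $L_{f(s)}$ and $R_{(\beta f)(\tau)}$ using lemma \ref{continuity of product} together with lemma \ref{ultralimits commute with continuous maps}. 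You avoid iterated ultralimits altogether: for fixed $\tau$ you compare the two continuous maps $(\beta f)\circ R_\tau$ and $R_{(\beta f)\tau}\circ(\beta f)$, check them on the dense set of principal ultrafilters, where $(\beta f)(\delta_a\cdot\tau)=(\beta f)(L_a\tau)=L_{f(a)}\bigl((\beta f)\tau\bigr)=\delta_{f(a)}\cdot(\beta f)\tau$ follows from corollary \ref{cor:change of variables} (or part (1) of the multiplication lemma), the homomorphism property $f(ag)=f(a)f(g)$, and continuity of $L_{f(a)}$, and then conclude by density of $G$ in $\UF$ and the Hausdorff property. The ingredients are the same in both treatments --- the one-sided continuity of the product from lemma \ref{continuity of product} and multiplicativity of $f$ on $G$ --- but your packaging via uniqueness of continuous extensions is the standard, more conceptual route and in effect abstracts the step where the paper pulls $\ulim{\sigma}{s}$ through $R_{(\beta f)(\tau)}$; the paper's computation, by contrast, exercises the ultralimit formulas it has just established and makes every exchange of limit and map explicit. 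The only point to make explicit in a final write-up is the inner check on principal ultrafilters: it still requires lemma \ref{ultralimits commute with continuous maps} applied to $L_{f(a)}$, or equivalently the set-theoretic description $\bigl((\beta f)\omega\bigr)A=1\iff\omega\bigl(f^{-1}A\bigr)=1$, exactly as you indicate.
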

\begin{remark} The same is not true for antimorphisms -- the inversion $g\mapsto g\inv$ for example -- due to the product map $\UF\times\UF\to\UF$ having some discontinuities.
\end{remark}
\begin{proof} Let $\sigma,\tau\in\UF$, and we compute $(\beta f)(\sigma\cdot\tau)$.
\begin{displaymath}\begin{array}{rclcl}
	(\beta f)(\sigma\cdot\tau)
		&=&\ulim{\sigma\cdot\tau}{g}\delta_{f(g)}
		&=&\ulim{\sigma}{s}\ulim{\tau}{t}\delta_{f(st)}\\
		&=&\ulim{\sigma}{s}\ulim{\tau}{t}\delta_{f(s)}\cdot\delta_{f(t)}
		&=&\ulim{\sigma}{s}\ulim{\tau}{t}L_{f(s)}\delta_{f(t)}\\
		&=&\ulim{\sigma}{s}L_{f(s)}\left(\ulim{\tau}{t}\delta_{f(t)}\right)
		&=&\ulim{\sigma}{s}L_{f(s)}\left(\ulim{(\beta f)(\tau)}{t}\delta_t\right)\\
		&=&\ulim{\sigma}{s}L_{f(s)}\left((\beta f)(\tau)\right)
		&=&\ulim{\sigma}{s}\delta_{f(s)}\cdot(\beta f)(\tau)\\
		&=&R_{(\beta f)(\tau)}\left(\ulim{\sigma}{s}\delta_{f(s)}\right)
		&=&R_{(\beta f)(\tau)}\left(\ulim{(\beta f)(\sigma)}{s}\delta_s\right)\\
		&=&R_{(\beta f)(\tau)}\left((\beta f)(\sigma)\right)
		&=&(\beta f)(\sigma)\cdot(\beta f)(\tau)\,.
		\end{array}
\end{displaymath}
\end{proof}
An important anti-homomorphism for our application is the antipode: the extension of $g\mapsto g\inv$.
\begin{defn}[Antipodes] for each $\omega\in\beta G$ we define its {\it antipode} $S\omega\in\beta G$ by setting $(S\omega) F=1$ iff $\omega F\inv=1$.
\end{defn}
\begin{lemma}\label{inversion is continuous} Let $\iota$ be the unique continuous extension to $\beta G$ of the map $j:G\to\beta G$ defined by $j(g)=\delta_{g\inv}$. Then $\iota(\omega)=S\omega$ for all $\omega\in\beta G$. In particular, $\iota$ is a homeomorphism of $\beta G$ onto $\beta G$.
\end{lemma}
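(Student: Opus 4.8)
The plan is to show directly that the assignment $\omega\mapsto S\omega$ is a continuous self-map of $\UF$ whose restriction to the principal ultrafilters equals $j$, and then to invoke the uniqueness of continuous extensions out of $\UF$. The first step is to check that $S$ agrees with $j$ on $G$: for $g\in G$ and $F\subseteq G$ one has $(S\delta_g)F=1\IFF\delta_g(F\inv)=1\IFF g\in F\inv\IFF g\inv\in F\IFF\delta_{g\inv}F=1$, so $S\delta_g=\delta_{g\inv}=j(g)$ for every $g\in G$. (Here one also notes in passing that $S\omega$ is again an ultrafilter, so that $S$ really maps $\UF$ into $\UF$; this is immediate because the map $F\mapsto F\inv$ is an involutive bijection of $\mathbf{2}^G$ commuting with complementation and with finite unions, which is precisely what is needed to transport the ultrafilter axioms.)

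The second step is continuity. Recall that $\UF\subseteq\mathbf{2}^{(\mathbf{2}^G)}$ carries the trace of the product (Tychonoff) topology, for which a subbasis is given by the sets $\gen{F}_i=\{\omega\,:\,\omega F=i\}$ as $F$ ranges over subsets of $G$ and $i$ over $\mathbf{2}$. Since
\[
	S\inv\big(\gen{F}_i\big)=\big\{\sigma\,:\,(S\sigma)F=i\big\}=\big\{\sigma\,:\,\sigma F\inv=i\big\}=\gen{F\inv}_i\,,
\]
the $S$-preimage of every subbasic open set is again subbasic open; hence $S$ is continuous on $\UF$.

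Finally, since $G$ is discrete, $\UF$ is its Stone--\v{C}ech compactification (as recalled above), so $G$ is dense in the compact Hausdorff space $\UF$ and a continuous map out of $\UF$ is determined by its values on $G$. Applying this to the two continuous maps $\iota$ and $S$, which agree on $G$ by the first step, yields $\iota=S$. Moreover $\left(F\inv\right)\inv=F$ for every $F\subseteq G$ gives $S\circ S=\id$ on $\UF$, so $S$ is a continuous involution, hence a homeomorphism of $\UF$ onto itself, and the same holds for $\iota$. I do not anticipate any genuine obstacle here: the only points requiring care are the bookkeeping that inversion on subsets of $G$ respects the Boolean operations underlying the definition of an ultrafilter, and that one works in the product topology on $\UF$ in which $G$ sits densely — both of which are guaranteed by the set-up established earlier in this section.
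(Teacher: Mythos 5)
Your proof is correct, but it takes a different route from the paper's. The paper never verifies continuity of the antipode map directly: it takes the continuity of $\iota$ for granted (it is the Stone--\v{C}ech extension), writes $\omega=\ulim{\omega}{g}\delta_g$, and applies Lemma \ref{ultralimits commute with continuous maps} to get $\iota\omega=\ulim{\omega}{g}\delta_{g\inv}=\ulim{S\omega}{h}\delta_h=S\omega$, so that continuity of $S$ comes out as a by-product of the identification $\iota=S$. You instead prove outright that $S$ is continuous, by observing that the $S$-preimage of each subbasic set $\left\{\omega\,:\,\omega F=i\right\}$ of the Tychonoff topology is $\left\{\sigma\,:\,\sigma F\inv=i\right\}$, check $S\delta_g=\delta_{g\inv}=j(g)$ on the principal ultrafilters, and then conclude $\iota=S$ from uniqueness of continuous maps into a Hausdorff space agreeing on the dense subset $G\subset\beta G$; both arguments finish identically, via $S\circ S=\mathrm{id}$. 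Your version is more elementary and self-contained (pure point-set topology, no ultralimit calculus, plus the explicit check that $S\omega$ is again an ultrafilter), while the paper's version is shorter given the lemmas already in place and stays inside the ultralimit formalism it keeps using afterwards. Either argument is acceptable.
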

\begin{proof} Fix $\omega\in\beta G$ and $A\subseteq G$ and write $\omega=\ulim{\omega}{g}\delta_g$. Using lemma \ref{ultralimits commute with continuous maps} we compute:
\begin{equation*}
	\iota\omega=\iota\left(\ulim{\omega}{g}\delta_g\right)
	=\ulim{\omega}{g}\left(\iota\delta_g\right)
	=\ulim{\omega}{g}{\delta_{g\inv}}=\ulim{S\omega}{h}{\delta_h}=S\omega\,.
\end{equation*}
Finally, $\iota(\omega)=S\omega$ for all $\omega\in\beta G$ implies $\iota\circ\iota=\mathrm{id}_{\beta G}$, so $\iota$ is surjective, and is a homeomorphism. 
\end{proof}

\section{Dynamics of CAT(0) boundaries.}\label{section:dynamics} The goal of this section is to establish the basic applications of ultra-limits to boundaries of CAT(0) groups. We also revisit some of the key notions and results to reformulate them in terms of ultra-filters, for future use.

From now on, $(X,d)$ is a CAT(0) space together with a geometric action by a group $G$. Let $R>0$ be such that the ball $\ball{x}{R}$ intersects every orbit of $G$, for all $x\in X$.

By a theorem of Geoghegan and Ontaneda from \cite{[Geoghegan-Ontaneda-almost-geodesically-complete]}, $R$ may be chosen large enough so that for any $x,y\in X$ there is a geodesic ray in $X$ emanating from $x$ and passing through $\ball{y}{R}$.

\subsection{Ultra-filters, Duality, Visibility} Recall that two closed subsets of $X$ have equal limit sets in $\bdCone X$ if they are at a finite Hausdorff distance from each other. It follows that if $\omega\in\beta G$ is non-principal, then the map $x\mapsto\ultra{\omega} x$ is constant when restricted to $X$. We henceforth denote this constant by $\attr{\omega}$.
\begin{defn}[Attracting/Repelling point] Let $\omega\in\beta G$ be a non-principal ultra-filter. Then the points $\attr{\omega}$ and $\repel{\omega}=\attr{(S\omega)}$ are called the {\it attracting point} and the {\it repelling point} of $\omega$, respectively. When $p=\attr{\omega}$ and $n=\repel{\omega}$, we shall sometimes use the notation $\connect{\omega}{n}{p}$.
\end{defn}
It is a rather elementary result (see \cite{[Ballmann-DMV]}) that $G$ having rank one implies $\diam\bdTits X=\infty$. Going deeper one has the result of Ballmann and  Buyalo \cite{[Ballmann-Buyalo-2pi]}: $G$ is of higher rank if and only if $\diam\bdTits X\leq 2\pi$.  This bound was improved to $3\pi/2$ by Swenson and Papasoglu \cite{[Papasoglu-Swenson-JSJ]} using the $\pi$-convergence property. It is this property that we seek to capitalize on using ultra-filters. We quote:
\begin{thm}[\cite{[Papasoglu-Swenson-JSJ]}]\label{original pi-convergence} Let $G$ be a group acting geometrically on a CAT(0) space $X$, and let $\theta\in[0,\pi]$. Then for any sequence of distinct elements $(g_m)_{m=1}^\infty$ of $G$ there exist points $n,p\in\bd X$ and a subsequence $\left(g_{m_k}\right)_{k=1}^\infty$ such that $g_{m_k}(K)\subseteq U$ holds for every neighbourhood $U$ of $\Titsball{p}{\theta}$ and every compact $K\subseteq\bdCone X\minus\Titsball{n}{\pi-\theta}$. 
\end{thm}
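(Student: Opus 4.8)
The plan is to read this as a ``north--south dynamics'' statement in which the attracting and repelling fixed points of ordinary convergence dynamics are replaced by the Tits balls $\Titsball{p}{\theta}$ and $\Titsball{n}{\pi-\theta}$, and in which the constant $\pi$ records that these two balls are Tits-antipodal-complementary. First I would produce the points $n,p$: the action being geometric, $X$ is proper, $\hat X=X\cup\bdCone X$ is cone-compact, and the orbit map $g\mapsto gx_0$ is proper, so $d(x_0,g_mx_0)\to\infty$; after passing to a subsequence (still written $(g_m)$) I may assume $g_mx_0\to p$ and $g_m\inv x_0\to n$ in $\hat X$. Since any two $G$-orbits lie at bounded Hausdorff distance, one then also has $g_mx\to p$ and $g_m\inv x\to n$ for \emph{every} $x\in X$; this uniformity over basepoints will be needed at the end.

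Next I would argue by contradiction. If the conclusion fails for this pair, there are a cone-compact $K\subseteq\bdCone X\minus\Titsball{n}{\pi-\theta}$, a neighbourhood $U$ of $\Titsball{p}{\theta}$, and --- after passing to a subsequence --- points $z_m\in K$ with $g_mz_m\notin U$. Cone-compactness of $K$ and of $\hat X$ lets me pass to a further subsequence with $z_m\to z\in K$ and $g_mz_m\to q$, $q\notin\Titsball{p}{\theta}$. Lower semicontinuity of the Tits metric for the cone topology makes $\Titsball{n}{\pi-\theta}$ cone-closed and lets $z\mapsto\Tits{n}{z}$ attain its minimum over $K$, so $\Tits{n}{z_m}\ge\pi-\theta+\varepsilon_0$ for all $m$ and some $\varepsilon_0\in(0,\theta)$. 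Writing $\angle_{\mathrm{T}}(\cdot,\cdot):=\min\{\pi,\Tits{\cdot}{\cdot}\}$ for the Tits angular metric, everything then comes down to the single inequality
\[
\angle_{\mathrm{T}}(p,q)+\angle_{\mathrm{T}}(n,z)\le\pi,
\]
the ``$\pi$'' of $\pi$-convergence: granting it, $\angle_{\mathrm{T}}(p,q)\le\theta-\varepsilon_0<\pi$, hence $\Tits{p}{q}=\angle_{\mathrm{T}}(p,q)<\theta$ and $q\in\Titsball{p}{\theta}$, contradicting $q\notin\Titsball{p}{\theta}$. (The degenerate cases $\theta\in\{0,\pi\}$ I would treat separately.)

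To prove that inequality I would exploit that $g_m\inv$ is an isometry carrying the triple $(x,g_mx,g_mz_m)$ to $(g_m\inv x,\,x,\,z_m)$, so that for every $x\in X$
\[
\angle_x(g_mx,g_mz_m)=\angle_{g_m\inv x}(x,z_m).
\]
On the left, lower semicontinuity of the Alexandrov angle at the fixed point $x$ together with $g_mx\to p$ and $g_mz_m\to q$ gives $\liminf_m\angle_x(g_mx,g_mz_m)\ge\angle_x(p,q)$. On the right, the basepoint $y_m:=g_m\inv x$ escapes to $n$, and the crucial geometric input is that an Alexandrov angle measured at such an escaping basepoint is, in the limit, dominated by $\pi$ minus the Tits defect at the limit point:
\[
\limsup_m\angle_{y_m}(x,z_m)\le\pi-\angle_{\mathrm{T}}(n,z)
\]
--- morally, ``seen from near $n$, the direction back to $x$ is Tits-antipodal to $n$, and the direction to $z_m$ falls within $\angle_{\mathrm{T}}(n,z)$ of that antipode.'' Combining these yields $\angle_x(p,q)\le\pi-\angle_{\mathrm{T}}(n,z)$ for \emph{every} $x$, and taking the supremum over $x$ upgrades $\angle_x(p,q)$ to $\angle_{\mathrm{T}}(p,q)=\sup_{x\in X}\angle_x(p,q)$, giving the claim.

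The hard part is the last display --- the assertion that Alexandrov angles at basepoints running out to $n$ asymptotically respect the Tits-antipodality structure at $n$. This is a genuine $\mathrm{CAT}(0)$-comparison statement and it fails if one refuses to move the basepoint (an angle at a fixed point only ever sees $\angle_x$, never the full Tits angle --- which is precisely why the supremum over $x$ is indispensable). In the rank-one case it collapses to ordinary convergence dynamics along a hyperbolic axis, and for $\EE^n$ it is an elementary computation with parallel rays, but in general it requires monotonicity of comparison angles along geodesic rays, convexity of Busemann functions, and a careful interplay between the cone and Tits topologies --- exactly the apparatus of \cite{[Papasoglu-Swenson-JSJ]}. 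I would therefore quote it from there and use it as a black box, as it is so used below.
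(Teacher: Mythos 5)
The paper does not actually prove this statement: it quotes it from \cite{[Papasoglu-Swenson-JSJ]}, adding only the remark that $n$ and $p$ are obtained as limits of subsequences of $\left(g_{m_k}x\right)$ and $\left(g_{m_k}^{-1}x\right)$ --- which is exactly how you produce them. Your surrounding reduction (independence of basepoint, lower semicontinuity of the Tits metric, the contradiction setup, and the final supremum over basepoints) is sound, and since you delegate the one genuinely hard comparison inequality to the same reference, your treatment is essentially the same as the paper's.
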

Instead of reproving this result in terms of ultra-filters, it is enough to remark on how the $g_{m_k}$ and the points $p$ and $n$ are obtained. Picking a point $x$, the $g_{m_k}$ are chosen so that the sequences $\left(g_{m_k}(x)\right)$ and $\left(g_{m_k}\inv(x)\right)$ converge. The limits are denoted $p$ and $n$, respectively. It is easy to conclude that, given the sequence $(g_m\inv)$, the roles of $p$ and $n$ will be swapped. Inevitably, $\pi$-convergence has the following reformulation in terms of ultra-filters:
\begin{thm}\label{pi-convergence} Let $G$ be a group acting geometrically on a CAT(0) space $X$, let $\theta\in[0,\pi]$ and $\omega\in\UF$ non-principal. Then 
\begin{displaymath}
	\Tits{x}{\repel{\omega}}\geq\pi-\theta\THEN\Tits{\ultra{\omega}x}{\attr{\omega}}\leq\theta
\end{displaymath}
holds for every $x\in\bd X$.
\end{thm}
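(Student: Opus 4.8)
The plan is to deduce this ultra-filter reformulation directly from the sequential statement, Theorem~\ref{original pi-convergence}, by extracting from the non-principal ultra-filter $\omega$ a single sequence of distinct elements of $G$ that realizes all the relevant ultra-limits at once. Fix a basepoint $o\in X$. Recall that $\attr{\omega}=\ulim{\omega}{g}g\cdot o$, that $\repel{\omega}=\attr{S\omega}=\ulim{\omega}{g}g\inv\cdot o$ (by Lemma~\ref{inversion is continuous} together with corollary~\ref{formula for the ultralimit map}), and that $\ultra{\omega}x=\ulim{\omega}{g}g\cdot x$; since $\omega$ is non-principal and $G\actson X$ is proper, all three are points of $\bd X$. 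As $X$ is proper, $\hat X=X\cup\bdCone X$ is compact and metrizable, so each of these three points has a countable decreasing basis of cone-neighbourhoods in $\hat X$, say $\{V_m\}$, $\{V_m'\}$ and $\{W_m\}$ respectively. For each $m$, the set
\[
	A_m=\{\,g\in G:\ g\cdot o\in V_m\,\}\cap\{\,g\in G:\ g\inv\cdot o\in V_m'\,\}\cap\{\,g\in G:\ g\cdot x\in W_m\,\}
\]
is the intersection of three sets of $\omega$-measure $1$ (by the very definition of the three ultra-limits above), so $\omega A_m=1$ and $A_m$ is infinite. Choosing $g_m\in A_m$ inductively with the $g_m$ pairwise distinct, we obtain a sequence of distinct elements along which $g_m\cdot o\to\attr{\omega}$, $g_m\inv\cdot o\to\repel{\omega}$ and $g_m\cdot x\to\ultra{\omega}x$ in the cone topology.

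Next I would apply Theorem~\ref{original pi-convergence} to the sequence $(g_m)$ and the given $\theta$, obtaining a subsequence $(g_{m_k})$ and points $n,p\in\bd X$ with the asserted $\pi$-convergence property. The crucial point is to invoke the description -- recalled immediately before the present statement -- of how $n$ and $p$ arise in the proof of that theorem: they are the cone-limits of $g_{m_k}\inv(o)$ and $g_{m_k}(o)$. But the full sequences $g_m(o)$ and $g_m\inv(o)$ already converge, to $\attr{\omega}$ and $\repel{\omega}$, so all their subsequences do too; hence $p=\attr{\omega}$ and $n=\repel{\omega}$.

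Now suppose $\Tits{x}{\repel{\omega}}\geq\pi-\theta$, so that $x\notin\Titsball{\repel{\omega}}{\pi-\theta}$ and $K=\{x\}$ is an admissible compact set in Theorem~\ref{original pi-convergence}. Then $g_{m_k}(x)\in U$ for all sufficiently large $k$, for every cone-neighbourhood $U$ of $\Titsball{\attr{\omega}}{\theta}$. Since $g_{m_k}(x)\to\ultra{\omega}x$ and $\hat X$ is metrizable, letting $U$ shrink gives $\ultra{\omega}x\in\clCone{\Titsball{\attr{\omega}}{\theta}}$. Finally, by lower semi-continuity of the Tits metric with respect to the cone topology, the closed Tits ball $\{\,z\in\bd X:\ \Tits{z}{\attr{\omega}}\leq\theta\,\}$ is cone-closed and hence contains this cone-closure, so $\Tits{\ultra{\omega}x}{\attr{\omega}}\leq\theta$, as claimed.

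The argument is largely bookkeeping; the one place that genuinely requires attention is the identification $p=\attr{\omega}$, $n=\repel{\omega}$, which is why Theorem~\ref{original pi-convergence} must be used together with the way its limit points are produced rather than as a pure black box. The remaining step -- converting ``$\ultra{\omega}x$ lies in every cone-neighbourhood of $\Titsball{\attr{\omega}}{\theta}$'' into the stated Tits-distance bound -- is routine given the lower semi-continuity of the Tits metric recalled above.
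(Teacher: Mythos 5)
Your argument is correct and is essentially the paper's own: the paper proves Theorem~\ref{pi-convergence} precisely by remarking that the points $n,p$ in Theorem~\ref{original pi-convergence} arise as cone-limits of $g_{m_k}^{\pm1}(x)$, so extracting from $\omega$ a sequence realizing $\attr{\omega}$, $\repel{\omega}$ and $\ultra{\omega}x$ simultaneously and then using lower semi-continuity of the Tits metric is exactly the intended derivation, which you merely spell out in more detail. The only cosmetic point is that if $\Titsball{\repel{\omega}}{\pi-\theta}$ is read as a closed ball, the hypothesis $\Tits{x}{\repel{\omega}}\geq\pi-\theta$ should be fed into Theorem~\ref{original pi-convergence} with $\theta+\epsilon$ in place of $\theta$ and $\epsilon\to 0$, a one-line adjustment that does not affect the argument.
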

The $\pi$-convergence phenomenon is related to the notion of dual points introduced by Eberlein (\cite{[Eberlein-duality]}). We rewrite the original definition using ultra-filters:
\begin{defn}[Duality] A pair $(n,p)\in\bdCone X\times\bdCone X$ is said to be a {\it pair of $G$-dual points}, if there exists $\omega\in\beta G$ with $\connect{\omega}{n}{p}$.

We denote the set of dual pairs by $\hsm{D}$. Given $n\in\bdCone X$, we denote the set of all points $p\in\bdCone X$ dual to $n$ by $\hsm{D}(n)$. 
\end{defn}
\begin{lemma}[Duality properties]\label{duality properties} Suppose $G$ is a group acting discontinuously by isometries on a proper CAT(0) space $X$:
\begin{enumerate}
	\item $\hsm{D}$ is symmetric: $(n,p)\in\hsm{D}$ if and only if $(p,n)\in\hsm{D}$.
	\item \label{D} $\hsm{D}$ is a closed subspace of $\bdCone X\times\bdCone X$, invariant under the natural action of $G\times G$.
	\item $\hsm{D}(n)$ is a closed $G$-invariant subspace of $\bdCone X$ for all $n\in\bdCone X$.
\end{enumerate}
\end{lemma}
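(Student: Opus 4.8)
The plan is to deduce all three claims from a small toolkit already assembled in the excerpt: the antipode $S$ is a homeomorphism of $\beta G$ with $S\circ S=\id$ (Lemma \ref{inversion is continuous}) which carries $G\subset\beta G$ onto itself by $g\mapsto g\inv$ and therefore preserves non-principality; the Diagonal Principle $\ultra{\nu\cdot\omega}=\ultra{\nu}\ultra{\omega}$; the one-sided continuity of multiplication in $\beta G$ (Lemma \ref{continuity of product}); the continuity of the orbit maps $\omega\mapsto\ultra{\omega}z$ (Corollary \ref{continuous orbit maps}); and the fact that for non-principal $\omega$ the operator $\ultra{\omega}$ is constant on $X$ with value $\attr{\omega}$. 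I would dispatch (1) first, since it is essentially free: if $\connect{\omega}{n}{p}$, then $S\omega$ is again non-principal and, unwinding definitions, $\attr{S\omega}=\repel{\omega}=n$ while $\repel{S\omega}=\attr{S(S\omega)}=\attr{\omega}=p$; that is, $\connect{S\omega}{p}{n}$, so $(p,n)\in\hsm{D}$.

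For the $G\times G$-invariance in (2) I would first use (1) to reduce to the one-sided statement ``$(n,p)\in\hsm{D}\Rightarrow(n,g\cdot p)\in\hsm{D}$ for all $g\in G$'': alternating this step with the symmetry moves both coordinates by arbitrary group elements. For the reduced statement, given $\connect{\omega}{n}{p}$, the natural candidate is $\mu=\delta_g\cdot\omega$. It is non-principal because $L_g$ is a homeomorphism of $\beta G$ carrying $G$ onto $G$; the Diagonal Principle gives $\ultra{\mu}x=g\cdot\ultra{\omega}x=g\cdot p$ for $x\in X$, so $\attr{\mu}=g\cdot p$; and to compute $\repel{\mu}$ I would write $\mu=\lim_{t\to\omega}\delta_{gt}$, push it through $S$ and then through a right translation (using one-sided continuity), obtaining $S\mu=(S\omega)\cdot\delta_{g\inv}$, whence $\repel{\mu}=\ultra{S\mu}x=\ultra{S\omega}(g\inv x)=\ultra{S\omega}x=n$ by constancy of $\ultra{S\omega}$ on $X$. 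Hence $\connect{\mu}{n}{g\cdot p}$.

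For the closedness of $\hsm{D}$ I would fix $x\in X$ and package the attracting/repelling data into the single map $\Phi:\beta G\to\hat X\times\hat X$, $\Phi(\omega)=(\ultra{S\omega}x,\ultra{\omega}x)$, which is continuous by Corollary \ref{continuous orbit maps} and Lemma \ref{inversion is continuous}; since $\beta G$ is compact, $\Phi(\beta G)$ is closed. The heart of this part is the identification $\hsm{D}=\Phi(\beta G)\cap(\bdCone X\times\bdCone X)$: the inclusion ``$\subseteq$'' is immediate, and for ``$\supseteq$'' one observes that if $\Phi(\omega)$ lands in $\bdCone X\times\bdCone X$ then $\ultra{\omega}x\in\bdCone X$ forces $\omega$ to be non-principal (for $\omega=\delta_g$ one has $\ultra{\delta_g}x=g\cdot x\in X$), and then $\Phi(\omega)=(\repel{\omega},\attr{\omega})$ is literally a dual pair. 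As $\bdCone X$ is closed in the compactum $\hat X$, $\hsm{D}$ is an intersection of two closed sets, hence closed; its $G\times G$-invariance is what was shown above.

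Statement (3) should then follow formally: $G$-invariance of $\hsm{D}(n)$ is the restriction to $\{1\}\times G$ of the $G\times G$-invariance of $\hsm{D}$ (and, $G$ being a group, genuine invariance), while $\hsm{D}(n)$ is the preimage of the closed set $\hsm{D}$ under the continuous map $p\mapsto(n,p)$ of $\bdCone X$ into $\bdCone X\times\bdCone X$, hence closed. The step I expect to demand the most care is the computation of $\repel{\mu}$ in (2): since both the multiplication on $\beta G$ and the antipode are only partially continuous, the identification of $S(\delta_g\cdot\omega)$ must be run through the one-sided continuity statements of Lemma \ref{continuity of product} together with the fact that $S=\iota$ is the continuous extension of $g\mapsto\delta_{g\inv}$, rather than appealed to as a formal ``$S$ reverses products'' rule; everything else is bookkeeping with the Diagonal Principle and with the closedness of $\bdCone X$ in $\hat X$.
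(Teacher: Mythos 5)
Your proposal is correct and follows essentially the same route as the paper: symmetry via the antipode $S$, $G\times G$-invariance by multiplying $\omega$ by principal ultrafilters and evaluating with the Diagonal Principle (the paper does both sides at once with $\nu=\delta_a\cdot\omega\cdot\delta_{b\inv}$, where you iterate one-sided moves with symmetry), closedness from continuity of $\omega\mapsto(\repel{\omega},\attr{\omega})$ on the compact $\beta G$ (you intersect the image with $\bdCone X\times\bdCone X$ where the paper restricts to the closed set of non-principal ultrafilters), and (3) deduced formally from (2). If anything you are more careful than the paper about justifying $S(\delta_g\cdot\omega)=(S\omega)\cdot\delta_{g\inv}$ through the one-sided continuity of the product and the continuity of $\iota$, a step the paper uses implicitly.
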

\begin{proof} Symmetry is clear, as $\connect{\omega}{n}{p}$ is equivalent to $\connect{S\omega}{p}{n}$.

If assertion \ref{D} is true, then $\hsm{D}(n)$ is the projection (to the second coordinate) of the closed set $F=\hsm{D}\cap\{n\}\times\bdCone X$. It is therefore compact, and it is $G$-invariant because $F$ is $\{1\}\times G$-invariant.

We are left to prove \ref{D}. Assume $\connect{\omega}{n}{p}$ and pick $a,b\in G$. We want to show that $b\cdot n$ and $a\cdot p$ are dual. 

Let $\nu=\delta_a\cdot\omega\cdot\delta_{b\inv}$. For any $x\in X$ one has:
\begin{displaymath}
	\ultra{\nu}x=\ultra{\delta_a}\ultra{\omega}\ultra{\delta_{b\inv}}x=
	a\cdot\left(\ultra{\omega}(b\inv\cdot x)\right)=a\cdot\attr{\omega}=a\cdot p\,,
\end{displaymath} 
and at the same time
\begin{displaymath}
	\ultra{S\nu}x=\ultra{\delta_b}\ultra{S\omega}\ultra{\delta_{a\inv}}x=
	b\cdot\left(\ultra{S\omega}(a\inv\cdot x)\right)=b\cdot\repel{\omega}=b\cdot n\,.
\end{displaymath}
Thus we have $\connect{\nu}{b\cdot n}{a\cdot p}$, as desired.

Fix $x\in X$. Recall now (corollary \ref{continuous orbit maps}) that the map $\omega\mapsto\ultra{\omega}x$ is a continuous map of $\beta G$ into $\hat X=X\cup\bdCone X$. Lemma \ref{inversion is continuous} shows that $\omega\mapsto S\omega$ is continuous as well. This implies that the map $f:\beta G\to\hat X\times\hat X$ defined by $f(\omega)=\left(\ultra{\omega}x,\ultra{S\omega}x\right)=\left(\attr{\omega},\repel{\omega}\right)$ is continuous. The set of non-principal elements of $\beta G$ is closed in $\beta G$ and therefore compact, and its image under $f$ is precisely $\hsm{D}$, which concludes the proof.
\end{proof}
Recall that the pair $(n,p)$ is said to be a {\it visible pair} (denoted $(n,p)\in\hsm{V}$) if there is a geodesic line in $X$ with endpoints $n$ and $p$. Given $n\in\bdCone X$, the set of all $p\in\bdCone X$ visible from $n$ is denoted by $\hsm{V}(n)$. %By \cite{[Geoghegan-Ontaneda-almost-geodesically-complete]}, when a proper CAT(0) space $X$ admits a cocompact action by isometries, then $\hsm{V}(n) \neq \emptyset$ for any $n \in \bdCone X$.

Not much can be said about the set $\hsm{V}$ as a subspace of $\bdCone X\times\bdCone X$: the obvious traits are that $\hsm{V}$ is symmetric and invariant under the diagonal action of $G$. Using the language of ultra-filters and $\pi$-convergence we have:
\begin{lemma}\label{lemma:visible pairs} Suppose $\connect{\omega}{n}{p}$ and $q\in\hsm{V}(n)$. Then $\ultra{\omega}q=p$.\qedhere
\end{lemma}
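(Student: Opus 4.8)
The plan is to obtain the lemma as an immediate consequence of $\pi$-convergence (Theorem~\ref{pi-convergence}), the one genuine geometric input being the classical fact that the two endpoints of a geodesic line in a CAT(0) space lie at Tits distance at least $\pi$. Thus the entire argument reduces to checking $\Tits{n}{q}\geq\pi$ and then invoking $\pi$-convergence with $\theta=0$.

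First I would record why $\Tits{n}{q}\geq\pi$. Since $q\in\hsm{V}(n)$, there is a geodesic line $\ell:\RR\to X$ with endpoints $\ell(-\infty)=n$ and $\ell(\infty)=q$. For any $s,t>0$ the points $\ell(-t),\ell(0),\ell(s)$ satisfy $d(\ell(-t),\ell(0))=t$, $d(\ell(0),\ell(s))=s$ and $d(\ell(-t),\ell(s))=s+t$, so their $\EE^2$-comparison triangle is degenerate and the comparison angle at $\ell(0)$ equals $\pi$. Letting $s,t\to 0^+$, the Alexandrov angle at $\ell(0)$ between the two sub-rays of $\ell$ is $\pi$; since the Tits metric dominates the Alexandrov angle at every point of $X$, this gives $\Tits{n}{q}\geq\pi$ (in particular $q\neq n$, so nothing is degenerate).

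With this in hand the conclusion follows mechanically. The hypothesis $\connect{\omega}{n}{p}$ says that $\omega\in\beta G$ is non-principal with $\repel{\omega}=n$ and $\attr{\omega}=p$, so Theorem~\ref{pi-convergence} applies to $\omega$. Feeding in $x=q$ and $\theta=0$, the hypothesis $\Tits{q}{\repel{\omega}}=\Tits{q}{n}\geq\pi=\pi-0$ is exactly what the previous paragraph provides, and the conclusion reads $\Tits{\ultra{\omega}q}{\attr{\omega}}\leq 0$, i.e.\ $\ultra{\omega}q=\attr{\omega}=p$. I do not expect a real obstacle: the only non-formal step is the inequality $\Tits{n}{q}\geq\pi$, which I would justify by the short comparison-angle computation above rather than by citation, since it is the single piece of CAT(0) geometry the proof consumes; everything else is a substitution into $\pi$-convergence.
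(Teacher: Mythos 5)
Your proof is correct and follows the same route as the paper: the paper's argument is precisely that $q\in\hsm{V}(n)$ gives $\Tits{q}{n}\geq\pi$, whence $\pi$-convergence (theorem \ref{pi-convergence}) with $\theta=0$ yields $\Tits{\ultra{\omega}q}{p}=0$. The only difference is that you spell out the standard comparison-angle argument for $\Tits{n}{q}\geq\pi$, which the paper takes as known.
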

\begin{proof} $q\in\hsm{V}(n)$ implies $\Tits{q}{n}\geq\pi$, so $\pi$-convergence (theorem \ref{pi-convergence}) implies $\Tits{\ultra{\omega}q}{p}=0$, as desired.
\end{proof}
The following is a well-known relation between visibility and duality. We re-prove it as an exercise.
\begin{lemma}[lemma 1.5 in \cite{[Ballmann-Buyalo-2pi]}] Let $\xi,\eta\in\bdCone X$ and suppose $\eta\in\hsm{V}(\xi)$. Then $\hsm{D}(\xi)\subseteq\clCone{G\cdot\eta}$.
\end{lemma}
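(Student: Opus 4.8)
The plan is to unwind the definitions of duality and visibility and then apply the visible-pairs lemma. Fix an arbitrary $p\in\hsm{D}(\xi)$; I will show $p\in\clCone{G\cdot\eta}$. By definition of duality there is a non-principal $\omega\in\beta G$ with $\connect{\omega}{\xi}{p}$, i.e.\ $\repel{\omega}=\xi$ and $\attr{\omega}=p$. Since $\eta\in\hsm{V}(\xi)=\hsm{V}(\repel{\omega})$, Lemma \ref{lemma:visible pairs} applies with $n=\xi$, $q=\eta$, and yields $\ultra{\omega}\eta=p$.

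It then remains to observe that $\ultra{\omega}\eta$ always lies in the cone-closure of the orbit $G\cdot\eta$. Indeed, by definition $\ultra{\omega}\eta=\ulim{\omega}{g}T^g(\eta)$ is the $\omega$-limit of the vector $v\in\hat X^{G}$ given by $v_g=g\cdot\eta$; hence every cone-neighbourhood $U$ of $\ultra{\omega}\eta$ satisfies $\omega[v,U]=1$, so in particular $[v,U]\neq\emptyset$ (as $\omega$ is $\{0,1\}$-valued and vanishes on $\emptyset$), which means $U$ meets $G\cdot\eta$. Therefore $\ultra{\omega}\eta\in\clCone{G\cdot\eta}$, and combining with the previous paragraph gives $p\in\clCone{G\cdot\eta}$. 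Since $p$ was an arbitrary element of $\hsm{D}(\xi)$, this proves $\hsm{D}(\xi)\subseteq\clCone{G\cdot\eta}$.

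I do not expect a genuine obstacle here — the statement is a direct consequence of Lemma \ref{lemma:visible pairs} together with the elementary fact that an $\omega$-limit of a net is an accumulation point of its range. The only point requiring a moment's care is the last one: one must use that $\omega$ is a two-valued finitely additive measure vanishing on $\emptyset$, so that each neighbourhood of $\ultra{\omega}\eta$ genuinely contains an orbit point rather than merely being ``large'' in some weaker sense. Alternatively, one can bypass explicit mention of $\omega$-limits altogether by invoking Corollary \ref{continuous orbit maps}: the map $\omega\mapsto\ultra{\omega}\eta$ is continuous on $\beta G$ and restricts to $g\mapsto g\cdot\eta$ on the dense subset $G$, so its image lies in the closure of $\{g\cdot\eta:g\in G\}$ inside the compact Hausdorff space $\hat X$.
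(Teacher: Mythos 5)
Your proof is correct and follows essentially the same route as the paper: take $\omega$ with $\connect{\omega}{\xi}{p}$ (the paper phrases this as applying $S$ to a $\nu$ with $\connect{\nu}{p}{\xi}$, equivalent by the symmetry of duality), apply Lemma \ref{lemma:visible pairs} to get $\ultra{\omega}\eta=p$, and conclude $p\in\clCone{G\cdot\eta}$. Your explicit justification that an $\omega$-limit of the orbit lies in the cone-closure of the orbit is a detail the paper leaves implicit, and both of your suggested justifications are valid.
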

\begin{proof} Given $\xi,\eta$ with $\eta\in\hsm{V}(\xi)$ and $\zeta\in\hsm{D}(\xi)$ we find $\nu\in\beta G$ with $\connect{\nu}{\zeta}{\xi}$. Now apply lemma \ref{lemma:visible pairs} with $\omega=S\nu$, $p=\zeta$, $n=\xi$, $m=\eta$. Then $\connect{\omega}{\xi}{\zeta}$ yields $\zeta=\ultra{\omega}\eta\in\clCone{G\cdot\eta}$. 
\end{proof}
An immediate consequence of this phenomenon has proved particularly useful, and we will be using it repeatedly:
\begin{cor}[lemma 1.6 in \cite{[Ballmann-Buyalo-2pi]}]\label{cor:minsets vs dual sets} Suppose $\xi,\eta\in\bdCone X$ satisfy $\eta\in\hsm{V}(\xi)$. If $\eta$ is contained in a minimal closed $G$-invariant set $M$ of $\bdCone X$ then $M=\hsm{D}(\xi)$.\qedhere
\end{cor}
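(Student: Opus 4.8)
The plan is to combine the previous lemma with the definition of a minimal closed $G$-invariant set. First I would observe that since $\eta\in\hsm{V}(\xi)$, the previous lemma gives $\hsm{D}(\xi)\subseteq\clCone{G\cdot\eta}$. Now if $\eta$ lies in a minset $M$, then $G\cdot\eta\subseteq M$ and, since $M$ is closed in $\bdCone X$, also $\clCone{G\cdot\eta}\subseteq M$; combining these inclusions yields $\hsm{D}(\xi)\subseteq M$.

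For the reverse inclusion I would use minimality of $M$ together with lemma \ref{duality properties}: the set $\hsm{D}(\xi)$ is a closed $G$-invariant subset of $\bdCone X$ (part (3) of that lemma), so once we know it is non-empty and contained in $M$, minimality of $M$ forces $\hsm{D}(\xi)=M$. Thus the only remaining point is to check that $\hsm{D}(\xi)\neq\emptyset$. But $\eta\in\hsm{V}(\xi)$ means there is a bi-infinite geodesic in $X$ joining $\xi$ and $\eta$; feeding any non-principal $\omega\in\beta G$ through $\pi$-convergence as in lemma \ref{lemma:visible pairs} (or simply noting $\Tits{\xi}{\eta}\geq\pi$ and applying theorem \ref{pi-convergence}) produces a point dual to $\xi$, so $\hsm{D}(\xi)$ is non-empty. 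In fact it is cleaner to note directly that $\eta\in\clCone{G\cdot\eta}$ and the previous lemma already used the existence of such $\omega$, so non-emptiness comes for free from the hypothesis $\eta\in\hsm{V}(\xi)$ being non-vacuous.

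There is essentially no obstacle here — this corollary is a two-line consequence of the preceding lemma and the fact that a minset is a \emph{minimal} non-empty closed invariant set. The only thing to be careful about is that the definition of minset requires non-emptiness, so one must not accidentally conclude $\hsm{D}(\xi)=M=\emptyset$; the hypothesis that $\eta$ actually lies in $M$ rules this out. So the proof reads: by the previous lemma $\hsm{D}(\xi)\subseteq\clCone{G\cdot\eta}\subseteq M$ since $M$ is closed and $G$-invariant and contains $\eta$; by lemma \ref{duality properties}(3) $\hsm{D}(\xi)$ is itself closed and $G$-invariant, and it is non-empty (it contains $\ultra{\omega}\eta$ for suitable $\omega$, by lemma \ref{lemma:visible pairs}); hence by minimality of $M$ we get $\hsm{D}(\xi)=M$.
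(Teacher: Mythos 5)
Your overall skeleton is exactly the intended one (the paper treats this as an immediate consequence of the preceding lemma plus minimality): lemma 1.5 gives $\hsm{D}(\xi)\subseteq\clCone{G\cdot\eta}\subseteq M$, and since $\hsm{D}(\xi)$ is closed and $G$-invariant by lemma \ref{duality properties}, minimality of $M$ upgrades the inclusion to equality once $\hsm{D}(\xi)\neq\varnothing$.

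The one step you justify incorrectly is precisely that non-emptiness. Visibility is not duality: the hypothesis $\eta\in\hsm{V}(\xi)$ being ``non-vacuous'' does not by itself produce any $\omega\in\beta G$ with $\repel{\omega}=\xi$, and ``feeding any non-principal $\omega$ through $\pi$-convergence'' does not help either, since theorem \ref{pi-convergence} only speaks about the attracting/repelling points of the particular $\omega$ you feed in --- for a generic $\omega$ the repelling point has nothing to do with $\xi$. Likewise lemma \ref{lemma:visible pairs} \emph{presupposes} an $\omega$ with $\connect{\omega}{\xi}{p}$, i.e.\ it presupposes the very non-emptiness you are trying to establish, so invoking it here is circular. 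What actually gives non-emptiness is the standing assumption that $G$ acts geometrically (so that every boundary point is a radial limit point): for any $\xi\in\bdCone X$ there exists $\omega\in\beta G$ pulling away from $\xi$ (remark following definition \ref{defn:pulling}, using the cocompactness constant $R$), and then $\repel{\omega}=\xi$, so $\attr{\omega}\in\hsm{D}(\xi)$; equivalently, apply theorem \ref{original pi-convergence} to a sequence $(g_m)$ with $g_m\inv(x)\to\xi$, which exists because the action is cocompact. With that replacement your argument is complete; note also that such an $\omega$, combined with lemma \ref{lemma:visible pairs} and the $\beta G$-invariance of $M$, even places the dual point $\ultra{\omega}\eta=\attr{\omega}$ inside $M$ directly.
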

Originally, the last lemmas were used as a tool for proving the following characterization of rank one groups in terms of minimal closed $G$-invariant subsets of $\bdCone X$. We henceforth denote by $\minG$ the set of all minimal non-empty closed $G$-invariant subsets of $\bdCone X$.
\begin{prop}[\cite{[Ballmann-Buyalo-2pi]}, proposition 1.10]\label{prop:Ballmann-Buyalo} Suppose $G$ is a group acting properly by isometries on a proper CAT(0) space $X$. If the limit set of $G$ in $\bdCone X$ equals $\bdCone X$, then the following are equivalent:
\begin{enumerate}
	\item $X$ contains a $G$-periodic rank-$1$ geodesic,
	\item $\bdTits X$ has an isolated point,
	\item Every $n\in\bdTits X$ has some $p\in\bdTits X$ with $\Tits{n}{p}>\pi$,
	\item There exist $n,p\in\bdCone X$ and $M\in\minG$ such that $\Tits{n}{p}>\pi$ and $n\in M$.
\end{enumerate} 
\end{prop}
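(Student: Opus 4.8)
The plan is to prove the cycle of implications $(1)\Rightarrow(2)\Rightarrow(3)\Rightarrow(4)\Rightarrow(1)$, leaning on the duality/visibility machinery developed above together with $\pi$-convergence. First I would handle $(1)\Rightarrow(2)$: if $\ell$ is a $G$-periodic rank-one geodesic with endpoints $n,p\in\bd X$, then $\ell$ does not bound a flat half-plane, and a standard CAT(0) argument (the one underlying Ballmann's original treatment in \cite{[Ballmann-DMV]}) shows that $\Tits{n}{p}>\pi$ and more: $p$ is isolated in $\bdTits X$. Concretely, $n\in\hsm{V}(p)$ trivially, and rank-one-ness forces any point within Tits-distance $\pi$ of $p$ to actually equal $p$ — otherwise one could build a flat half-plane along $\ell$. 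So $(2)$ holds. For $(2)\Rightarrow(3)$: suppose $p$ is isolated in $\bdTits X$; I want every $n$ to have some point at Tits-distance $>\pi$. Using that the $G$-translates of $p$ are dense in $\bd X$ in the cone topology (here we use $\Lambda G=\bd X$ and minimality considerations) and the lower semicontinuity of $d_{_T}$, isolation of $p$ propagates: each $gp$ is isolated with a uniform gap, and one deduces that for any $n$ there is a translate $gp$ with $\Tits{n}{gp}\geq\pi$, which can then be pushed past $\pi$ using $\pi$-convergence (Theorem \ref{pi-convergence}) applied to an ultra-filter with $n$ in its repelling-far region.

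For $(3)\Rightarrow(4)$ the idea is: given $(3)$, pick any $n\in\bd X$ and, via $(3)$, a point $q$ with $\Tits{n}{q}>\pi$. Let $M\in\minG$ be a minset contained in $\clCone{G\cdot q}$ — such $M$ exists by a Zorn's-lemma argument on nonempty closed $G$-invariant subsets of the compactum $\bdCone X$. I then need $m\in M$ and $n'\in\bd X$ with $\Tits{n'}{m}>\pi$. The point is that the strict inequality $\Tits{n}{q}>\pi$ is an \emph{open} condition in a suitable sense (again lower semicontinuity), so it persists for points near $q$, in particular for a suitable translate landing in $M$; alternatively one uses that $q\in\hsm{V}(n)$ would give $M=\hsm{D}(n)$ by Corollary \ref{cor:minsets vs dual sets}, and then duality combined with $\pi$-convergence transfers the excess angle. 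The cleanest route is: by $\pi$-convergence, if $\Tits{n}{q}>\pi$ then choosing $\omega$ with $\repel{\omega}=n$ we get $\ultra{\omega}q=\attr{\omega}=:p'$, and by a symmetric argument get a point in $M$ realizing angle $>\pi$ with some limit point; I would make this precise by a compactness/continuity argument on $\beta G$ as in the proof of Lemma \ref{duality properties}(2).

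The genuinely hard implication, and the main obstacle, is $(4)\Rightarrow(1)$ — producing an actual $G$-periodic rank-one \emph{geodesic} from the purely asymptotic data ``$\Tits{n}{p}>\pi$ with $n\in M$''. Here is where I expect to invest the real work. The strategy: from $\Tits{n}{p}>\pi$ one first wants a geodesic line in $X$ from $n$ to $p$ — this requires an Eberlein-type duality/visibility argument, using that $M=\hsm{D}(\xi)$ for appropriate $\xi$ and that $p$ is dual to $n$, hence (by the visibility-vs-duality lemmas above) $p\in\clCone{G\cdot(\text{a visible point})}$; combined with properness of $X$ and a limiting argument on geodesic segments $[g_kx, g_k^{-1}x]$ one extracts a bi-infinite geodesic $\ell$ with endpoints in the $G$-orbit closures of $n$ and $p$. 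Then one must upgrade $\ell$ to be (i) rank one — if $\ell$ bounded a flat half-plane $H$, the boundary arc $\bd H$ would be a Tits-geodesic of length $\pi$ joining the endpoints, forcing $\Tits{n}{p}\leq\pi$, contradiction (this uses that a flat half-plane's boundary is an isometrically embedded arc of length exactly $\pi$ in $\bdTits X$) — and (ii) $G$-periodic, which is the subtlest point: one uses minimality of $M$ plus the rank-one property (rank-one geodesics have the ``contracting'' property, so the North-South dynamics of duality elements can be leveraged, à la the closing-lemma philosophy) to find $g\in G$ translating $\ell$ to within bounded Hausdorff distance of itself, and then a standard rank-one argument (axis of $g$) gives the periodic rank-one geodesic. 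I would flag that some of this is essentially the content of Ballmann--Buyalo's Proposition 1.10 and cite \cite{[Ballmann-Buyalo-2pi]} for the details of (4)$\Rightarrow$(1), since that equivalence is where the heavy CAT(0) geometry lives, while $(1)\Leftrightarrow(2)\Leftrightarrow(3)$ is where the ultra-filter reformulation streamlines the original argument.
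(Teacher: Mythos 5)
The paper does not actually prove this proposition: it is quoted verbatim (as ``\cite{[Ballmann-Buyalo-2pi]}, proposition 1.10'') and used later as a black box, e.g.\ to say that higher rank is equivalent to $\core\neq\varnothing$. So there is no in-paper argument to compare against, and your closing instinct -- that the heavy CAT(0) geometry, in particular $(4)\Rightarrow(1)$, should simply be cited from Ballmann--Buyalo -- is in effect what the authors do, except that they cite the entire equivalence rather than re-deriving any part of it.

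That said, the portions you do attempt have concrete gaps. In $(2)\Rightarrow(3)$, isolation of $p$ in $\bdTits X$ only gives $\Tits{p}{\xi}\geq\pi$ for every $\xi\neq p$ (if $\Tits{p}{\xi}<\pi$, the Tits geodesic from $\xi$ to $p$ would contain points arbitrarily Tits-close to $p$); getting the \emph{strict} inequality required in (3), for \emph{every} $n$, is the real content, and it cannot be obtained by ``pushing past $\pi$ using $\pi$-convergence'': every operator $\ultra{\omega}$ is $1$-Lipschitz on $\bdTits X$, so these maps only contract Tits distances and can never create a distance exceeding $\pi$ out of one equal to $\pi$. Your density claim is also unjustified -- $\Lambda G=\bd X$ does not make $G\cdot p$ cone-dense in $\bd X$; one only knows $\clCone{G\cdot p}$ contains some element of $\minG$. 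In $(1)\Rightarrow(2)$ the flat-half-plane construction genuinely uses the $G$-periodicity of the axis (endpoints of a non-periodic rank-one geodesic need not be Tits-isolated), and your sketch never says where periodicity enters. Conversely, $(3)\Rightarrow(4)$ needs none of the transfer machinery you describe: take any $M\in\minG$ (which exists by Zorn's lemma and compactness of $\bdCone X$), any $n\in M$, and apply (3) to that $n$. As written, then, the proposal is not a proof but a partial sketch whose genuinely new steps either fail (the $\pi$-convergence ``push'') or are left at the level of the very citation the paper already relies on.
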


%CONSTRUCTION WORK:
\begin{defn}[Extreme points, Core] A point $p\in\bd X$ is said to be {\it $\epsilon$-extreme}, if $\bd X\minus\Titsball{p}{\pi+\epsilon}\neq\varnothing$. The point $p$ is {\it extreme}, if it is $\epsilon$-extreme for some $\epsilon>0$. We shall refer to the set $\core$ of non-extreme points of $\bdTits X$ as {\it the core of $\bdTits X$}.
\end{defn}
In this language, $G$ has higher rank if and only if $\core$ is non-empty, if and only if every $M\in\minG$ is contained in the core of $\bdTits X$. Also note that since $\beta G$ acts on $\bdTits X$ by $1$-Lipschitz maps, $\core$ is cone-closed, and therefore $\beta G$-invariant.

This property implies the following, more careful application of $\pi$-convergence to the notion of extremality:
\begin{lemma}\label{quantified extremality} Let $n,p\in\bd X$ and $\epsilon>0$. If $p$ is $\epsilon$-extreme and $n\in\hsm{D}{p}$ then every $z\in\bd X\minus\Titsball{n}{\pi-\epsilon}$ is extreme.
\end{lemma}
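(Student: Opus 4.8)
The plan is to combine $\pi$-convergence with the $\beta G$-invariance of the core, which was noted just above the statement. Since $n\in\hsm{D}(p)$, by definition of duality there is a non-principal $\omega\in\beta G$ with $\connect{\omega}{n}{p}$, that is, $\repel{\omega}=n$ and $\attr{\omega}=p$. Now take any $z\in\bd X\minus\Titsball{n}{\pi-\epsilon}$, so that $\Tits{z}{\repel{\omega}}\geq\pi-\epsilon$. Applying Theorem \ref{pi-convergence} with $\theta=\epsilon$ and $x=z$ yields $\Tits{\ultra{\omega}z}{p}\leq\epsilon$; in other words, $\ultra{\omega}z$ lands inside the Tits-ball of radius $\epsilon$ about $p$.

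Next I would exploit the $\epsilon$-extremality of $p$: choose $w\in\bd X$ with $\Tits{p}{w}>\pi+\epsilon$. The triangle inequality for the (lower semi-continuous) Tits metric then gives
\[
\Tits{\ultra{\omega}z}{w}\;\geq\;\Tits{p}{w}-\Tits{p}{\ultra{\omega}z}\;>\;(\pi+\epsilon)-\epsilon\;=\;\pi,
\]
so $w\notin\Titsball{\ultra{\omega}z}{\pi+\epsilon'}$ for $\epsilon'=\tfrac12\bigl(\Tits{\ultra{\omega}z}{w}-\pi\bigr)>0$. Hence $\ultra{\omega}z$ is extreme.

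Finally I would transfer extremality back from $\ultra{\omega}z$ to $z$. As recalled just above the statement, $\core$ is cone-closed and $G$-invariant, hence $\beta G$-invariant: for any $y\in\core$ the point $\ultra{\omega}y$ is a cone-limit of the orbit $\{gy:g\in G\}\subseteq\core$, so $\ultra{\omega}y\in\cl{\core}=\core$. Contrapositively, since $\ultra{\omega}z\notin\core$, we must have $z\notin\core$; that is, $z$ is extreme, which is what we wanted.

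The argument is short, and I do not anticipate a genuine obstacle — it is merely a chaining of Theorem \ref{pi-convergence}, the triangle inequality, and the invariance of $\core$. The one point to keep straight is the direction in which the $\beta G$-invariance of $\core$ is used: one needs that each $\ultra{\omega}$ carries non-extreme points to non-extreme points (equivalently $\ultra{\omega}(\core)\subseteq\core$), and this is exactly what cone-closedness together with $G$-invariance delivers, since the $\omega$-limit of any orbit lies in the cone-closure of that orbit.
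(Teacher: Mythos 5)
Your proposal is correct and follows essentially the same route as the paper: produce $\omega$ with $\connect{\omega}{n}{p}$, use $\pi$-convergence to force $\Tits{\ultra{\omega}z}{p}\leq\epsilon$, the triangle inequality against a witness of $\epsilon$-extremality to make $\ultra{\omega}z$ extreme, and then the $\beta G$-invariance of $\core$ to transfer extremality back to $z$. The only cosmetic difference is that the paper fixes a margin $\delta>0$ with $\Tits{p}{q}>\pi+\epsilon+\delta$ in advance, whereas you extract the positive excess over $\pi$ afterwards; both are fine.
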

\begin{proof} We have $\Tits{p}{q}>\pi+\epsilon$ for some $q\in\bd X$. Pick $\delta>0$ such that $\Tits{p}{q}>\pi+\epsilon+\delta$. Now find $\omega\in\beta G$ with $\connect{\omega}{n}{p}$ and consider a point $z\in\bd X$ with $\Tits{z}{n}>\pi-\epsilon$. Applying $\pi$-convergence we obtain $\Tits{\ultra{\omega}z}{p}\leq\epsilon$, but then 
\begin{displaymath}
	\Tits{\ultra{\omega}z}{q}\geq\Tits{p}{q}-\Tits{\ultra{\omega}z}{p}\geq\pi+\delta>\pi\,,
\end{displaymath}
meaning $\ultra{\omega}z$ is extreme. Since $\core$ is $\beta G$ invariant, $z$ must itself be extreme, as desired.
\end{proof}
Papasoglu and Swenson prove in \cite{[Papasoglu-Swenson-JSJ]}, that if a higher rank group has no fixed point in $\bd X$ then every $M\in\minG$ satisfies the property that every point of $\bd X$ is at most $\pi/2$ away from a point of $M$. Since $M\subset\core$, they conclude that $\diam\bdTits X\leq 3p/2$. 

The requirement that $G$ not fix a point of $\bd X$ is a necessary one. Consider the example of $F_2\times\ZZ$ acting on the product of the $4$-regular tree with the real line has the spherical suspension of a Cantor set for its boundary. the suspension points are fixed, while every horizontal slice of this suspension is an element of $\minG$. However, only one of these slices is $\pi/2$-away from every point of $\bd X$.

Revisiting the argument in \cite{[Papasoglu-Swenson-JSJ]}, one sees that if any $M\in\minG$ has circumradius$<\pi/2$ then $G$ has a fixed point on $\bd X$. They then apply a result of Kim Ruane's to conclude that $G$ is virtually a product of the form $H\times\ZZ$. This results in $\bdTits X$ having diameter exactly $\pi$ (which is less than $3\pi/2$).

Keeping this picture in mind, we would like at this point to present an application of theorem A to the problem of (slightly) improving the $3\pi/2$ bound on the diameter.
%NEW DIAM BOUND
\begin{thm}\label{new diameter bound} Suppose $G$ is a higher rank CAT(0) group acting geometrically on a CAT(0) space $X$, and let $d\geq 1$ denote the geometric dimension of $\bdTits X$. Then $\diam\bdTits X\leq 2\pi-\arccos\left(-\frac{1}{d+1}\right)$.
\end{thm}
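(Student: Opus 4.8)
The plan is to combine Theorem A with the standard angle-comparison argument that Papasoglu--Swenson used to get the $3\pi/2$ bound, and to squeeze out the improvement by exploiting the fact that the sphere $\SS=\bd F$ supplied by Theorem A is a genuine round $d$-sphere of dimension equal to the geometric dimension of $\bdTits X$. Concretely: suppose for contradiction that there are points $\xi,\eta\in\bdTits X$ with $\Tits{\xi}{\eta}>2\pi-\arccos\!\left(-\tfrac{1}{d+1}\right)$. Applying Theorem A to a maximal flat $F_0$, we obtain $\omega\in\beta G$ and a $(d+1)$-flat $F$ with $\ultra{\omega}(\bd X)=\SS=\bd F$. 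Since $\ultra{\omega}$ is $1$-Lipschitz on $\bdTits X$, the images $\bar\xi=\ultra{\omega}\xi$ and $\bar\eta=\ultra{\omega}\eta$ still lie in $\SS$ and satisfy $\Tits{\bar\xi}{\bar\eta}\geq$ (some large value); but inside a round $d$-sphere Tits distances are at most $\pi$, which already bounds things. The real content is not inside $\SS$ but in how far other points of $\bdTits X$ can be from $\SS$.

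The key step I would carry out is a lemma of the following shape: \emph{every point $z\in\bd X$ is at Tits distance at most $\arccos\!\left(-\tfrac{1}{d+1}\right)$ from the round sphere $\SS=\bd F$} — equivalently, $\SS$ is $\tfrac{\pi}{2}$-dense when $d=1$ and gets only slightly worse as $d$ grows. To prove this, fix $z\in\bd X$ and apply $\pi$-convergence (Theorem \ref{pi-convergence}) to $\omega$: writing $n=\repel{\omega}$, for $\theta$ determined by $\Tits{z}{n}=\pi-\theta$ we get $\Tits{\ultra{\omega}z}{\attr{\omega}}\leq\theta$, and $\ultra{\omega}z\in\SS$. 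So the distance from $z$ to $\SS$ is governed by how close $z$ can be forced to be to $n=\repel{\omega}$ in the Tits metric, which is where the dimension bound enters: if $z$ were too far from all of $\SS$, then in particular $z$ together with a suitable $d$-dimensional simplex of points spanning $\SS$ (using that $\SS$ is a round $d$-sphere, hence contains $d+2$ points pairwise at Tits distance $\arccos\!\left(-\tfrac{1}{d+1}\right)$ — the vertices of a regular spherical simplex) would violate the fact that $\bdTits X$ has geometric dimension exactly $d$; adding a $(d+2)$-nd point too far from the others would produce a configuration forcing geometric dimension $\geq d+1$, by a CAT(1) packing/comparison estimate. This is essentially a quantitative version of ``you cannot fit $d+2$ mutually-obtuse directions into a $d$-dimensional CAT(1) space.''

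The remaining step is routine: given the $(d+1)$-flat $F$ and any $\xi,\eta$, route a path from $\xi$ through $\SS$ to $\eta$ — go from $\xi$ to a nearest point $\xi'\in\SS$ (distance $\leq\alpha:=\arccos\!\left(-\tfrac{1}{d+1}\right)$), cross $\SS$ from $\xi'$ to a nearest point $\eta'$ of $\SS$ to $\eta$ (distance $\leq\pi$ inside the round sphere), then from $\eta'$ to $\eta$ (distance $\leq\alpha$) — giving $\Tits{\xi}{\eta}\leq 2\alpha+\pi$? No: one does better by noting that $\SS\subseteq\core$ is forbidden from having antipodal-at-distance$>\pi$ behavior, and by routing more carefully one gets $\Tits{\xi}{\eta}\leq\pi+\alpha=2\pi-\alpha$ after using $\alpha<\pi$; I would organize this last estimate by bounding $\Tits{\xi}{\eta}\leq\Tits{\xi}{\xi'}+\Tits{\xi'}{\eta}$ and applying the density lemma together with $\diam_{_{\mathrm T}}\SS=\pi$ and the triangle inequality, choosing $\xi'$ on a Tits-geodesic from $\xi$ towards $\eta$. \textbf{The main obstacle} is the packing lemma in the previous paragraph: making precise, in an arbitrary finite-dimensional CAT(1) space of geometric dimension $d$, the statement that the presence of a round $d$-sphere plus a point at Tits distance $>\arccos\!\left(-\tfrac{1}{d+1}\right)$ from it forces geometric dimension $>d$. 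I expect this to follow from the relation between geometric dimension and the existence of isometrically embedded spherical simplices (via the join-decomposition Theorem \ref{join decomposition} applied to a small Tits ball, or via Kleiner's characterization of geometric dimension), but pinning down the exact constant $\arccos\!\left(-\tfrac{1}{d+1}\right)$ — the circumradius of the regular spherical $d$-simplex — is the delicate part.
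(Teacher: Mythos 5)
Your argument has two genuine gaps, and the second one is fatal even if the first were repaired. First, the key ``packing/density lemma'' you rely on -- that in a finite-dimensional CAT(1) space of geometric dimension $d$, a round $d$-sphere together with a point at Tits distance greater than $\arccos\left(-\frac{1}{d+1}\right)$ from it forces geometric dimension $>d$ -- is not only unproved but false as stated: glue an arc of arbitrary length $L$ to a round $d$-sphere at one point. The result is still CAT(1) of geometric dimension $d$ (no new short loops, no higher-dimensional pieces), yet the free endpoint is at distance $L$ from the sphere. Any correct version must use the group action (duality, minsets, $\pi$-convergence), and your sketch never does; note also that $\pi$-convergence controls $\Tits{\ultra{\omega}z}{\attr{\omega}}$, i.e.\ the position of the \emph{image} of $z$ in $\SS$, not the Tits distance from $z$ itself to $\SS$. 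Second, even granting a density constant $\alpha=\arccos\left(-\frac{1}{d+1}\right)$, your concluding triangle-inequality routing gives at best $\Tits{\xi}{\eta}\leq\pi+\alpha$, and your correction rests on the identity $\pi+\alpha=2\pi-\alpha$, which holds only for $\alpha=\pi/2$; since $\alpha>\pi/2$, the bound $\pi+\alpha$ is strictly weaker than the target $2\pi-\alpha$ (for $d=1$ it equals $5\pi/3$, worse than the known $3\pi/2$). No routing through $\SS$ of this kind can produce the stated bound.

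The paper's proof is structured quite differently, and the constant enters through a different mechanism. Assuming some point $p$ is $\epsilon$-extreme, one takes a minset $M\in\minG$ inside $\hsm{D}(p)$ and looks at its trace $N=M\cap\SS$ on the sphere from Theorem A (nonempty since $\ultra{\omega}M\subseteq M\cap\SS$). Lemma \ref{quantified extremality} plus $N\subset\core$ forces $\diam N\leq\pi-\epsilon$. The $\pi/2$-density of minsets (theorem 23 of \cite{[Papasoglu-Swenson-JSJ]}), pulled back through the isometry $\bd F_0\to\SS$, shows that the closed hemispheres of $\SS$ centered at points of $N$ cover $\SS$. Finally, the spherical Jung theorem says that a subset of the round $d$-sphere of diameter less than $\arccos\left(-\frac{1}{d+1}\right)$ has circumradius less than $\pi/2$, so its circumcenter's antipode would be uncovered -- a contradiction unless $\pi-\epsilon\geq\arccos\left(-\frac{1}{d+1}\right)$, i.e.\ $\diam\bdTits X\leq 2\pi-\arccos\left(-\frac{1}{d+1}\right)$. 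So the dimension-dependent constant is the Jung constant of $\SS^d$ applied to $N$, not a bound on how far points of $\bd X$ can sit from $\SS$; that latter quantity is never estimated in the paper and does not need to be.
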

\begin{proof} Fix a $(d+1)$-flat $F_0$ and let $F,\SS$ and $\omega$ be as provided by theorem A.

Suppose $\bdTits X$ contains an $\epsilon$-extreme point $p$ for some $\epsilon>0$. Since $\hsm{D}(p)$ is $G$-invariant and cone-closed, $\hsm{D}(p)$ contains some $M\in\minG$. 

We consider the set $N=M\cap\SS$. Since $\ultra{\omega}M\subseteq M$ and $\ultra{\omega}M\subset\SS$, the set $N$ is non-empty and closed. Moreover, applying lemma \ref{quantified extremality} we conclude that $\diam N\leq\pi-\epsilon$, since $N\subset\core$.

For each $n\in N$ let $H_n$ denote the closed hemisphere of $\SS$ centered at the point $n$. For any point $z\in\SS$ find a point $z'\in\bd F_0$ satisfying $\ultra{\omega}z'=z$ and a point $m\in M$ with $\Tits{m}{z'}\leq\pi/2$. Then the point $n=\ultra{\omega}m$ lies in $N$ and we conclude that $z\in H_n$. Thus, the collection $\{H_n\}_{n\in N}$ covers $\SS$.

By classical geometry of round spheres, if $\pi-\epsilon<\arccos\left(-\frac{1}{d+1}\right)$ then the circumradius of $N$ in $\SS$ is less than $\pi/2$. In this case $N$ will have a circumcenter $z$ in $\SS$, implying that the antipode of $z$ on $\SS$ is not covered by any of the hemispheres $H_n$ -- a contradiction. Thus, no point of $\bd X$ is $\epsilon$-extreme for $\epsilon=\pi-\arccos\left(-\frac{1}{d+1}\right)$. Equivalently, the diameter of $\bdTits X$ cannot exceed $2\pi-\arccos\left(-\frac{1}{d+1}\right)$.
\end{proof}
\begin{remark} The function $2\pi-\arccos\left(-\frac{1}{d+1}\right)$ is an increasing function of $d$, with a value of $4\pi/3$ for $d=1$ and a limit of $3\pi/2$ as $d\to\infty$. For now, in order to obtain a better bound than $3\pi/2$ on $\diam\bdTits X$ one needs to restrict the geometric dimension of $X$.
\end{remark}

\subsection{Pulling and Suspensions} We are now after a sharpened form of lemma \ref{lemma:visible pairs}. Given a geodesic ray $\gamma:[0,\infty)\to X$ and a point $x\in X$, let $\hsm{A}_{x,\gamma,C}$ denote the family of all sets of the form
\begin{displaymath}
	A_{x,\gamma,M,C}=\left\{g\in G\left|g\cdot x\in T_{\gamma,M,C}\right.\right\}\,,
\end{displaymath}
where $T_{\gamma,M,C}=\nbd{C}\left(\gamma\left([M,\infty)\right)\right)$, with $M\in(0,\infty)$.

\includegraphics[keepaspectratio, scale =1.7]{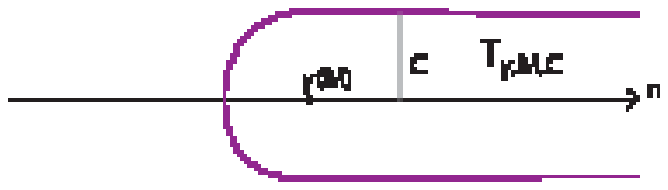}

\begin{defn}[Pulling]\label{defn:pulling} We say that $\omega\in\beta G$ pulls away from a point $n\in\bdCone X$ if there exist a point $x\in X$, a ray $\gamma$ with $\gamma(\infty)=n$ and $C>0$ such that $(S\omega) A=1$ for all $A\in\hsm{A}_{x,\gamma,C}$.
\end{defn}

\begin{remark}\label{pulling is independent of base point} The definition of pulling is independent of the choice of the point $x$: at the price of replacing $C$ by $C+d(x,y)$ one can always replace the point $x$ in the definition by any point $y\in X$.
\end{remark}
\begin{remark} It is easy to see that $\hsm{A}_{x,\gamma,C}$ forms a chain of non-empty sets whenever $C>R$ (where $R$ is as defined in the beginning of the section). In particular $\hsm{A}_{x,\gamma,C}$ has FIP (given $C>R$).

%for $M_1<M_2$, the inclusion $T_{\gamma,M_2,C}\subseteq T_{\gamma,M_1,C}$ implies $A_{x,\gamma,M_2,C}\subseteq A_{x,\gamma,M_1,C}$; since $A_{x,\gamma,M,C}$ is non-empty for all $M$ (any ball $\ball{y}{C}$, $y\in X$ intersects $G\cdot x$, by the choice of $R$). 

This means that ultra-filters pulling from $n$ do exist for any $n\in\bdCone X$, provided $G$ acts geometrically on $X$. 

Also note that any cone neighbourhood of $n=\gamma(\infty)$ contains $T_{\gamma,M,C}$ for $M$ large enough, so that $\attr{S\omega}=n$ is guaranteed to hold whenever $\omega$ pulls from $n$. 

Thus, one should think of an ultra-filter $\omega$ pulling from a point $n\in\bdCone X$ as a means of discussing radial convergence of $G\cdot x$ to $n$. If the action of $G$ on $X$ is only proper, then it would be sensible to define {\it radial limit points of $G$} as points $n\in\bdCone X$ admitting an $\omega\in\beta G$ pulling from $n$.
\end{remark}

\begin{lemma}[Pulling lines]\label{pulling lines} Suppose $\omega\in\beta G$ pulls from a point $n\in\bdCone X$ and $\connect{\omega}{n}{p}$. Then $\ultra{\omega}n\in\hsm{V}(p)$ and $\hsm{V}(n)\neq\varnothing$.
\end{lemma}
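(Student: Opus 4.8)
The plan is to unwind Definition~\ref{defn:pulling} into a concrete family of geodesic rays which, pushed forward by the elements picked out by $\omega$, converges to the desired bi-infinite geodesic. By Remark~\ref{pulling is independent of base point} fix the base point of the pulling data to be some $x\in X$; let $\gamma$ be the accompanying ray, $\gamma(\infty)=n$, and $C>0$ the accompanying constant. By the very definition of the antipode, $(S\omega)A=1$ for all $A\in\hsm{A}_{x,\gamma,C}$ is the same as $\omega A\inv=1$ for all such $A$, i.e.\ for every $M>0$ the set
\begin{displaymath}
	B_M=\left\{h\in G\,:\,d\bigl(h\inv x,\,\gamma([M,\infty))\bigr)\le C\right\}
\end{displaymath}
lies in $\omega$. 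The $B_M$ decrease with $M$ and $\bigcap_M B_M=\varnothing$ (for fixed $h$ one has $d(h\inv x,\gamma(t))\to\infty$), so setting $t_h=\sup\{t\ge 0:d(h\inv x,\gamma(t))\le C\}$ for $h\in B_1$ (and $t_h=0$ otherwise) gives $d(h\inv x,\gamma(t_h))\le C$ for $\omega$-almost every $h$, while $\{h:t_h\ge M\}\supseteq B_M\in\omega$ forces $\ulim{\omega}{h}t_h=\infty$. Applying the isometry $h$, the ray $h\gamma$ — which has $h\gamma(0)=hx$ and $h\gamma(\infty)=h\cdot n$ — passes within $C$ of $x$ at parameter $t_h$, since $d(x,h\gamma(t_h))=d(h\inv x,\gamma(t_h))\le C$.

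Next I would reparametrize and take an $\omega$-limit. Put $\sigma_h(s)=h\gamma(t_h+s)$, $s\in[-t_h,\infty)$, a unit-speed geodesic with $\sigma_h(0)=h\gamma(t_h)\in\cl{\ball{x}{C}}$. Since $t_h\to\infty$ along $\omega$, for each $N$ the restrictions $\sigma_h|_{[-N,N]}$ are defined for $\omega$-almost all $h$ and lie in the compact Hausdorff space of geodesic segments $[-N,N]\to X$ centred in $\cl{\ball{x}{C}}$ (Arzel\`a--Ascoli; $X$ is proper); their $\omega$-limits cohere into a unit-speed geodesic line $\sigma\colon\RR\to X$ with $\sigma(0)\in\cl{\ball{x}{C}}$. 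One then reads off the ideal endpoints using that $\omega$-limits commute with continuous maps (Lemma~\ref{ultralimits commute with continuous maps}) and that in a proper CAT(0) space a geodesic depends continuously on its two endpoints (one in $X$, one in $\hat X$). The forward ray $\sigma|_{[0,\infty)}$ is the $\omega$-limit of the rays $\bigl[\sigma_h(0),h\cdot n\bigr)=\sigma_h|_{[0,\infty)}$, hence the ray $\bigl[\sigma(0),\ultra{\omega}n\bigr)$ because $\ulim{\omega}{h}\sigma_h(0)=\sigma(0)$ and $\ulim{\omega}{h}h\cdot n=\ultra{\omega}n$; so $\sigma(\infty)=\ultra{\omega}n$. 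The backward piece $\sigma|_{(-\infty,0]}$ is the $\omega$-limit of the segments $\bigl[\sigma_h(0),\sigma_h(-t_h)\bigr]=\bigl[\sigma_h(0),hx\bigr]$, whose lengths $t_h\to\infty$ and whose far ends satisfy $hx\to\attr{\omega}=p$ in the cone topology (as $\ultra{\omega}x=\attr{\omega}$ for $x\in X$); hence it is the ray $\bigl[\sigma(0),p\bigr)$, so $\sigma(-\infty)=p$. Thus $\sigma$ joins $p$ to $\ultra{\omega}n$, establishing $\ultra{\omega}n\in\hsm{V}(p)$.

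For the remaining assertion I would not use the pulling data, only coarse geodesic completeness. With $R$ as fixed at the start of the section (Geoghegan--Ontaneda \cite{[Geoghegan-Ontaneda-almost-geodesically-complete]}), for each $m\in\NN$ pick a ray $\gamma_m$ emanating from $\gamma(m)$ and meeting $\ball{x}{R}$, say $\gamma_m(s_m)\in\ball{x}{R}$ with $s_m\in[m-R,m+R]$. Reparametrizing $\hat\gamma_m(s)=\gamma_m(s_m+s)$, $s\in[-s_m,\infty)$, one has $\hat\gamma_m(0)\in\cl{\ball{x}{R}}$ and $s_m\to\infty$, so a subsequence of $(\hat\gamma_m)$ converges (Arzel\`a--Ascoli) to a unit-speed geodesic line $L$. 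Exactly as before, $L|_{[0,\infty)}$ is a ray to some $\eta\in\bdCone X$, while $L|_{(-\infty,0]}$ is the limit of the segments from $\hat\gamma_m(0)$ to $\gamma_m(0)=\gamma(m)$, of lengths $s_m\to\infty$ and with far ends $\gamma(m)\to n$; hence $L(-\infty)=n$, and $L$ shows $\hsm{V}(n)\ne\varnothing$.

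The step I expect to be the crux is the identification of the endpoints of $\sigma$: one must be sure the $\omega$-limit of the $\sigma_h$ is a genuine bi-infinite geodesic — neither collapsed nor broken at $\sigma(0)$ — and that its two halves limit onto the rays to $\ultra{\omega}n$ and to $p$, the latter being the subtle one since the points $hx$ leave every compact set. This relies on the compactness of the pertinent spaces of geodesic segments (so the $\omega$-limits exist there), on the continuity of geodesics in their endpoints in a proper CAT(0) space, and on the trivial-but-essential point that each $\sigma_h$ is a single geodesic, which is what prevents a bend from forming at $\sigma(0)$ in the limit.
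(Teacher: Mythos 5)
Your proposal is correct. For the first assertion it is essentially the paper's own argument in different packaging: the paper recenters the translated rays $g\cdot\gamma$ at the closest-point-projection parameter $s(g)$ of $g\inv x$ onto $\gamma$ and takes pointwise ultralimits $\ell(t)=\ulim{\omega}{g}\ell_g(t)$, identifying the endpoints by exhibiting full-$\omega$-measure sets of fellow-travelling rays, whereas you recenter at your $t_h$ and reach the same line $\sigma$ via compactness of the spaces of geodesic segments and continuity of geodesics in their endpoints (one in $X$, one in $\hat X=X\cup\bdCone X$) in a proper CAT(0) space; these devices are interchangeable, and your implicit normalization $\gamma(0)=x$ is harmless by Remark~\ref{pulling is independent of base point}. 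For the second assertion your route is genuinely different and more elementary: the paper stays inside the ultrafilter formalism, choosing for each $g$ a Geoghegan--Ontaneda ray $\mu_g$ from $x$ through $\ball{g\inv\cdot\gamma(0)}{R}$, translating by $g$ so the initial points $g\cdot x$ tend to $n$ along $S\omega$, and taking an $S\omega$-limit after recentering; you instead discard the ultrafilter and the pulling data entirely and run the same limiting argument on the plain sequence of rays shot from $\gamma(m)$ back through $\ball{x}{R}$, extracting a subsequential limit line $L$ with $L(-\infty)=n$. Your version makes transparent that $\hsm{V}(n)\neq\varnothing$ for every $n\in\bdCone X$ needs only properness and almost geodesic completeness (which is the content of the remark following the lemma), while the paper's version keeps the construction uniform with the section's ultrafilter bookkeeping, the same $S\omega$ that witnesses pulling driving the limit. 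Both arguments are sound, and the crux you flag -- that the limit of the $\sigma_h$ is an unbroken bi-infinite geodesic whose backward half converges onto the ray to $p$ even though the far ends $h\cdot x$ leave every compact set -- is handled correctly by the boundedness of $d(x,\sigma_h(0))$, the divergence $t_h\to\infty$ along $\omega$, and the standard endpoint-continuity facts you cite.
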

\begin{remark} If $G$ acts geometrically on $X$, every point of $\bd X$ has some $\omega\in\beta G$ pulling away from it. We conclude that every point of $\bd X$ is visible from some other point of $\bd X$.
\end{remark}
\begin{proof} First we note that $p=\ultra{\omega}x=\ultra{\omega}\gamma(0)$. Fix $C\geq R$ as in the definition of pulling. For every $s\in\RR$ we define an operator $P_s:X^{[0,\infty)}\to X^{\RR}$ by setting $(P_sf)(t)=f(0)$ for $t\leq -s$ and $P_sf(t)=f(t+s)$ for $t\geq -s$.

For every $g\in G$, let $s(g)$ denote the real number $s$ such that $\gamma(s)$ is the closest point projection of $g\inv\cdot x$ onto $\gamma$. 

\includegraphics[keepaspectratio, scale =1.8]{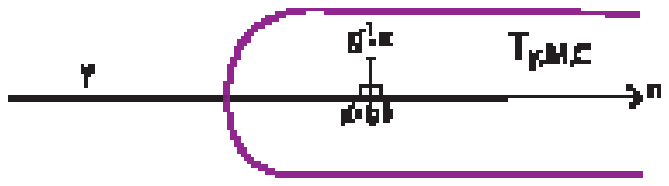}

Define a function $\ell_g:\RR\to X$ by setting $\ell_g=P_{s(g)}(g\cdot\gamma)$, and set $\ell(t)=\ulim{\omega}{g}\ell_g(t)$. We claim that $\ell$ is a geodesic line in $X$ with $\ell(-\infty)=p$ and $\ell(\infty)=\ultra{\omega}n$.

First observe that $\ell(t)\in X$ for all $t\in\RR$: for each $g\in G$, $\ell_g(0)$ belongs to the closed ball of radius $r_g=d(g\inv\cdot x,\gamma)$ about $x$, and we have $r_g\leq C$ for a set of elements in $G$ having full $\omega$-measure. As a result, $\ell_g(t)$ belongs to the closed ball of radius $C+t$ about $x$ for a set of $g\in G$ of full $\omega$-measure, which proves $\ell(t)$ must belong to this same ball.

To prove that $\ell$ is a geodesic, it suffices to show that for any reals $t_1<t_2$, the set of $g\in G$ satisfying $d\left(P_{s(g)}(g\cdot\gamma)(t_1),P_{s(g)}(g\cdot\gamma)(t_2)\right)=t_2-t_1$ has full $\omega$-measure. For this it suffices to show that the set of $g\in G$ for which $-s(g)\leq\min\{t_1,t_2\}$ has full $\omega$-measure, but this is guaranteed by $\omega$ pulling from $n$.

We are left to prove that $\ell(\infty)=\ultra{\omega}n$ and $\ell(-\infty)=p$.

Let $A$ be the set of all $g\in G$ such that $d(\ell(0),g\cdot\gamma(s(g)))<2C$ and let $B_M$ be the set of all $g\in G$ such that $d(\ell(M),g\cdot\gamma(s(g)+M))<2C$. By the definition of an $\omega$-limit, we have $\omega A=1$ and $\omega B_M=1$, implying $\omega(A\cap B_M)=1$. Thus, for every $M>0$ the ray $g\cdot\gamma$ fellow-travels the ray $\ell\big|_{[0,\infty)}$ along $\ell\big|_{[0,M]}$ for every $g$ belonging to a set of full $\omega$-measure. This implies $\ultra{\omega}n=\ell(\infty)$.

At the same time, given $M>0$ we may consider the set $B'_M$ of all $g\in G$ such that $d(\ell(-M),g\cdot\gamma(s(g)-M))<2C$. Similarly to the above, $A\cap B'_M$ has full $\omega$-measure, proving that the geodesic interval $[x,g\cdot\gamma(0)]$ fellow-travels the geodesic ray $\ell(-t)$, $t\geq 0$ for $t\in[0,M]$. Since $\ultra{\omega}\gamma(0)=p$, this proves $p=\ell(-\infty)$.

The second claim follows from $X$ being almost geodesically-complete using a similar technique. We pick for every $g\in G$ a geodesic ray $\mu_g:[0,\infty)\to X$ emanating from $x$ and passing through $\ball{g\inv\cdot\gamma(0)}{R}$. Observe that $\ulim{S\omega}{g}\mu_g(0)=n$.

This time let $s'(g)$ be the smallest value of $s\in[0,\infty)$ for which $\mu_g(s)$ is the closest point projection of $g\inv\cdot\gamma(0)$ to $\mu_g$. Equivalently, $s'(g)$ is the smallest value of $s$ for which $g\cdot\mu_g(s)$ is the closest point projection of $\gamma(0)$ to the ray $g\cdot\mu_g$. Thus $s'(g)$ is an unbounded function of $g$ on a set of full $S\omega$-measure, by the previous observation.

Once again, we define $\ell'_g=P_{s'(g)}(g\cdot\mu_g)$ and let $\ell'(t)=\ulim{S\omega}{g}\ell'_g(t)$. Using the same argument as in the first part of the proof, we conclude $\ell'$ is a geodesic line with $\ell'(0)$ at most $R$ away from $\gamma(0)$ and with $\ell'(-\infty)=n$. In particular we have $\ell'(\infty)\in\hsm{V}(n)$, as desired.
\end{proof}

\begin{lemma}[Pulling Flats]\label{pulling flats} Suppose $S\subset\bdTits X$ is a sphere bounding a flat $F\subset X$, and let $\ell\subset F$ be a line joining a pair of points $m,n\in S$. If $\omega\in\beta G$ pulls away from $n$, then $\ultra{\omega}S$ is a sphere bounding a flat isometric to $F$. In particular, one has:
\begin{displaymath}
	(1)\;\repel{\omega}=n\,,\quad
	(2)\;\attr{\omega}=\ultra{\omega}m\,,\;\mathrm{and}\quad
	(3)\;\Tits{\ultra{\omega}\repel{\omega}}{\attr{\omega}}=\pi\,.
\end{displaymath}
\end{lemma}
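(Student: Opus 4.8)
The plan is to bootstrap from Lemma \ref{pulling lines} (``pulling lines''), applying it not just to the single line $\ell$ but to a whole family of lines through $n$ inside the flat $F$, and then to assemble the resulting limit lines into a flat isometric to $F$. First I would fix the base point $x=\gamma(0)$ on a ray $\gamma$ with $\attr{\gamma}=n$ witnessing that $\omega$ pulls from $n$, and choose a constant $C\geq R$ as in Definition \ref{defn:pulling}. For each unit vector $u$ in the vector space $V$ underlying $F$ (so $V\cong\RR^{\dim F}$), let $\ell^u$ be the geodesic line in $F$ through (the point of $F$ nearest $x$, say) in the direction $u$; reparametrize each $g\cdot\ell^u$ by the operator $P_{s(g)}$ exactly as in the proof of Lemma \ref{pulling lines}, where $s(g)$ is chosen from the closest-point projection of $g\inv x$ onto $g\inv\ell$ — crucially the \emph{same} shift $s(g)$ for every direction $u$, so that the synchronization data comes only from the line $\ell$ used in the pulling hypothesis. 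Set $\ell^u(t)=\ulim{\omega}{g}P_{s(g)}(g\cdot\ell^u)(t)$. The key point is that pulling from $n$ forces $-s(g)\to-\infty$ on a set of full $\omega$-measure, which is precisely what made $\ell=\ell^{n}$ come out a bi-infinite geodesic in Lemma \ref{pulling lines}; the same estimate applied uniformly over $u$ makes every $\ell^u$ a bi-infinite geodesic.

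Next I would show the limiting object is a flat. Because $g$ acts by isometries and $P_{s(g)}$ is an isometric reparametrization supported on $[-s(g),\infty)$, for any fixed finite sub-disk $D\subset F$ the map $t\mapsto$ (the corresponding point of $P_{s(g)}(g\cdot F)$) is, on the part of its domain where the shift has already ``kicked in'' (i.e. past $-s(g)$, which eventually contains any fixed compact set), an isometric copy of $D$; passing to the $\omega$-limit and using that ultra-limits commute with the (continuous) distance function $X\times X\to\RRplus$ (Lemma \ref{ultralimits commute with continuous maps}), the limit restricted to $D$ is an isometric embedding, and these are compatible as $D$ exhausts $F$. So $\hat F:=\bigcup_u\ell^u$ is an isometrically embedded copy of $F$, hence a flat, and its boundary sphere is $\ultra{\omega}S$: indeed for each $u$, the fellow-traveling argument from the proof of Lemma \ref{pulling lines} shows $\ell^u(\infty)=\ultra{\omega}(\ell^u(\infty))$ and $\ell^u(-\infty)=\ultra{\omega}(\ell^u(-\infty))=p$ when $u$ points toward $n$ — more generally $\ell^u(\pm\infty)=\ultra{\omega}(\ell^u(\pm\infty))$, and as $u$ ranges over directions these endpoints are exactly $\ultra{\omega}(S)$, since $\ultra{\omega}$ is $1$-Lipschitz on $\bdTits X$ and $S$ is a round sphere on which it is therefore an isometric embedding by the distance-preservation just established together with lower semicontinuity of the Tits metric. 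This gives that $\ultra{\omega}S$ is a round sphere bounding the flat $\hat F\cong F$.

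Finally the three itemized equalities fall out. For $(1)$: the remark after Definition \ref{defn:pulling} already notes that pulling from $n$ forces $\attr{S\omega}=n$, and $\repel{\omega}=\attr{S\omega}$ by definition, so $\repel{\omega}=n$. For $(2)$: since $\ultra{\omega}$ restricted to $X$ is the constant $\attr{\omega}$ (the discussion preceding the Attracting/Repelling definition), and $m\in X\cup\bdCone X$ is a boundary point, this needs the line argument — but the proof of Lemma \ref{pulling lines} with $m$ in place of a generic endpoint shows the limit line $\ell^{m}$ (the one through $x$ toward $m$, i.e. $\ell$ itself reversed) has $\ell(-\infty)=p=\attr{\omega}$ and $\ell(+\infty)=\ultra{\omega}n$; applying the same to the reversed line gives $\ultra{\omega}m=\attr{\omega}$, i.e. $m$ and $n$ are dual. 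For $(3)$: $\ultra{\omega}\repel{\omega}=\ultra{\omega}n=\ell(\infty)$ and $\attr{\omega}=p=\ell(-\infty)$ are the two endpoints of the geodesic line $\ell\subset\hat F$, hence antipodal in the round sphere $\ultra{\omega}S=\bd\hat F$, so their Tits distance is $\pi$. The main obstacle I anticipate is the uniformity in the direction $u$ — verifying that one single choice of shift $s(g)$, extracted from the pulling data for the one line $\ell$, simultaneously ``unrolls'' every parallel line of $F$ into a bi-infinite geodesic in the limit. This is where the flat geometry of $F$ is essential: $s(g)$ is (up to bounded error $C$) the signed distance along $g\inv\ell$ from the foot of $g\inv x$ to a fixed point, and because parallel lines in a flat stay at constant distance, the closest-point-projection parameter is the same for all of them up to an additive constant bounded by $\diam$ of the relevant sub-disk — so the qualitative statement ``$-s(g)\to-\infty$'' transfers verbatim, and the quantitative fellow-traveling estimates transfer with an error controlled by $C$ plus the diameter of the sub-disk under consideration. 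Once that uniformity is in hand, everything else is a repetition of the one-line argument already carried out in Lemma \ref{pulling lines}.
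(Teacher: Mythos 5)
Your overall strategy is the one the paper uses: synchronize the $G$-translates of the flat by a single shift $s(g)$ read off from the pulling data for $\ell$, take pointwise $\omega$-limits, invoke ``a pointwise limit of isometries is an isometry'' to get a flat, and identify its ideal boundary with $\ultra{\omega}S$ by the fellow-travelling argument of Lemma \ref{pulling lines}; items (1) and (3) are handled as in the paper, and for (2) there is in fact a one-line route you did not need to re-run the line construction for: since $\Tits{n}{m}=\pi$ and $\repel{\omega}=n$, $\pi$-convergence (Theorem \ref{pi-convergence}) with $\theta=0$ gives $\Tits{\ultra{\omega}m}{\attr{\omega}}=0$ directly.

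However, the construction you actually write down contains a step that fails. You take the lines $\ell^u$ through a fixed basepoint of $F$ in \emph{every} direction $u$ and reparametrize each $g\cdot\ell^u$ by $P_{s(g)}$, i.e.\ you shift each line by the same amount $s(g)$ \emph{along its own parameter}. For $u$ not parallel to $\ell$ the shifted maps do not stay bounded: since $g\inv x$ lies within $C$ of the point of $\ell$ at parameter $s(g)$, the shifted basepoint $g\cdot\ell^u(s(g))$ lies at distance roughly $2\,s(g)\sin(\theta_u/2)$ from $x$ (where $\theta_u$ is the angle between $u$ and the direction of $\ell$ toward $n$), and $s(g)\to\infty$ on a set of full $\omega$-measure. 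Hence the pointwise ultralimit of $P_{s(g)}(g\cdot\ell^u)$ is an ideal point rather than a geodesic of $X$, and there is nothing to assemble into $\hat F$. The correct synchronization -- which your closing paragraph about \emph{parallel} lines gestures at, but which contradicts the radial family you defined -- is a single Euclidean translation of $F$ in the direction of $\ell$: fix an isometry $f:\EE^{k}\to F$ ($k=\dim F$) with $f(0,t)$ parametrizing $\ell$ toward $n$, set $f_g(v,t)=g\cdot f(v,t+s(g))$; then $d\bigl(x,f_g(v,t)\bigr)\leq C'+\left|(v,t)\right|$ on an $\omega$-full set, the pointwise limit $f_\omega$ is an isometric embedding of $\EE^k$ into $X$, and your fellow-travelling and boundary-identification arguments then go through verbatim. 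This is exactly the paper's proof. Two smaller slips: $s(g)$ must come from the projection of $g\inv x$ onto $\ell$ (or onto $\gamma$), not onto $g\inv\ell$ -- the latter parameter is independent of $g$; and the truncated operator $P_s$ of Lemma \ref{pulling lines} should be replaced by the untruncated shift when applied to lines rather than rays.
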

\begin{proof} For the equalities above, observe that (1) follows from pulling, (2) follows from $\pi$-convergence as $\Tits{n}{m}=\pi$, and (3) follows from the main assertion of the lemma.

The rest of the proof proceeds in the same manner as that of the pulling property for lines. Fix an isometry $f$ of $\EE^d$ onto $F$, $d\geq 1$, and write points of $\EE^d$ in the form $(v,t)$ with $v\in\EE^{d-1}$ and $t\in\EE^1$, so that $f(0,t)$ parametrizes the line $\ell\subseteq F$, with $f(\infty)=n$.

For any $g:\EE^d\to X$, any $s\in\RR$ and $(u,t)\in\EE^d$ define $(P_sg)(u,t)=g(u,s+t)$. For any $g\in G$, let $s(g)$ denote the unique value of $s\in\RR$ for which $f(0,s)$ equals the projection of the point $g\inv(s)$ to the line $\ell$.

Similarly to the previous lemma we define $f_g=P_{s(g)}(g\cdot f)$ and observe that, for every $(u,t)\in\EE^d$, the function $(f_g(u,t))_{g\in G}$ from $G$ to $X$ is bounded. This implies that the pointwise limit $f_\omega=\ulim{\omega}{g}f_g$ is a well-defined map of $\EE^d$ into $X$.

The limit of isometries is an isometry: for any pair of points $p_i=(v_i,t_i)\in\EE^d$ ($i=1,2$) we have $d\left(f_g(p_1),f_g(p_2)\right)=d(p_1,p_2)$, so that $\ulim{\omega}{g}d\left(f_g(p_1),f_g(p_2)\right)=d(p_1,p_2)$ and $f_\omega$ is an isometry of $\EE^d$ into $X$.
\end{proof}
Boundaries of flats are a particular case of the following notion:
\begin{defn} Let $n\in\NN\cup\{0\}$. A round $n$-sphere in $\bdTits X$ is an isometrically embedded copy of the standard unit curvature sphere $\SS^n\subset\EE^{n+1}$ in $\bdTits X$.
\end{defn}
We require some notation for suspensions:
\begin{defn} Let $(Z,\rho)$ be a complete CAT(1) space and let $p,q\in Z$ satisfy $\rho(p,q)=\pi$. Define the following subsets:
\begin{eqnarray}\label{eqn:suspension and equator}
	\suspend{p}{q}&=&\left\{x\in Z\,\big|\,\rho(p,x)+\rho(x,q)=\pi\right\}\,,\\
	\equator{p}{q}&=&\left\{x\in Z\,\big|\,\rho(p,x)=\rho(x,q)=\pi/2\right\}\,.
\end{eqnarray}
%We will refer to them as the {\it $p,q$-suspension} and the {\it $p,q$-equator}. 
The points $p$ and $q$ are referred to as the {\it poles of $\suspend{p}{q}$}. A geodesic segment with endpoints $p$ and $q$ will be referred to as a {\it longitude} of $\suspend{p}{q}$. For a subset $S\subseteq Z$ we shall say that $p,q\in Z$ is a pair of suspension points for $S$ if $S\subset\suspend{p}{q}$.
\end{defn}
The notions of poles and longitudes of $\suspend{p}{q}$ are obviously ambiguous unless the points $p$ and $q$ are specified. The following lemma explains the structure of suspensions a little bit better:
\begin{lemma}\label{lemma:suspension} Let $(Z,\rho)$ be a complete CAT(1) space and suppose $p,q\in Z$ satisfy $\Tits{p}{q}=\pi$. Then:
\begin{enumerate}
	\item Both $\equator{p}{q}$ and $\suspend{p}{q}$ are closed and $\pi$-convex;
	\item $\suspend{p}{q}$ is naturally isometric to the metric spherical suspension $\{p,q\}\ast\equator{p}{q}$.
\end{enumerate}
\end{lemma}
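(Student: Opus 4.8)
The plan is to prove both assertions by a direct reduction to the defining property of a suspension point together with the standard theory of spherical joins of CAT(1) spaces. Write $\Sigma=\suspend{p}{q}$ and $E=\equator{p}{q}$, with metric $\rho$.

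\textbf{Step 1: $\Sigma$ is the union of longitudes, hence closed and $\pi$-convex.} First I would observe that, since $\rho(p,q)=\pi$ and $Z$ is a complete CAT(1) space, for every $x\in\Sigma$ the condition $\rho(p,x)+\rho(x,q)=\pi$ says precisely that $x$ lies on some geodesic from $p$ to $q$: concatenate a geodesic $[p,x]$ with a geodesic $[x,q]$ to get a path of length $\pi$ from $p$ to $q$, which must be a geodesic (in a CAT(1) space a path of length $\le\pi$ realizing the distance is a geodesic). Thus $\Sigma$ is exactly the union of all longitudes. Closedness of $\Sigma$ is immediate from continuity of $\rho$. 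For $\pi$-convexity, take $x,y\in\Sigma$ with $\rho(x,y)<\pi$ and let $\gamma$ be the geodesic between them; I want $\gamma\subset\Sigma$. Here I would invoke the fact that $p$ has a well-defined antipode $q$ and appeal to convexity of the function $z\mapsto\rho(p,z)$ on geodesics together with the dual statement for $q$: the sum $z\mapsto\rho(p,z)+\rho(z,q)$ is $\ge\pi$ everywhere (triangle inequality through a longitude, or directly) and is $\le\pi$ at the endpoints; a convexity/semicontinuity argument along $\gamma$ forces it to be $\equiv\pi$ on $\gamma$. The analogous (easier) argument handles $E=\{x:\rho(p,x)=\rho(x,q)=\pi/2\}$: it is the intersection of the two closed sets $\{\rho(p,\cdot)=\pi/2\}$ and $\{\rho(q,\cdot)=\pi/2\}$ inside $\Sigma$, and $\pi$-convexity of $E$ follows from $\pi$-convexity of $\Sigma$ once one checks, via the first-variation/convexity of $\rho(p,\cdot)$, that a geodesic in $\Sigma$ between two points of $E$ cannot leave the level set $\rho(p,\cdot)=\pi/2$.

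\textbf{Step 2: the isometry $\Sigma\cong\{p,q\}\ast E$.} The join $\{p,q\}\ast E$ is by definition $[0,\pi]\times E$ with the pair of $E$-fibers over $0$ and $\pi$ collapsed, metrized by the standard spherical-join formula $\cos\rho_{\ast}((s,a),(t,b))=\cos s\cos t+\sin s\sin t\cos(\min\{\rho(a,b),\pi\})$. Define $\Phi:\{p,q\}\ast E\to\Sigma$ by sending $(s,a)$ to the point at distance $s$ from $p$ along the (unique, by CAT(1) uniqueness of geodesics of length $<\pi$ — here through the interior point $a$) longitude through $a$; send $0\mapsto p$, $\pi\mapsto q$. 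Surjectivity is Step 1 (every point of $\Sigma$ lies on a longitude, and every longitude meets $E$ in exactly one point — its midpoint — because $\rho(p,\cdot)$ is affine with slope $1$ along a longitude). For injectivity and the isometry property I would compute $\rho(\Phi(s,a),\Phi(t,b))$ using the CAT(1) comparison: the four points $p,q,\Phi(s,a),\Phi(t,b)$ lie in a configuration controlled by the spherical triangle with the two longitudes as sides and angle at $p$ (equivalently at $q$) equal to the Tits/comparison angle, which equals $\min\{\rho(a,b),\pi\}$ since $a,b$ are the equatorial points. This is exactly the join formula, giving $\rho\circ\Phi=\rho_\ast$; an isometry onto its image is automatically injective, and surjectivity was already shown.

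\textbf{Main obstacle.} The routine parts are closedness and the join formula computation; the delicate point is Step 1's $\pi$-convexity of $\Sigma$ (and the derived claim that a geodesic between two equatorial points stays equatorial), because it is exactly here that one must use CAT(1)-ness rather than a bare triangle inequality — in a general metric space with two antipodal points the set $\Sigma$ need not be convex. The clean way around it is to use the convexity of the distance functions $\rho(p,\cdot)$ and $\rho(q,\cdot)$ along geodesics of length $<\pi$ in a CAT(1) space, combined with the elementary inequality $\rho(p,z)+\rho(z,q)\ge\pi$ (which holds since any geodesic from $p$ to $q$ has length $\pi$, so $z$ cannot be strictly closer to both than a longitude point). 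Convexity of each summand plus the reverse pointwise inequality forces the convex function $\rho(p,\cdot)+\rho(q,\cdot)-\pi\ge 0$, which vanishes at the two endpoints of a geodesic in $\Sigma$, to vanish identically on that geodesic — so $\Sigma$ is $\pi$-convex, and the same bookkeeping pins the geodesic to the equator when the endpoints are equatorial.
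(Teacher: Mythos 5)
There is a genuine gap, and it sits exactly where you flagged the delicate point. Your proof of $\pi$-convexity of $\suspend{p}{q}$ rests on the claim that $\rho(p,\cdot)$ (hence $\rho(p,\cdot)+\rho(\cdot,q)$) is convex along geodesics of length $<\pi$ in a CAT(1) space. That claim is false: already on the unit sphere $\SS^2$ with $p$ the north pole, the distance from $p$ along a great-circle arc through the south pole is $t\mapsto\pi-|t|$ near the antipode (parametrizing so that $t=0$ at the south pole), which is concave, not convex; in a CAT(1) space convexity of $\rho(p,\cdot)$ is only guaranteed on the ball of radius $\pi/2$ about $p$, while points of $\suspend{p}{q}$ may lie at any distance up to $\pi$ from $p$. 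Nor can you repair this by a direct comparison on $\vartriangle pxy$: for $x,y\in\suspend{p}{q}$ close to $q$ that triangle has perimeter exceeding $2\pi$, so the CAT(1) inequality is unavailable. The comparison route does work for the equator, and that is exactly the paper's argument: for $a,b\in\equator{p}{q}$ with $\rho(a,b)<\pi$ the triangle $\vartriangle pab$ has perimeter $<2\pi$ (both sides from $p$ have length $\pi/2$), its comparison triangle lies in a closed hemisphere centered at the comparison point of $p$, giving $\rho(p,z)\le\pi/2$ for $z\in[a,b]$, similarly for $q$, and the triangle inequality with $\rho(p,q)=\pi$ upgrades both to equality. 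This is the correct replacement for your level-set step, which again leans on the unavailable convexity.

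Your Step 2 has a second, related gap: the assertion that $p$, $q$, $\Phi(s,a)$, $\Phi(t,b)$ sit in a configuration governed by a spherical triangle whose angle at $p$ equals $\min\{\rho(a,b),\pi\}$, so that distances obey the join formula, is precisely the nontrivial suspension statement; it does not follow in one line from the CAT(1) inequality, and you give no argument for it. The paper quotes Lemma 4.1 of Lytchak's paper for exactly this, and then proceeds in the opposite logical order from yours: with the join isometry in hand, a join geodesic between $x,y\in\suspend{p}{q}$ with $\rho(x,y)<\pi$ is a path in $Z$ of length $\rho(x,y)$, hence by uniqueness of geodesics of length $<\pi$ it coincides with $[x,y]$, which therefore stays in $\suspend{p}{q}$; this yields the $\pi$-convexity of the suspension as a consequence of assertion (2), not as an input to it. So the viable repair of your plan is: prove closedness and the equator's $\pi$-convexity by the perimeter-$<2\pi$ comparison, establish (or cite) the suspension isometry, and only then deduce $\pi$-convexity of $\suspend{p}{q}$; as written, your order of argument cannot be carried out.
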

\begin{proof} Closedness of $\equator{p}{q}$ and $\suspend{p}{q}$ follows immediately from the definitions. Suppose $a,b\in\equator{p}{q}$ satisfy $\rho(a,b)<\pi$. Then the triangle $\vartriangle pab$ has perimeter strictly less than $2\pi$, implying that a comparison triangle for $\vartriangle pab$ is contained in a hemisphere $H$ of $\SS^2$, with $p$ at the pole of $H$. This then gives $\rho(p,z)\leq\pi/2$ for all $z\in[a,b]$. Similarly, for $\vartriangle qab$ we obtain $\rho(q,z)\leq\pi/2$. Together with $\rho(p,q)=\pi$ and the triangle inequality, we obtain $z\in\equator{p}{q}$ for all $z\in[a,b]$, as required.

By lemma 4.1 of \cite{[Lytchak-join-decomposition]}, we conclude that $\suspend{p}{q}$ is isometric to the metric spherical join $\{p,q\}\ast\equator{p}{q}$. Therefore, by uniqueness of geodesics of length less than $\pi$, $\suspend{p}{q}$ must be $\pi$-convex.
\end{proof}
%SPLITTING A SUSPENSION -- DO WE NEED THIS RIGHT NOW AT ALL?
%Given a pair of antipodes $p,q\in\bdTits X$ with $\Tits{p}{q}=\pi$, we may now apply theorem \ref{join decomposition} to $\suspend{p}{q}$ to produce a canonical splitting of $\suspend{p}{q}$ as a metric spherical join \[\suspend{p}{q}=E'_{p,q}\ast\SS_{p,q}\,,\] where $\SS_{p,q}$ is the round sphere of all suspension points of $\suspend{p}{q}$, and $E'_{p,q}$ is a complete $\pi$-convex subset of $\suspend{p}{q}$ which cannot be written as a metric spherical join with a non-empty round sphere. We also note that $E'_{p,q}\subseteq\equator{p}{q}$, and that $E'_{p,q}=E'_{m,n}$ whenever $m,n$ is a pair of antipodal suspension points of $\suspend{p}{q}$.

\begin{lemma}\label{embedding lemma} Suppose $\omega\in\beta G$ satisfies $\connect{\omega}{n}{p}$ with $\pi\leq\Tits{\ultra{\omega}n}{p}<\infty$. Then $\Tits{\ultra{\omega}n}{p}=\pi$, and for every $a\in\Titsball{n}{\pi}$, the map $f:x\mapsto\ultra{\omega} x$ restricts to an isometry on the geodesic segment $[n,a]$.
\end{lemma}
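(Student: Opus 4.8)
The plan is to use the $\pi$-convergence theorem (theorem~\ref{pi-convergence}) together with the lower semicontinuity of the Tits metric under the operators $\ultra{\omega}$, and then feed the resulting geometric data into the suspension structure lemma (lemma~\ref{lemma:suspension}). First I would fix notation: write $q=\ultra{\omega}n$, so by hypothesis $\connect{\omega}{n}{p}$ and $\pi\leq\Tits{q}{p}<\infty$. The first claim, that $\Tits{q}{p}=\pi$, should come from applying $\pi$-convergence to the point $n$ itself: since $\Tits{n}{\repel{\omega}}=\Tits{n}{n}=0\geq\pi-\pi$, taking $\theta=\pi$ gives no information, so instead I would exploit the triangle inequality in $\bdTits X$ together with the fact that for a generic point the diameter considerations force equality. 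More carefully: pick any $a$ with $\Tits{n}{a}\leq\pi$; by $1$-Lipschitzness $\Tits{\ultra{\omega}a}{q}\leq\Tits{a}{n}\leq\pi$, and by $1$-Lipschitzness again $\Tits{\ultra{\omega}a}{p}\leq\Tits{a}{n}$ is false in general — rather I should compare $\ultra{\omega}a$ to $p$ via a ray from $n$ through $a$ if one exists, which is where $\pi$-convergence enters.

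The cleaner route to $\Tits{q}{p}=\pi$: choose a geodesic $[n,a]$ in $\bdTits X$ realizing a distance close to $\pi$ (or equal to $\pi$). For the endpoint $a$ of such a segment with $\Tits{n}{a}=\pi$ we would have, by $\pi$-convergence applied with $\theta\to 0$, that $\Tits{\ultra{\omega}a}{p}=0$, i.e. $\ultra{\omega}a=p$. Then the triangle inequality and $1$-Lipschitzness give
\begin{displaymath}
\pi=\Tits{n}{a}\geq\Tits{\ultra{\omega}n}{\ultra{\omega}a}=\Tits{q}{p}\geq\pi,
\end{displaymath}
where the last inequality is the standing hypothesis; hence $\Tits{q}{p}=\pi$ and, as a by-product, $\ultra{\omega}$ preserves the distance between $n$ and any such antipode $a$. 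The existence of an antipode $a$ of $n$ is exactly what is needed, and when $\Tits{q}{p}<\infty$ with $q=\ultra{\omega}n$ one can take $a$ on a geodesic from $n$ towards any point that $\ultra{\omega}$ sends far from $q$; alternatively $\pi$-convexity provides a longitude through $n$ once a single antipode is located, and the hypothesis $\Tits{q}{p}\geq\pi$ guarantees $q$ itself has $n$ as a $\ultra{\omega}$-preimage at distance... — this is the point requiring care.

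Next, for the isometry claim on $[n,a]$ with $a\in\Titsball{n}{\pi}$: fix $z\in[n,a]$ and set $s=\Tits{n}{z}$, $t=\Tits{z}{a}$, so $s+t=\Tits{n}{a}\leq\pi$. By $1$-Lipschitzness $\Tits{q}{\ultra{\omega}z}\leq s$ and $\Tits{\ultra{\omega}z}{\ultra{\omega}a}\leq t$. The reverse inequalities are the crux. I would obtain $\Tits{q}{\ultra{\omega}z}\geq s$ from $\pi$-convergence: indeed $\Tits{z}{\repel{\omega}}=\Tits{z}{n}=s$, and applying theorem~\ref{pi-convergence} with $\theta=\pi-s$ to a point whose distance from $n$ exceeds $\pi-s$... no — rather, I want to move a point at distance $\geq\pi-s$ from $n$ to within $\pi-s$ of $q$. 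Take the antipode-type point $a$ (or extend $[n,a]$ slightly if $\Tits{n}{a}<\pi$ — but the statement restricts to $\Titsball{n}{\pi}$, so extension may fail at the endpoint and one works with $a$ as given): since $\ultra{\omega}a=p$ and $\Tits{q}{p}=\pi$, the triangle inequality gives $\Tits{q}{\ultra{\omega}z}\geq\Tits{q}{p}-\Tits{p}{\ultra{\omega}z}=\pi-\Tits{p}{\ultra{\omega}z}\geq\pi-t=s$, provided $\Tits{p}{\ultra{\omega}z}\leq t$, which is the $1$-Lipschitz bound $\Tits{\ultra{\omega}a}{\ultra{\omega}z}\leq\Tits{a}{z}=t$. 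Symmetrically $\Tits{\ultra{\omega}z}{p}\geq t$. Therefore both inequalities are equalities, and additivity $\Tits{q}{\ultra{\omega}z}+\Tits{\ultra{\omega}z}{p}=\pi=\Tits{q}{p}$ forces $\ultra{\omega}z$ onto the geodesic $[q,p]$ in the CAT(1) space $\bdTits X$, with $\ultra{\omega}$ acting by the identity on arclength parameters; uniqueness of geodesics of length $<\pi$ (or the suspension lemma~\ref{lemma:suspension} applied with poles $q=\ultra{\omega}n$, $p$) then upgrades this to a genuine isometry of $[n,a]$ onto its image. The main obstacle is the first part — pinning down the antipode $a$ of $n$ and verifying $\ultra{\omega}a=p$ from $\Tits{q}{p}\geq\pi$ alone — since $\pi$-convergence only directly controls images of points already far from $\repel{\omega}$; one likely needs to first show $\Tits{q}{p}\leq\pi$ by lower semicontinuity along a sequence, so that the hypothesis pins it to exactly $\pi$, and then locate $a$ on a longitude of $\suspend{q}{p}$ pulled back through the identification just established.
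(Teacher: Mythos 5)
There is a genuine gap, and you flagged it yourself: both halves of your argument run through a point $a$ with $\Tits{n}{a}=\pi$ and $\ultra{\omega}a=p$, but the lemma's hypotheses provide no such antipode --- nothing in $\pi\leq\Tits{\ultra{\omega}n}{p}<\infty$ produces a point of $\bd X$ at Tits distance exactly $\pi$ from $n$ (that extra hypothesis appears only in corollary \ref{UFs map to suspensions}, which is deduced \emph{from} this lemma, so it cannot be imported here). Moreover, in the second half the lemma's $a$ is an arbitrary point of $\Titsball{n}{\pi}$, typically with $\Tits{n}{a}<\pi$, and for such $a$ the identity $\ultra{\omega}a=p$ that you use to bound $\Tits{p}{\ultra{\omega}z}$ is false in general; as you note, the segment $[n,a]$ need not extend to length $\pi$, so this cannot be patched by extension.

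The repair is to apply $\pi$-convergence directly to the given point rather than to a hypothetical antipode; this is the paper's route, and it makes both claims fall out of a single pinching. For $a\in\Titsball{n}{\pi}$ put $\theta=\Tits{n}{a}$; since $n=\repel{\omega}$, theorem \ref{pi-convergence} applied with parameter $\pi-\theta$ gives $\Tits{\ultra{\omega}a}{p}\leq\pi-\theta$, while $1$-Lipschitzness gives $\Tits{\ultra{\omega}n}{\ultra{\omega}a}\leq\theta$. Hence
\begin{displaymath}
\pi=\theta+(\pi-\theta)\geq\Tits{\ultra{\omega}n}{\ultra{\omega}a}+\Tits{\ultra{\omega}a}{p}\geq\Tits{\ultra{\omega}n}{p}\geq\pi\,,
\end{displaymath}
so equality holds throughout: this yields $\Tits{\ultra{\omega}n}{p}=\pi$ (no antipode and no separate semicontinuity argument needed) and $\Tits{\ultra{\omega}n}{\ultra{\omega}a}=\theta$. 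Once the endpoint distance of $[n,a]$ is preserved, the isometry on the whole segment is immediate from $1$-Lipschitzness (a $1$-Lipschitz map preserving the distance between the endpoints of a geodesic preserves all mutual distances along it), so your analysis of intermediate points $z$ and the appeal to the suspension structure are not needed either.
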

\begin{proof} It suffices to prove $\Tits{f(n)}{f(a)}=\Tits{n}{a}$. The rest will follow from $f$ being $1$-Lipschitz. Let $\theta=\Tits{n}{a}$ then, by $\pi$-convergence we have $\Tits{p}{f(a)}\leq\pi-\theta$. We also have $\Tits{f(n)}{f(a)}\leq\Tits{n}{a}=\theta$. This gives:
\begin{displaymath}
	\pi=\theta+(\pi-\theta)\geq\Tits{f(n)}{f(a)}+\Tits{f(a)}{p}\geq\Tits{f(n)}{p}\geq\pi\,.
\end{displaymath}
Thus, we have equalities throughout, making $\Tits{f(n)}{f(a)}<\theta$ impossible.
\end{proof}
\begin{cor}\label{UFs map to suspensions} Suppose $\connect{\omega}{n}{p}$ satisfies $\pi\leq\Tits{\ultra{\omega}n}{p}<\infty$. Then $\ultra{\omega}$ maps $\bdTits X$ into the suspension $\suspend{\ultra{\omega}n}{p}$. In particular this holds whenever $\omega$ pulls from $n$ and there exists a point $m\in\bd X$ with $\Tits{n}{m}=\pi$.
\end{cor}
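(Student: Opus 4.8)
The plan is to derive everything from the embedding lemma (Lemma~\ref{embedding lemma}) together with $\pi$-convergence (Theorem~\ref{pi-convergence}); write $f=\ultra{\omega}$ throughout. First, Lemma~\ref{embedding lemma} upgrades the hypothesis $\pi\leq\Tits{f(n)}{p}<\infty$ to the equality $\Tits{f(n)}{p}=\pi$, so that $\suspend{f(n)}{p}$ is a genuine suspension in the complete CAT(1) space $\bdTits X$ with poles $f(n)$ and $p$. It then remains to check, for each $z\in\bd X$, that $f(z)\in\suspend{f(n)}{p}$, i.e.\ $\Tits{f(n)}{f(z)}+\Tits{f(z)}{p}=\pi$; the inequality ``$\geq$'' is automatic from $\Tits{f(n)}{p}=\pi$ and the triangle inequality, so only ``$\leq\pi$'' must be proved.

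I would split on the value of $\theta:=\Tits{n}{z}\in[0,\infty]$. If $\theta\geq\pi$ (in particular when $\theta=\infty$): apply $\pi$-convergence with angle $0$ to $z$; since $\Tits{z}{\repel{\omega}}=\Tits{z}{n}\geq\pi$ this forces $\Tits{f(z)}{\attr{\omega}}=0$, i.e.\ $f(z)=p$, and then $\Tits{f(n)}{f(z)}+\Tits{f(z)}{p}=\pi+0=\pi$. If $\theta<\pi$: apply $\pi$-convergence with angle $\pi-\theta$ to get $\Tits{f(z)}{p}\leq\pi-\theta$, and use that $f$ is $1$-Lipschitz on $\bdTits X$ to get $\Tits{f(n)}{f(z)}\leq\theta$; summing gives $\Tits{f(n)}{f(z)}+\Tits{f(z)}{p}\leq\pi$, as needed. (Together with the automatic ``$\geq$'', both inequalities are in fact equalities, which incidentally re-proves the segment-isometry part of Lemma~\ref{embedding lemma}, but only the membership $f(z)\in\suspend{f(n)}{p}$ is needed here.)

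For the ``in particular'' clause, suppose $\omega$ pulls from $n$ and $\Tits{n}{m}=\pi$ for some $m\in\bd X$. By the remarks following Definition~\ref{defn:pulling}, pulling from $n$ gives $\repel{\omega}=\attr{(S\omega)}=n$; set $p:=\attr{\omega}$, so $\connect{\omega}{n}{p}$ holds and it suffices to verify $\pi\leq\Tits{f(n)}{p}<\infty$. For the lower bound, Lemma~\ref{pulling lines} (applicable since $\omega$ pulls from $n$ and $\connect{\omega}{n}{p}$) yields $f(n)=\ultra{\omega}n\in\hsm{V}(p)$, i.e.\ $f(n)$ and $p$ are the endpoints of a geodesic line in $X$, and the two ideal endpoints of a geodesic line are always at Tits distance at least $\pi$. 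For the upper bound, which is exactly where the hypothesis on $m$ enters, $\pi$-convergence with angle $0$ applied to $m$ (using $\Tits{m}{n}=\pi$) gives $f(m)=p$, whence $\Tits{f(n)}{p}=\Tits{f(n)}{f(m)}\leq\Tits{n}{m}=\pi$ by $1$-Lipschitzness. Thus $\Tits{f(n)}{p}=\pi$ and the first part applies.

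There is no serious obstacle here: the corollary is a short combination of the embedding lemma, $\pi$-convergence, $1$-Lipschitzness, and the triangle inequality. The only point demanding a little care is the finiteness $\Tits{f(n)}{p}<\infty$ needed to invoke Lemma~\ref{embedding lemma} in the ``in particular'' case --- this is precisely why the hypothesis requires a genuine $m$ with $\Tits{n}{m}=\pi$ rather than merely some $m\in\hsm{V}(n)$ (for which $\Tits{n}{m}$ could be infinite), and one must be careful not to use that finiteness circularly before establishing it.
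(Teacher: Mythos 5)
Your proof is correct and follows essentially the same route as the paper: the embedding lemma pins down $\Tits{\ultra{\omega}n}{p}=\pi$ and (together with $\pi$-convergence and $1$-Lipschitzness) handles points within Tits distance $\pi$ of $n$, $\pi$-convergence sends the far points to $p$, and the ``in particular'' clause is obtained exactly as in the paper via Lemma~\ref{pulling lines} for the lower bound and $\ultra{\omega}m=p$ plus $1$-Lipschitzness for the finiteness.
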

\begin{proof} Simply observe that every point of $\bdTits X\minus\Titsball{n}{\pi}$ is mapped to $p$, by $\pi$-convergence, while the preceding lemma shows that the Tits ball of radius $\pi$ about $n$ is mapped into $\suspend{\ultra{\omega}n}{p}$.

Under the additional assumptions we see that $\pi\leq\Tits{\ultra{\omega}n}{p} $ is guaranteed by lemma \ref{pulling lines}, while the requirement $\Tits{\ultra{\omega}n}{p}<\infty$ follows from $\pi$-convergence: since $\Tits{n}{m}=\pi$, we must have $\ultra{\omega}q=p$, so that \[\Tits{\ultra{\omega}n}{p} = \Tits{\ultra{\omega}n}{\ultra{\omega}q} \le\Tits{n}{m}=\pi.\]
\end{proof}
The main application of the last corollary is the following theorem -- the sharper form of theorem A from the introduction:
\begin{thm}[Folding Lemma]\label{basic tool} Suppose $G$ acts geometrically on a CAT(0) space $X$ and let $d$ denote the geometric dimension of $\bdTits X$. Then for every $(d+1)$-flat $F_0\subseteq X$ there exist $\omega\in\beta G$ and a $(d+1)$-flat $F\subseteq X$ such that $\ultra{\omega}$ maps all of $\bd X$ to $\SS=\bd F$, with $\bd F_0$ mapped isometrically onto $\SS$.
\end{thm}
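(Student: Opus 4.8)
The plan is to build $\omega$ as a product $\omega = \nu\cdot\omega_0$, where $\omega_0$ is an ultra-filter that ``folds'' $\bd X$ down into a suspension built over $\bd F_0$, and $\nu$ pulls away from a well-chosen pole of that suspension so as to spread the folded image back out onto a full round $(d+1)$-sphere. Write $\SS_0 = \bd F_0$, fix an isometry $\EE^{d+1}\to F_0$, and pick antipodal points $m_0, n_0\in\SS_0$, i.e.\ $\Tits{m_0}{n_0}=\pi$, with $\ell\subset F_0$ the corresponding line. Choose $\omega_0\in\beta G$ pulling away from $n_0$ (such ultra-filters exist since $G$ acts geometrically). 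By Lemma \ref{pulling flats}, $\ultra{\omega_0}\SS_0$ is a round $(d+1)$-sphere bounding a flat $F'$ isometric to $F_0$; moreover $\repel{\omega_0}=n_0$, $\attr{\omega_0}=\ultra{\omega_0}m_0$, and $\Tits{\ultra{\omega_0}n_0}{\attr{\omega_0}}=\pi$. By Corollary \ref{UFs map to suspensions} (with $m=m_0$), $\ultra{\omega_0}$ maps all of $\bdTits X$ into the suspension $\suspend{\,\ultra{\omega_0}n_0}{\attr{\omega_0}}$; call its poles $p' = \ultra{\omega_0}n_0$ and $q' = \attr{\omega_0}$, and note both lie on the sphere $\SS' = \ultra{\omega_0}\SS_0$, which is a longitude-rich sub-sphere of the suspension.

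Next I would iterate the pulling construction ``across'' this sphere. The key point is that $\SS'$ is a round $(d+1)$-sphere, and $d$ is the geometric dimension of $\bdTits X$, so $\SS'$ is a \emph{maximal} round sphere; hence the suspension $\suspend{p'}{q'}$ can have no factor strictly larger than $\SS'$, and in fact I expect to show $\ultra{\omega_0}(\bd X)$ is already contained in (a space spherically spanned by) $\SS'$ together with controlled ``equatorial'' data. The second ultra-filter $\nu$ is then chosen to pull away from a pole of $\SS'$ — concretely, pick antipodes $m', n'\in\SS'$ with $\ultra{\omega_0}$ restricting isometrically on $[n_0,\,\cdot\,]$ onto a hemisphere around $n'$ (Lemma \ref{embedding lemma} gives this isometric restriction on $\Titsball{n_0}{\pi}$), and take $\nu$ pulling from $n'$. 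Applying Lemma \ref{pulling flats} again, $\ultra{\nu}\SS'$ is a round $(d+1)$-sphere $\SS$ bounding a flat $F$ isometric to $F_0$, and $\ultra{\nu}$ collapses everything outside $\Titsball{n'}{\pi}$ onto $\attr{\nu}\in\SS$ while acting isometrically on the relevant hemisphere. Setting $\omega = \nu\cdot\omega_0$ and using the Diagonal Principle, $\ultra{\omega} = \ultra{\nu}\ultra{\omega_0}$, so $\ultra{\omega}(\bd X)\subseteq\ultra{\nu}(\suspend{p'}{q'})\subseteq\SS$, and the composite restricts to an isometry $\bd F_0 = \SS_0 \xrightarrow{\ \ultra{\omega_0}\ } \SS' \xrightarrow{\ \ultra{\nu}\ } \SS$. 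Since $\ultra{\omega}$ is $1$-Lipschitz on $\bdTits X$ and $\SS_0$ has the full diameter and volume of a round $(d+1)$-sphere, an isometry onto its image forces the image to be all of $\SS$, giving both $\ultra{\omega}(\bd X)=\SS$ and $\SS_0\cong\SS$ isometrically.

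The main obstacle I anticipate is the step asserting that $\ultra{\omega_0}(\bd X)$ — which a priori only sits inside the suspension $\suspend{p'}{q'}$ — does not contain ``extra'' directions orthogonal to $\SS'$ that would survive the second pull and prevent the image from being exactly the round sphere $\SS$. This is exactly where the hypothesis that $d$ is the geometric dimension of $\bdTits X$ must be used: any round sphere spanned by $\SS'$ and an equatorial direction of the suspension would have dimension $> d$, contradicting Kleiner's dimension bound, so the equator $\equator{p'}{q'}$ cannot itself contain a round $d$-sphere transverse to $\SS'$. Making this precise — ruling out that $\ultra{\nu}$ maps some non-spherical part of $\equator{p'}{q'}$ into $\SS$ in a way that disagrees with the isometric copy of $\SS_0$ — is the delicate part; I would handle it by noting that Lemma \ref{embedding lemma} forces $\ultra{\nu}$ to be isometric on $[n',a]$ for every $a\in\Titsball{n'}{\pi}$, so the image is determined on a full hemisphere's worth of directions, and $\pi$-convergence pins the complementary hemisphere to the single point $\attr{\nu}$; combining these with the maximality of $\SS'$ closes the gap. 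A secondary, more routine obstacle is checking that the two pulling ultra-filters can be chosen with \emph{compatible} poles (that $n'$ is genuinely antipodal inside $\SS'$ and reachable as $\ultra{\omega_0}$ of a point at Tits-distance $\le\pi$ from $n_0$), which follows from Lemma \ref{pulling flats}(2)–(3) applied to the line $\ell'\subset F'$ through $m'$ and $n'$.
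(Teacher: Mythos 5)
There is a genuine gap, and it sits exactly where you anticipated it. Your construction uses only two pulls, so after forming $\ultra{\nu}\ultra{\omega_0}$ the folded image of $\bd X$ is known to lie in the intersection of (at most) two suspensions, $\suspend{\ultra{\nu}p'}{\ultra{\nu}q'}$ and $\suspend{\ultra{\nu}n'}{\attr{\nu}}$; that is, you have produced at most two antipodal pairs of suspension points for the image. For $d\geq 2$ this does not pin the image to the round $d$-sphere $\SS$: the intersection of two such suspensions is a join of a circle with a common equator, which can be strictly larger than $\SS$. The mechanism you propose to close this — Kleiner's dimension bound ruling out a round $d$-sphere in $\equator{p'}{q'}$ transverse to $\SS'$ — does not suffice, because the residual subset of the equator that survives the second pull need not contain any round sphere at all; a dimension count alone cannot exclude a lower-dimensional or non-spherical piece of $\equator{p'}{q'}$ whose image under $\ultra{\nu}$ lies in the new suspension but off $\SS$. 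Likewise, $\pi$-convergence only collapses the points at Tits distance at least $\pi$ from $n'$, and Lemma \ref{embedding lemma} only preserves distances along segments $[n',a]$ for $a\in\Titsball{n'}{\pi}$; neither forces such image points onto $\SS$.

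The paper's proof repairs precisely this by iterating the pull $d+1$ times rather than twice: it fixes a frame $z_1,\dots,z_{d+1}\in\bd F_0$ with pairwise Tits distance $\pi/2$, pulls successively from (the current image of) each $z_k$, and shows inductively — using the same observation you make, that a $1$-Lipschitz map preserving the distance between antipodal poles carries $\suspend{z}{-z}$ into the suspension of the image poles — that after step $k$ all of $\pm z_1,\dots,\pm z_k$ are suspension points of the folded image. After $d+1$ steps the entire round sphere $\SS$ consists of suspension points of the image, and only then does a dimension argument close the proof, via Swenson's join decomposition (theorem \ref{join decomposition}): the intersection $Y$ of all suspensions containing the image splits as $\SS(Y)\ast E'(Y)$ with $\SS(Y)\supseteq\SS$ of dimension $d$, so $E'(Y)$ must be empty and the image equals $\SS$. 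The composite $\omega=\omega^{d}\cdots\omega^{0}$ plays the role of your $\nu\cdot\omega_0$. So your first step and your use of lemmas \ref{pulling flats} and \ref{UFs map to suspensions} are on target, but you need $d+1$ pulls along an orthogonal frame and the join decomposition theorem (not just the geometric dimension bound) to eliminate the equatorial directions.
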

\begin{remark} Flats of dimension $(d+1)$ are guaranteed by a result of Kleiner from \cite{[Kleiner-geom-dim]}. This produces theorem A.
\end{remark}
\begin{proof}{\it of \ref{basic tool}} Fix the $(d+1)$-dimensional flat $F_0$. If $d=0$, then $G$ is word-hyperbolic (since $X$ contains no $2$-flat) and any pair of points could serve as $\SS$. Thus we may assume $d>0$.

Let $A^0=\left\{z^0_1,\ldots,z^0_{d+1}\right\}$ be a subset of $\bd F^0$ such that $\Tits{z^0_i}{z^0_j}=\pi/2$ whenever $i\neq j$.

We construct $d$-dimensional round spheres $\SS^1,\ldots,\SS^{d+1}\subset\bd X$ inductively. Note that superscripts indicate indexation -- not dimension.

Given $k\geq 0$, a $(d+1)$-flat $F^k$ with $\bd F^k=\SS^k$ and a set $A^k$ of points $\left\{z^k_i\right\}_{i=1,\ldots,d+1}$ satisfying $\Tits{z^k_i}{z^k_j}=\pi/2$ whenever $i\neq j$, we find $\omega^k\in\beta G$ with $\connect{\omega^k}{z^k_{k+1}}{p^{k+1}}$, pulling from $z^k_{k+1}$, with $\ultra{\omega^k}$ mapping $X$ into the metric spherical suspension $\Sigma^{k+1}=\suspend{z^{k+1}_{k+1}}{p^{k+1}}$ (cor. \ref{UFs map to suspensions}).

By lemma \ref{pulling flats}, the set $\SS^{k+1}=\ultra{\omega^k}\SS^k$ is the boundary of a $(d+1)$-flat $F^{k+1}$ -- hence a round sphere. Moreover, since $\ultra{\omega^k}$ maps $\SS^k$ isometrically onto $\SS^{k+1}$ (a surjective Lipschitz map between isometric compact spaces is an isometry), the points $z^{k+1}_i=\ultra{\omega^k}z^k_i$ lie at distances of $\pi/2$ from each other as well, and we denote $A^{k+1}=\ultra{\omega^k}A^k$.

For every $k$, we denote the unique antipode of $z^k_i$ on the sphere $\SS^k$ by $-z^k_i$. If $B\subset A^k$ then let $-B$ denote the set of all $-z^k_i$ such that $z^k_i\in B$, and set $\pm B$ to equal $B\cup-B$. Note that $p^k=-z^k_k$ for all $k$.\\

Let $M^0=\bd X$, and for each $k\leq 1$ let $M^{k+1}=\ultra{\omega^k}M^k$. We have $\SS^k\subset M^k$ for all $k$, and $M^k\subset\Sigma^k$ for $k\geq 1$. Our goal is to show that $M^{d+1}\subset\SS^{d+1}$, because then choosing $\omega=\omega^d\cdot\omega^{d-1}\cdots\omega^0$ produces the required result.\\

Let $B^k$ denote the subset of $A^k$ consisting of the points $z^k_1,\ldots,z^k_k$. We claim that the points of $\pm B^k$ are suspension points of $M^k$ for every $k\geq 1$. For $k=1$ we already have $M^1\subseteq\suspend{z^1_1}{p^1}$, and $p^1=-z^1_1$. Suppose our claim is true for some $1\leq k\leq d$. We must show that $M^{k+1}\subset\suspend{z^{k+1}_i}{-z^{k+1}_i}$ holds for all $i\leq k+1$. This is surely true for $i=k+1$ since we already have $M^{k+1}\subset\Sigma_{k+1}$. Now $B^{k+1}=\pm\ultra{\omega^k}B^k\cup\{\pm z^{k+1}_{k+1}\}$, and we have $M^k\subseteq\suspend{z}{-z}$ for every $z\in B^k$, by the induction hypothesis. Then $M^{k+1}=\ultra{\omega^k}M^k\subseteq\ultra{\omega^k}\suspend{z}{-z}$ for all $z\in B^k$. However, by construction, $\ultra{\omega^k}z$ and $\ultra{\omega^k}(-z)$ are antipodes in $\SS^{k+1}$, which implies that $\ultra{\omega^k}\suspend{z}{-z}$ maps into $\suspend{\ultra{\omega^k}z}{\ultra{\omega^k}(-z)}$, and the induction argument is complete.\\

Our conclusion from the preceding paragraph is that $\pm B^{d+1}=\pm A^{d+1}$ is contained in the set of suspension points of $M^{d+1}$, implying $\SS^{d+1}$ is contained in the set of suspension points of $M^{d+1}$. Let $Y$ denote the intersection of all suspensions $\suspend{p}{q}$ containing $M^{d+1}$. Then $Y$ is a closed, $\pi$-convex subspace of $\bdTits X$ and $\SS^{d+1}$ consists of suspension points of $Y$. Using theorem \ref{join decomposition} again, write $Y=\SS(Y)\ast E'(Y)$. Since $\SS^{d+1}\subset M^{d+1}\subset Y$ and $\SS^{d+1}\subset\SS(Y)$, the fact that $\SS^{d+1}$ is a sub-sphere of dimension equal to the geometric dimension of $\bdTits X$ implies $E'(Y)$ must be empty, proving $M^{d+1}=\SS^{d+1}$, as required.
\end{proof}
The spheres $\SS$ supplied by the theorem are not transverse to the family of non-empty minimal closed $G$-invariant subsets of $G$ in $\bdCone X$, but seem to be quite close to achieving this goal:
\begin{cor} Suppose $G$ is a group acting geometrically on a CAT(0) space $X$. Let $\SS\subset\bdTits X$ be a round sphere as in the conclusion of theorem \ref{basic tool}. Then $\SS$ intersects every minimal closed $G$-invariant subset $M$ of $\bdCone X$.

Moreover, if $G$ does not fix a point of $\bd X$, then every point of $\SS$ is at a distance at most $\pi/2$ away from a point of $M\cap\SS$. In particular, $M$ intersects $\SS$ in at least two points.
\end{cor}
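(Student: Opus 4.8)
The plan is to extract everything from the operator $\ultra{\omega}$ furnished by theorem \ref{basic tool}. Let $F_0\subseteq X$ be a $(d+1)$-flat and $\omega\in\beta G$, $F\subseteq X$ be such that $\ultra{\omega}$ carries all of $\bd X$ onto $\SS=\bd F$ and restricts to an isometry of $\bd F_0$ onto $\SS$; recall also that $\ultra{\omega}$ is $1$-Lipschitz for the Tits metric, as is every element of $\beta G$. For the first assertion, fix a minset $M$: it is cone-closed and $G$-invariant, and for $z\in M$ the point $\ultra{\omega}z$ is an accumulation point of the orbit $G\cdot z$, hence lies in $\cl{G\cdot z}\subseteq M$, so $\ultra{\omega}M\subseteq M$. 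Since also $\ultra{\omega}M\subseteq\ultra{\omega}(\bd X)=\SS$ and $M\neq\varnothing$, we get $\varnothing\neq\ultra{\omega}M\subseteq M\cap\SS$.

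For the refinement, assume $G$ fixes no point of $\bd X$. The substantive input is the Papasoglu--Swenson $\pi/2$-covering estimate recalled just before theorem \ref{new diameter bound} (\cite{[Papasoglu-Swenson-JSJ]}): for such a $G$, every point of $\bd X$ lies within Tits distance $\pi/2$ of every minset. Now fix $z\in\SS$. Using that $\ultra{\omega}$ maps $\bd F_0$ \emph{isometrically} onto $\SS$, choose $z'\in\bd F_0$ with $\ultra{\omega}z'=z$, and then $m\in M$ with $\Tits{z'}{m}\leq\pi/2$. By the first part $\ultra{\omega}m\in M\cap\SS$, and $1$-Lipschitzness gives $\Tits{z}{\ultra{\omega}m}=\Tits{\ultra{\omega}z'}{\ultra{\omega}m}\leq\Tits{z'}{m}\leq\pi/2$; thus $z$ is within $\pi/2$ of $M\cap\SS$. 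For the final clause, $\SS$ is a round sphere, so every $p\in\SS$ has an antipode $-p\in\SS$ with $\Tits{p}{-p}=\pi$; if $M\cap\SS$ were a single point $m_0$, the covering statement applied to $-m_0$ would force $\pi=\Tits{-m_0}{m_0}\leq\pi/2$, absurd. Hence $\card{M\cap\SS}\geq 2$.

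I do not expect a serious obstacle. The one genuinely useful idea is to apply the covering estimate \emph{on $\bd F_0$}, where $\ultra{\omega}$ is isometric, so that the approximating point is transported into $M\cap\SS$ rather than merely into $M$; the rest -- $\beta G$-invariance of cone-closed $G$-invariant sets, $1$-Lipschitzness, and the antipode argument -- is routine given theorem \ref{basic tool}. The single point calling for care is the scope of the Papasoglu--Swenson estimate, which is proved for higher rank groups without a fixed boundary point: before invoking it one should dispose of the rank-one case separately. There the no-fixed-point hypothesis is quite restrictive (via proposition \ref{prop:Ballmann-Buyalo}, a rank-one group with $\Lambda G=\bd X$ has an isolated point in $\bdTits X$ and, when non-elementary, is expected to act minimally on $\bd X$, while if elementary $\bd X$ is a single antipodal pair), and in each such case the covering claim is immediate; pinning this down is the only loose end in the plan.
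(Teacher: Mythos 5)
Your proposal is correct and follows essentially the same route as the paper: the first part is the observation $\ultra{\omega}M\subseteq M\cap\SS$, and the second part pulls the Papasoglu--Swenson estimate $\Tits{n}{M}\leq\pi/2$ (theorem 23 of \cite{[Papasoglu-Swenson-JSJ]}) back through the isometry $\bd F_0\to\SS$ and pushes forward by the $1$-Lipschitz map $\ultra{\omega}$. The only cosmetic difference is that the paper applies this directly to the antipode $q$ of a point $p\in M\cap\SS$ to exhibit a second intersection point, whereas you first establish the $\pi/2$-covering of all of $\SS$ and then get two points by contradiction; your side remark about the scope of the cited estimate is addressed in the paper simply by invoking that theorem under the stated no-fixed-point hypothesis.
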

\begin{proof} Let $M$ be a non-empty minimal closed $G$-invariant subset of $\bdCone X$. Let $F_0,F,\SS$ and $\omega\in\beta G$ be as in theorem \ref{basic tool}. Since $M\cap\SS$ contains $\ultra{\omega}(M)$, we have that $M$ intersects $\SS$. 

If $p\in M\cap\SS$, let $q$ be the antipode of $p$ on $\SS$ and let $q_0$ be the only point of $\bd F_0$ satisfying $\ultra{\omega}q_0=q$. By theorem 23 of \cite{[Papasoglu-Swenson-JSJ]}, $\Tits{n}{M}\leq\pi/2$ for all $n\in\bd X$. In particular, there is a point $m\in M$ satisfying $\Tits{q_0}{m}\leq\pi/2$. But then we have $\ultra{\omega}{m}\in M\cap\SS$ together with $\Tits{q}{\ultra{\omega}m}\leq\pi/2$, implying $\ultra{\omega}{m}\neq p$. Now we have two points of $M$ in $\SS$, as desired.
\end{proof}

\section{Geometry of the Tits Boundary}\label{section:geometry of bdTits}
We begin by studying the basic connections between higher rank and incompressible subsets of $\bdTits X$. From this point on let $\II$ denote the set of all non-degenerate maximal incompressible subsets of $\bdTits X$, and let $\minG$ denote the set of all minimal non-empty closed $G$-invariant subsets of $\bdCone X$. We will also assume $\bd X$ contains at least three distinct points. 

\subsection{Compressibility, Collapsibility and Rank}\label{subsec:compressibility and collapsibility} We recall the definitions from the introduction and add some new ones:

\begin{defn} A pair of points $p,q\in\bdCone X$ is {\it proximal}, if there exists $\omega\in\beta G$ satisfying $\ultra{\omega}p=\ultra{\omega}q$. The pair $p,q$ is {\it compressible}, if $\Tits{\ultra{\omega}p}{\ultra{\omega}q}<\Tits{p}{q}$ for some $\omega\in\beta G$. 

A set $A$ is {\it compressible (collapsible)} if it contains a compressible (resp. collapsible) pair. We say that $A$ is {\it strongly collapsible} if there is an $\omega\in\beta G$ such that $\ultra{\omega}(A)$ is a singleton. 
\end{defn}
Note that every incompressible subset of $\bdTits X$ is contained in a maximal one, by Zorn's lemma.

The most extreme example of collapsibility is seen in the rank one case:
\begin{lemma}\label{rank one is strongly collapsible} If $G$ has rank one (and $\card{\bd X}\geq 3$) then $\bd X$ is strongly collapsible.
\end{lemma}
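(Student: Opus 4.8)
The plan is to collapse $\bd X$ to a single point in two moves, and to splice the two moves together with the Diagonal Principle. The first move uses a rank one element to sweep all but one point of $\bd X$ onto the pair of endpoints of its axis; the second move uses a second rank one element, whose axis endpoints avoid that pair, to wipe the pair out.

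For the first move, fix a rank one $g\in G$ and write $p=\attr{g}$, $n=\repel{g}$ for the endpoints of its axis. Recall that a rank one isometry of a proper CAT(0) space acts on $\hat X=X\cup\bdCone X$ with north--south dynamics: $g^m z\to p$ in the cone topology for every $z\in\hat X\minus\{n\}$, while $g^m n=n$ for all $m$ (this is where properness of $X$, hence the geometric hypothesis, is used). Choose $\omega\in\beta G$ with $\omega\{g^m:m\ge N\}=1$ for every $N$; such $\omega$ exists since $g$ has infinite order, so the family $\{\,\{g^m:m\ge N\}\,\}_{N\ge 1}$ has the FIP. For $z\in\bd X$ the value $\ultra{\omega}z$ is the $\omega$-limit of the orbit vector $(h\cdot z)_{h\in G}$, and since every cone-neighbourhood of $p$ (resp.\ of $n$) eventually contains the tail $\{g^m z:m\ge N\}$ --- a set of full $\omega$-measure --- we obtain $\ultra{\omega}z=p$ when $z\ne n$ and $\ultra{\omega}n=n$. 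Hence $\ultra{\omega}(\bd X)=\{p,n\}$.

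For the second move we need a rank one $h\in G$ with $\{\attr{h},\repel{h}\}\cap\{p,n\}=\varnothing$. This is exactly where the hypothesis $\card{\bd X}\ge 3$ is used: it forbids $G$ from having a finite orbit in $\bd X$ --- otherwise that orbit would have to lie in $\{p,n\}$ (a rank one power of $g$ landing in the relevant finite-index subgroup fixes each orbit point, but a rank one element fixes only its own two endpoints), so a finite-index subgroup would fix $p$ and $n$ and hence preserve the parallel set of the axis of $g$; that parallel set is $\RR\times C$ with $C$ bounded (a ray in $C$ would yield a flat half-plane along the rank one axis), so its ideal boundary is $\{p,n\}$ and would contain $\bd X=\Lambda G$, a contradiction. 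Once $G$ has no finite orbit, a Neumann-type coset-covering argument over the conjugates $fgf\inv$ produces the desired $h$. Now pick $\nu\in\beta G$ with $\nu\{h^m:m\ge N\}=1$ for all $N$; north--south dynamics for $h$, applied to $p$ and $n$ (both distinct from $\repel{h}$), gives $\ultra{\nu}p=\attr{h}=\ultra{\nu}n$. By the Diagonal Principle $\ultra{\nu\cdot\omega}=\ultra{\nu}\circ\ultra{\omega}$, so $\ultra{\nu\cdot\omega}(\bd X)=\ultra{\nu}(\ultra{\omega}(\bd X))=\ultra{\nu}(\{p,n\})=\{\attr{h}\}$, a single point; thus $\bd X$ is strongly collapsible.

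The main obstacle is precisely the second move: an operator built from $g$ alone cannot finish, since it pins down $n$ and, when iterated, can at best $2$-cycle the pair $\{p,n\}$, so one genuinely needs an independent rank one element, which is what $\card{\bd X}\ge 3$ buys. If one prefers not to quote north--south dynamics, it can be replaced throughout by $\pi$-convergence (Theorem \ref{pi-convergence}): Proposition \ref{prop:Ballmann-Buyalo} supplies a Tits-isolated point $\xi\in\bdTits X$, and since $\bdTits X$ admits geodesics between points at Tits distance $<\pi$, every $z\ne\xi$ satisfies $\Tits{z}{\xi}\ge\pi$; an $\omega$ pulling from $\xi$ then has $\connect{\omega}{\xi}{\attr{\omega}}$, and $\pi$-convergence contracts $\bd X\minus\{\xi\}$ onto $\attr{\omega}$, after which the same second move (collapsing the residual finite set) applies verbatim.
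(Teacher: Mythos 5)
Your two-move scheme and the splicing via the Diagonal Principle match the paper's overall strategy: the paper likewise first contracts $\bd X\minus\{n\}$ onto $p$ using an ultrafilter supported on the powers of the rank one element $g$, and then composes with a second operator that absorbs the leftover pair. The gap is in your justification that $G$ has no finite orbit in $\bd X$, which is the load-bearing step for producing $h$. From a finite orbit $O$ you correctly deduce $O\subseteq\{p,n\}$, but the next step, ``so a finite-index subgroup would fix $p$ and $n$,'' fails when $O$ is a single point: if $G$ fixes $p$ (say) while the orbit of $n$ is infinite, nothing forces any finite-index subgroup to fix $n$, and your parallel-set argument (which genuinely needs invariance of the union of geodesics from $n$ to $p$) does not apply. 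This is not a removable technicality for your route: if $G$ fixes $p$, then north--south dynamics shows every rank one element of $G$ has $p$ among its endpoints (any boundary fixed point of a rank one isometry is one of its two axis endpoints), so no rank one $h$ with $\{\attr{h},\repel{h}\}\cap\{p,n\}=\varnothing$ exists and your second move cannot be carried out at all. Hence the (virtually) global fixed point scenario must be excluded by some independent input, which your proposal does not supply.

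The paper does exactly this in the first line of its proof, citing Ruane's result: a finite-index subgroup fixing a point of $\bd X$ forces $G$ to split virtually as $H\times\ZZ$, hence to have higher rank, contradicting the rank one hypothesis. Granting that remark, the rest of your argument is sound -- the Neumann coset-covering argument over conjugates of $g$, the collapse of $\{p,n\}$ by powers of $h$, and the composition $\ultra{\nu\cdot\omega}=\ultra{\nu}\circ\ultra{\omega}$ -- and it is in fact a more direct second move than the paper's, which instead runs a case analysis on minsets $M\in\minG$, using duality $\hsm{D}(n)$, visibility and the Tits-isolation of $n$ to produce either a rank one element whose repelling point lies near a third point $z$, or a self-dual point $z$ with $\connect{\nu}{z}{z}$ that absorbs both $n$ and $p$. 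To close your gap, either quote the Ruane splitting theorem as the paper does, or prove separately that in a properly discontinuous action two rank one elements cannot share exactly one endpoint (so that a fixed $p$ with $n$ not virtually fixed is impossible); as written, the proof is incomplete.
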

\begin{proof} First of all we remark that $G$ cannot possibly contain a finite-index subgroup fixing a point of $\bd X$ (otherwise apply the result of Ruane telling us that $G$ virtually splits as a product of the form $H\times\ZZ$ and hence has higher rank).

Let $g\in G$ be a rank one element and let $p$ and $n$ denote the attracting and repelling fixed points of $g$, respectively. Pick an $\omega\in\beta G$ with $\omega\left\{g^n\,\big|\,n\geq k\right\}=1$ for all $k\in\ZZ$. Then $\connect{\omega}{n}{p}$ holds, implying $\ultra{\omega}q=p$ whenever $q\in\bd X\minus\{n\}$. In addition, $\ultra{\omega}$ fixes both $n$ and $p$.

{\bf First suppose some $M\in\minG$ avoids $n$. } Then $p=\ultra{\omega}M\in M$. Since $M=\{p\}$ is impossible, there is a point $z\in M\minus\{n,p\}$. Since $z$ is visible from $n$ ($n$ is isolated in $\bdTits X$), we must have $\hsm{D}(n)\subseteq\cl{G\cdot z}\subseteq M$. Thus, $z\in\hsm{D}(n)$, by the minimality property of $M$.

Let $U,V,W$ be closed, pairwise disjoint cone neighbourhoods of $n,p,z$ respectively. Since $z$ and $n$ are joined by a rank one geodesic, there exists a rank one element $h\in G$ with $h^\infty\in U$ and $h^{-\infty}\in W$. The same as earlier in the case of $g$, there exists $\nu\in\beta G$ fixing both $h^{\pm\infty}$ and satisfying $\connect{\nu}{h^{-\infty}}{h^\infty}$ and hence $\ultra{\nu}q=h^\infty$ whenever $q\neq h^{-\infty}$. In particular, $\ultra{\nu}n=\ultra{\nu}p=h^\infty$, and we conclude that $\ultra{\nu\cdot\omega}\bd X=\{h^\infty\}$, as desired.

{\bf If some $M\in\minG$ avoids $p$, } then a symmetric argument to the above provides us with the desired collapse.

{\bf Assume every $M\in\minG$ contains both $n$ and $p$. } Then there is only one $M\in\minG$. However, $M\neq\{n,p\}$ so there is a point $z\in M\minus\{n,p\}$. Since $\hsm{D}(z)$ is closed and $G$-invariant, $\hsm{D}(z)$ contains $M$. However, with $n$ visible from $z$ we also conclude $\hsm{D}(z)\subseteq\cl{G\cdot n}=M$. In particular, $z$ is self-dual and there is $\nu\in\beta G$ with $\connect{\nu}{z}{z}$. Finally, since both $n$ and $p$ are visible from $z$, we have $\ultra{\nu}n=\ultra{\nu}p=z$, and hence $\ultra{\nu\cdot\omega}\bd X=\{z\}$. We are done.
\end{proof}
\begin{center}
HENCEFORTH, $G$ HAS HIGHER RANK UNLESS STATED OTHERWISE
\end{center}
An incompressible set is obviously non-collapsible. We take some time to study the interplay between compressibility and collapsibility. For that we need --
\begin{defn}[Compression] Let $x,y\in\bd X$ and $\omega\in\UF$. Define the $\omega$-compression of $x,y$ to be:
\begin{displaymath}
	\comp{\omega}{x}{y}=\Tits{x}{y}-\Tits{\ultra{\omega}x}{\ultra{\omega}y}.
\end{displaymath}
The $G$-compression of $x,y$ will be defined as
\begin{displaymath}
	\comp{G}{x}{y}=\max_{\omega\in\beta G}\comp{\omega}{x}{y}.
\end{displaymath}
If $\comp{\omega}{x}{y}=\comp{G}{x}{y}$ we will say that $\omega$ achieves maximal compression of the pair $x,y$.
\end{defn}
Observe that the $G$-compression of $x$ and $y$ is well-defined (always achieved): indeed, the function $\omega\mapsto\comp{\omega}{x}{y}$ is upper semi-continuous, so it must achieve its supremum on the compact space $\beta G$. Another observation is that
\begin{displaymath}
	\comp{\nu\cdot\omega}{x}{y}=
	\comp{\omega}{x}{y}+\comp{\nu}{\ultra{\omega}x}{\ultra{\omega}y}\,,
\end{displaymath}
which immediately implies:
\begin{lemma} Either there exists a non-degenerate incompressible set in $\bdTits X$, or any pair of points in $\bd X$ is collapsible.
\end{lemma}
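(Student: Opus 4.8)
The plan is to leverage two facts recorded just above the statement: the additivity of compression, $\comp{\nu\cdot\omega}{x}{y}=\comp{\omega}{x}{y}+\comp{\nu}{\ultra{\omega}x}{\ultra{\omega}y}$ (itself a consequence of the Diagonal Principle $\ultra{\nu\cdot\omega}=\ultra{\nu}\ultra{\omega}$), and the fact that the $G$-compression $\comp{G}{x}{y}$ is always \emph{attained} by some ultra-filter (upper semi-continuity plus compactness of $\beta G$). Throughout I would use that, since $G$ has higher rank, $\diam\bdTits X\leq 2\pi$, so all Tits distances — hence all compressions — are finite nonnegative reals and there are no $\infty-\infty$ ambiguities; this is the only place the standing hypothesis is needed. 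So, assume $\bdTits X$ has no non-degenerate incompressible subset; the goal is to show every pair $x,y\in\bd X$ is collapsible, i.e. $\ultra{\omega}x=\ultra{\omega}y$ for some $\omega\in\beta G$, equivalently $\comp{G}{x}{y}=\Tits{x}{y}$.

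First I would fix a pair $x,y$ and pick $\omega_0\in\beta G$ achieving maximal compression, $\comp{\omega_0}{x}{y}=\comp{G}{x}{y}$; set $x_0=\ultra{\omega_0}x$ and $y_0=\ultra{\omega_0}y$. The claim is that $x_0=y_0$. Suppose not: then $\{x_0,y_0\}$ is a two-point, hence non-degenerate, subset of $\bdTits X$, so by the hypothesis it is not incompressible, and the only pair it contains — namely $x_0,y_0$ — must be compressible, yielding $\nu\in\beta G$ with $\comp{\nu}{x_0}{y_0}>0$. Substituting into the additivity identity with the composite ultra-filter $\nu\cdot\omega_0$ gives $\comp{\nu\cdot\omega_0}{x}{y}=\comp{\omega_0}{x}{y}+\comp{\nu}{x_0}{y_0}>\comp{\omega_0}{x}{y}=\comp{G}{x}{y}$, contradicting the maximality of $\comp{G}{x}{y}$. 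Hence $x_0=y_0$, so $\omega_0$ collapses the pair $x,y$; as $x,y$ were arbitrary, the second alternative holds.

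I do not expect a genuine obstacle here: once the additivity identity and the attainment of $\comp{G}{\cdot}{\cdot}$ are in hand (both just established), this is a short extremal argument. The only mild points of care are the finiteness of the quantities involved — handled, as noted, by the higher-rank assumption — and the observation that for the contradiction one does not need a \emph{maximal} incompressible set: a single non-degenerate incompressible pair already violates the hypothesis (and, if one wants to land in $\II$, Zorn's lemma upgrades it to a maximal one).
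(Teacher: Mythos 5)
Your argument is correct and is essentially the paper's own proof: take $\omega_0$ attaining $\comp{G}{x}{y}$ and use the additivity identity $\comp{\nu\cdot\omega_0}{x}{y}=\comp{\omega_0}{x}{y}+\comp{\nu}{\ultra{\omega_0}x}{\ultra{\omega_0}y}$ to rule out $\ultra{\omega_0}x\neq\ultra{\omega_0}y$. If anything, your write-up is slightly more explicit than the paper's about why the contradiction forces $\ultra{\omega_0}x=\ultra{\omega_0}y$ (hence collapsibility), and about finiteness of the quantities via the higher-rank diameter bound.
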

\begin{proof} Suppose that no pair of distinct points in $X$ is incompressible. Then for every pair of distinct points $x,y\in\bd X$ we have $\comp{G}{x}{y}>0$. Let $\omega\in\beta G$ achieve maximal compression for $x,y$. Then for $\nu\in\UF$ achieving maximal compression of the pair $\ultra{\omega}x,\ultra{\omega}y$ we have: 
\begin{eqnarray*}
\comp{\nu\cdot\omega}{x}{y}
	&=&\comp{\omega}{x}{y}+\comp{\nu}{\ultra{\omega}x}{\ultra{\omega}y}\\
	&=&\comp{G}{x}{y}+\comp{G}{\ultra{\omega}x}{\ultra{\omega}y}\\
	&>&\comp{G}{x}{y}
\end{eqnarray*}
-- a contradiction.
\end{proof}
This is perhaps an overly fancy way of stating the obvious, but there are interesting consequences to this approach. One is a clear linkage between compressibility and strong (!!) collapsibility:
\begin{prop} If $\bd X$ contains no non-degenerate incompressible set, then $\bd X$ is strongly collapsible.\ep
\end{prop}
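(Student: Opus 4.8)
The plan is to recast strong collapsibility of $\bd X$ as a finite-intersection-property statement over the compact space $\beta G$, feeding on the preceding lemma, which --- since $\bd X$ contains no non-degenerate incompressible set --- supplies, for each pair of distinct points $p,q\in\bd X$, an ultra-filter $\omega\in\beta G$ with $\ultra{\omega}p=\ultra{\omega}q$ (the pair $p,q$ is proximal).

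Concretely, for distinct $p,q\in\bd X$ put $C_{p,q}=\left\{\omega\in\beta G\,:\,\ultra{\omega}p=\ultra{\omega}q\right\}$. I would first record two easy facts about these sets. Each $C_{p,q}$ is non-empty --- this is precisely the proximality of $p,q$ just recalled. And each $C_{p,q}$ is closed: by corollary \ref{continuous orbit maps} the maps $\omega\mapsto\ultra{\omega}p$ and $\omega\mapsto\ultra{\omega}q$ are continuous from $\beta G$ into the compact Hausdorff space $\bdCone X$, hence so is $\omega\mapsto(\ultra{\omega}p,\ultra{\omega}q)$, and $C_{p,q}$ is the preimage of the closed diagonal of $\bdCone X\times\bdCone X$. (It is essential here that we use the cone topology, in which the orbit maps are continuous and $\bdCone X$ is compact; the Tits topology would not serve.)

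The core of the argument is that the family $\{C_{p,q}\}_{p\neq q}$ has the finite intersection property. Given finitely many distinct pairs $(p_1,q_1),\dots,(p_n,q_n)$, I would construct $\omega\in\bigcap_{i\le n}C_{p_i,q_i}$ by induction on $n$. Having produced $\omega_{n-1}$ collapsing the first $n-1$ pairs, consider $a=\ultra{\omega_{n-1}}p_n$ and $b=\ultra{\omega_{n-1}}q_n$ in $\bd X$; if $a=b$ set $\omega_n=\omega_{n-1}$, and otherwise use proximality of the pair $a,b$ to pick $\mu$ with $\ultra{\mu}a=\ultra{\mu}b$ and set $\omega_n=\mu\cdot\omega_{n-1}$. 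By the Diagonal Principle, $\ultra{\omega_n}=\ultra{\mu}\circ\ultra{\omega_{n-1}}$, so $\omega_n$ still collapses each earlier pair (the two images under $\ultra{\omega_{n-1}}$ already agreed, so applying $\ultra{\mu}$ preserves the equality) and it now collapses the $n$-th pair as well. Finally, compactness of $\beta G$ gives $\bigcap_{p\neq q}C_{p,q}\neq\varnothing$, and any $\omega$ in this intersection satisfies $\ultra{\omega}p=\ultra{\omega}q$ for all distinct $p,q\in\bd X$ --- that is, $\ultra{\omega}(\bd X)$ is a singleton, so $\bd X$ is strongly collapsible.

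The one genuine point to get right is the passage from pointwise proximality (one ultra-filter per pair) to a single ultra-filter working for the whole of $\bd X$; a tempting but fiddly alternative is to take $\omega_0$ minimizing the Tits-diameter of $\ultra{\omega}(\bd X)$ and try to shrink it further, which stumbles on the fact that compressing one extremal pair need not lower a supremum realized by a continuum of near-extremal pairs. The finite-intersection route sidesteps this: the boosting factors as a purely finite inductive step --- which never requires taking limits of ultra-filters --- followed by a single invocation of the compactness of $\beta G$. (Note that the argument uses neither the higher-rank assumption nor any special feature of $\bdCone X$ beyond the established continuity of the orbit maps, so it goes through verbatim whenever $\bd X$ has no non-degenerate incompressible subset.)
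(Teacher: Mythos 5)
Your proof is correct, but it follows a genuinely different route from the paper's. You work entirely inside $\beta G$: starting from the preceding lemma (every pair of distinct points of $\bd X$ is collapsed by some ultra-filter), you form the sets $C_{p,q}=\{\omega\in\beta G : \ultra{\omega}p=\ultra{\omega}q\}$, observe they are closed (continuity of the orbit maps $\omega\mapsto\ultra{\omega}p$ into the compact Hausdorff $\bdCone X$, plus closedness of the diagonal) and non-empty, establish the finite intersection property by a finite induction whose boosting step is exactly the Diagonal Principle $\ultra{\mu\cdot\omega}=\ultra{\mu}\circ\ultra{\omega}$, and then invoke compactness of $\beta G$ once to get a single $\omega$ collapsing all of $\bd X$. (The only unstated point is that $\ultra{\omega}$ maps $\bd X$ into $\bd X$, so the inductive step may legitimately quote pairwise collapsibility for $a,b$; this is immediate since $\bd X$ is closed and $G$-invariant in $\hat X$.) The paper instead deduces from the preceding lemma that $\bdCone X$ has a unique minimal closed $G$-invariant set, imports lemma 2.4 of Ballmann--Buyalo to control minimal sets in the products $(\bd X)^d$ and conclude that finite sets are strongly collapsible, upgrades this to Tits-compact sets via pointwise limits of the operators $\ultra{\omega}$, and finally applies Theorem A to replace $\bd X$ by a Tits-compact round sphere. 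Your argument is more elementary and self-contained: it needs neither Theorem A, nor the external Ballmann--Buyalo lemma, nor any Tits-metric or compactness considerations on the boundary, and it works verbatim whenever the pairwise collapsibility hypothesis holds. What the paper's longer route buys is the intermediate structure it exhibits along the way (uniqueness of the minimal set, strong collapsibility of finite and Tits-compact subsets), which foreshadows the technique reused in the Strong Folding Lemma; in fact your FIP-in-$\beta G$ device is essentially the same mechanism the paper deploys there with a countable dense set, so your proof fits naturally into the paper's toolkit while shortening this particular proposition.
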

\begin{proof} Assume that $\bd X$ contains no non-degenerate incompressible set. By the preceding lemma, $\bdCone X$ has a unique minimal non-empty closed $G$-invariant subset $M$. Moreover, $\Delta_M=\left\{(x,x)\,\big|\,x\in M\right\}$ is the only minimal non-empty closed $G$-invariant subset of $M\times M$ (the action of $G$ is the diagonal action). 

By lemma 2.4 of \cite{[Ballmann-Buyalo-2pi]}, $\Delta^d_M=\left\{(x,\ldots,x)\in (\bd X)^d\,\big|\,x\in M\right\}$ is then the only minimal non-empty closed $G$-invariant subset of $(\bd X)^d$ for all $d\geq 1$, implying that every finite subset $A\subset\bd X$ is strongly collapsible. 

By corollary \ref{pointwise convergence of G-omegas}, and by the fact that every $\ultra{\omega}$ is a continuous map of $\bdTits X$, this implies that any Tits-compact set $A\subset\bd X$ is strongly collapsible as well. 

Finally, theorem A produces an $\omega\in\UF$ for which $\ultra{\omega}(\bd X)$ is a round sphere $\SS$. Since $\SS$ is Tits-compact, there is a $\nu\in\UF$ with $\ultra{\nu}\SS$ a singleton. But then $\ultra{\nu\cdot\omega}(\bd X)$ is a singleton.
\end{proof}
\begin{lemma}\label{lemma:incompressibles contained in core} Suppose $A$ is a non-degenerate incompressible set and let $n\in\bdTits X$. Then $A\subseteq\Titsball{n}{\pi}$. In other words, every non-degenerate incompressible set is contained in $\core$.
\end{lemma}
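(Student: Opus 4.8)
The plan is to prove the contrapositive in the following sharp form: if some $a\in A$ and some $n\in\bdTits X$ satisfy $\Tits{a}{n}>\pi$, then $A$ contains a compressible pair, which contradicts incompressibility. Since $A$ is non-degenerate I would fix at the outset a second point $b\in A$ with $b\neq a$, so that $\Tits{a}{b}>0$. The whole argument will rest on one move: \emph{pull away from the far point $n$}. By the remarks following definition~\ref{defn:pulling}, since $G$ acts geometrically there is a non-principal $\omega\in\beta G$ pulling away from $n$, and for such an $\omega$ one has $\repel{\omega}=n$; write $p=\attr{\omega}$.

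Next I would feed $a$ and $b$ into $\pi$-convergence (theorem~\ref{pi-convergence}). For $x=a$ with $\theta=0$: from $\Tits{a}{\repel{\omega}}=\Tits{a}{n}>\pi$ one gets $\Tits{\ultra{\omega}a}{p}\leq 0$, i.e. $\ultra{\omega}a=p$. For $b$ I would split off the borderline case: if $\Tits{b}{n}\geq\pi$, theorem~\ref{pi-convergence} with $\theta=0$ again gives $\ultra{\omega}b=p$, so $(a,b)$ is a proximal --- hence compressible --- pair; if instead $\Tits{b}{n}<\pi$, then $\theta_b:=\pi-\Tits{b}{n}\in(0,\pi]$ and theorem~\ref{pi-convergence} yields $\Tits{\ultra{\omega}b}{p}\leq\theta_b=\pi-\Tits{b}{n}$. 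In both cases $\Tits{\ultra{\omega}a}{\ultra{\omega}b}\leq\max\{0,\ \pi-\Tits{b}{n}\}$.

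To close, I would invoke the triangle inequality for the Tits metric: $\pi<\Tits{a}{n}\leq\Tits{a}{b}+\Tits{b}{n}$ forces $\pi-\Tits{b}{n}<\Tits{a}{b}$, while $0<\Tits{a}{b}$ holds since $a\neq b$. Hence $\Tits{\ultra{\omega}a}{\ultra{\omega}b}<\Tits{a}{b}$, so $\{a,b\}\subseteq A$ is a compressible pair, contradicting the incompressibility of $A$. This proves $\Tits{a}{n}\leq\pi$ for all $a\in A$ and all $n\in\bdTits X$, which is precisely $A\subseteq\Titsball{n}{\pi}$; letting $n$ range over $\bdTits X$ shows that no $a\in A$ can be $\epsilon$-extreme for any $\epsilon>0$, so $A\subseteq\core$.

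I do not expect a genuine obstacle here --- the content is just pairing the ``pull from $n$'' trick with the triangle inequality. The only two points needing care are (i) confirming the pulling ultra-filter is non-principal so that $\attr{\omega},\repel{\omega}$ and $\pi$-convergence are legitimately available (this is built into the discussion after definition~\ref{defn:pulling}), and (ii) isolating the case $\Tits{b}{n}\geq\pi$ so that the parameter handed to theorem~\ref{pi-convergence} remains in the admissible range $[0,\pi]$.
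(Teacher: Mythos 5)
Your argument is correct, and it shares the paper's opening move --- take an ultra-filter pulling away from the far point $n$, so that $\repel{\omega}=n$, and feed the situation into $\pi$-convergence --- but it closes differently. The paper first notes that at most one point of $A$ can lie outside $\Titsball{n}{\pi}$, picks a second point $b\in A\cap\Titsball{n}{\pi}$, and then invokes the suspension machinery: corollary \ref{UFs map to suspensions} to place $\ultra{\omega}b$ in $\suspend{\ultra{\omega}n}{p}$ and lemma \ref{embedding lemma} to get $\Tits{\ultra{\omega}n}{\ultra{\omega}b}=\Tits{n}{b}$, which together with incompressibility of $\{a,b\}$ forces $\Tits{n}{b}+\Tits{b}{a}=\pi$ and contradicts $\Tits{n}{a}>\pi$. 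You instead apply theorem \ref{pi-convergence} twice, with $\theta=0$ for $a$ and $\theta=\pi-\Tits{b}{n}$ for $b$ (splitting off $\Tits{b}{n}\geq\pi$, where the pair is outright proximal), and then the triangle inequality $\Tits{a}{b}\geq\Tits{a}{n}-\Tits{b}{n}>\pi-\Tits{b}{n}\geq\Tits{\ultra{\omega}a}{\ultra{\omega}b}$ exhibits $\{a,b\}$ as compressible. This bypasses corollary \ref{UFs map to suspensions} and lemma \ref{embedding lemma} entirely (and with them the side conditions the paper leans on, such as finiteness of $\Tits{\ultra{\omega}n}{p}$ and boundedness of $\bdTits X$), at the cost of nothing: the quantified form of $\pi$-convergence already contains the needed estimate. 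Your two flagged care points are handled correctly --- a pulling ultra-filter is non-principal because the sets $A_{x,\gamma,M,C}$ have empty intersection under a proper action, and your case split keeps $\theta$ in $[0,\pi]$; note also that the proximal case uses $\Tits{a}{b}>0$, which you secured by choosing $b\neq a$ in the non-degenerate set $A$.
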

\begin{proof} Since $A$ is non-degenerate and incompressible, $G$ must be of higher rank, and $\bdTits X$ is bounded.

Let $\omega\in\beta G$ be chosen to pull away from $n$, and let $\connect{\omega}{n}{p}$. By $\pi$-convergence, $\ultra{\omega}m=p$ whenever $\Tits{n}{m}\geq\pi$. Since $A$ is incompressible, this means at most one $a\in A$ satisfies $\Tits{n}{a}\geq\pi$.

Suppose $a\in A$ satisfies $\Tits{n}{a}>\pi$ and denote $q=\ultra{\omega}n$. We need to derive a contradiction.

The case $n\in A$ is impossible, because we already know $\diam(A)\leq\pi$. Since $A$ is non-degenerate and $a$ is the only point outside of $\Titsball{n}{\pi}$, there is a point $b\in A\cap\Titsball{n}{\pi}$.

Then, by corollary \ref{UFs map to suspensions}, $b$ is mapped to a point in $\suspend{q}{p}\minus\{p,q\}$. However, $\Tits{q}{\ultra{\omega}b}=\Tits{n}{b}$ by lemma \ref{embedding lemma}, while $\Tits{\ultra{\omega}b}{p}=\Tits{b}{a}$ by incompressibility. Thus $\Tits{n}{a}\leq\pi$ -- a contradiction. 
\end{proof}
Recall that $G$ has rank one iff $\core$ is empty (by proposition \ref{prop:Ballmann-Buyalo}). At the same time, $G$ having rank one implies $\II$ is empty by lemma \ref{rank one is strongly collapsible}. In view of the rank rigidity conjecture we would like to raise the following --
\begin{conjecture}\label{conj:strongly collapsible boundary implies rank one} $\bdCone X$ is strongly collapsible if and only if $G$ has rank one. 
\end{conjecture}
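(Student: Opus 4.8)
One implication is already in hand: Lemma~\ref{rank one is strongly collapsible} shows that if $G$ has rank one (and $\card{\bd X}\geq 3$, as we are assuming) then $\bdCone X$ is strongly collapsible. So the content of the conjecture is the converse, and the plan is to recast it first as a statement about $\II$, then as a statement about proximality, and finally to isolate the one geometric input that remains open.

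\textbf{Step 1 (reduction to $\II$).} If $A\subseteq\bd X$ is non-degenerate and incompressible, then every $\ultra{\omega}$ restricts to an isometry of $A$ onto $\ultra{\omega}(A)$ -- immediate from the definition, since $\Tits{\ultra{\omega}x}{\ultra{\omega}y}\leq\Tits{x}{y}$ always holds and equality is forced for $x,y\in A$. Hence $\ultra{\omega}(\bd X)\supseteq\ultra{\omega}(A)$ always contains at least two points, so $\bdCone X$ is not strongly collapsible. Conversely, the proposition just before Lemma~\ref{lemma:incompressibles contained in core} says that a Tits boundary with no non-degenerate incompressible set \emph{is} strongly collapsible. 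Thus ``$\bdCone X$ strongly collapsible'' $\Leftrightarrow$ ``$\II=\varnothing$'', and since rank one forces $\II=\varnothing$ (by Lemma~\ref{rank one is strongly collapsible} together with the isometry remark, or by the fact noted in the introduction that all pairs on a rank-one boundary are proximal), the conjecture is equivalent to the single implication: \emph{if $G$ has higher rank then $\II\neq\varnothing$.}

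\textbf{Step 2 (reduction to a non-proximal pair, via an extremal argument).} By Zorn's lemma, $\II\neq\varnothing$ as soon as $\bd X$ contains one incompressible pair of distinct points, and I claim such a pair exists if and only if some two distinct points of $\bd X$ are not proximal. One direction is trivial (an incompressible pair of distinct points is never proximal). For the other, let $x_0\neq y_0$ be non-proximal and consider the function $\omega\mapsto\Tits{\ultra{\omega}x_0}{\ultra{\omega}y_0}$ on $\beta G$; it is lower semi-continuous (the Tits metric is lower semi-continuous in the cone topology and $\omega\mapsto\ultra{\omega}z$ is cone-continuous by Corollary~\ref{continuous orbit maps}), hence attains a minimum $\delta_0$ at some $\omega_0$, with $\delta_0>0$ because $x_0,y_0$ are not proximal. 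Put $x=\ultra{\omega_0}x_0$ and $y=\ultra{\omega_0}y_0$; if some $\nu\in\beta G$ compressed $\{x,y\}$ then the Diagonal Principle would give $\Tits{\ultra{\nu\cdot\omega_0}x_0}{\ultra{\nu\cdot\omega_0}y_0}=\Tits{\ultra{\nu}x}{\ultra{\nu}y}<\delta_0$, contradicting minimality. So $\{x,y\}$ is an incompressible pair. Combining Steps~1--2, the conjecture is equivalent to: \emph{$G$ has higher rank if and only if $\bd X$ contains two distinct points that are not proximal} -- the ``if'' direction being again the introduction's fact about rank-one boundaries.

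\textbf{Step 3 (the remaining geometric input, and where I expect the difficulty).} It remains to show that a geometric action of a higher-rank $G$ on $X$ always produces two distinct non-proximal boundary points. In the reducible case this should be easy to do directly -- for instance, two points lying in distinct factors of a join decomposition of $\bdTits X$ (after passing to a finite-index subgroup preserving the splitting) are at a Tits distance that every $\ultra{\omega}$ preserves, as is the pair of poles of a Euclidean factor; and the circumradius-$<\pi/2$ situation is covered by the Papasoglu--Swenson/Ruane discussion following Lemma~\ref{quantified extremality}. The irreducible higher-rank case is the genuine obstacle (and it is why the statement is a conjecture rather than a theorem). My suggested attack is to take a maximal-dimensional flat $F_0$ -- which has $\dim F_0\geq 2$, since $d=0$ would force $X$ hyperbolic and $G$ rank one -- and then use the Folding Lemma~\ref{basic tool} together with the pulling-flats Lemma~\ref{pulling flats} to keep candidate pairs inside boundary spheres isometric to $\bd F_0$; the aim would be to show that a pair of points of $\bd F_0$ in sufficiently general position (say at Tits distance $\pi/2$, the analogue of a pair of opposite regular points in a symmetric space or building, where the claim is classical) can never be collapsed, since a collapsing $\omega$ would, via $\pi$-convergence (Theorem~\ref{pi-convergence}) and Corollary~\ref{UFs map to suspensions}, fold $\bd F_0$ into a suspension $\suspend{\ultra{\omega}n}{p}$ in a way incompatible with $F_0$ carrying the full coarse $G$-orbit only on itself. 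Turning this heuristic incompatibility into a proof -- equivalently, ruling out that the boundary of a maximal flat is entirely proximal under a geometric higher-rank action -- is the heart of the matter and the step I expect to be hard.
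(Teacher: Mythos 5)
This statement is one of the paper's open conjectures: the paper offers no proof of it, and your proposal does not close it either. What you do establish is correct but is essentially a repackaging of material already in the paper. The implication ``rank one $\Rightarrow$ strongly collapsible'' is Lemma~\ref{rank one is strongly collapsible}; the equivalence ``strongly collapsible $\Leftrightarrow\II=\varnothing$'' is the combination of the proposition preceding Lemma~\ref{lemma:incompressibles contained in core} with the observation that $\ultra{\omega}$ restricts to an isometry on any incompressible set; and your Step~2 extremal argument (lower semi-continuity of $\omega\mapsto\Tits{\ultra{\omega}x_0}{\ultra{\omega}y_0}$ on the compact space $\beta G$, plus the Diagonal Principle to rule out further compression at a minimizer) is exactly the mechanism behind the paper's lemma ``Either there exists a non-degenerate incompressible set in $\bdTits X$, or any pair of points in $\bd X$ is collapsible,'' i.e.\ the identity $\comp{\nu\cdot\omega}{x}{y}=\comp{\omega}{x}{y}+\comp{\nu}{\ultra{\omega}x}{\ultra{\omega}y}$. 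So Steps~1--2 give a correct reformulation: the conjecture is equivalent to ``higher rank $\Rightarrow\II\neq\varnothing$,'' which is precisely what the authors themselves point out when they say that affirming the conjecture means $\II$ is the only obstruction to rank one (and it is a weak form of Conjecture~\ref{conj:higher rank boundary is covered by incompressibles}).

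The genuine gap is Step~3, as you yourself acknowledge: producing a single non-proximal pair (equivalently, a non-degenerate incompressible pair) from the higher-rank hypothesis is the entire content of the conjecture, and your sketch does not supply an argument for it. Note also why the proposed attack is unlikely to work as stated: every tool available in the paper in this direction --- $1$-Lipschitzness of $\ultra{\omega}$ on $\bdTits X$, $\pi$-convergence (Theorem~\ref{pi-convergence}), folding into suspensions (Corollary~\ref{UFs map to suspensions}, Theorem~\ref{basic tool}), and pulling flats (Lemma~\ref{pulling flats}) --- yields only upper bounds on Tits distances, or isometric behaviour on sets \emph{already known} to be incompressible; none gives a lower bound preventing a chosen pair in $\bd F_0$ at distance $\pi/2$ from being compressed by some $\omega\in\beta G$. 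In particular, Lemma~\ref{pulling flats} controls $\ultra{\omega}\bd F_0$ only when $\omega$ pulls from a point of $\bd F_0$, whereas a hypothetical collapsing ultrafilter need not be of that form, so the ``heuristic incompatibility'' with $F_0$ carrying the coarse orbit is not an argument. The proposal should therefore be read as a correct reduction of the conjecture to its known hard core, not as a proof.
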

Affirming the conjecture will mean $\II$ is the only obstruction to $G$ having rank one.

Another aspect of lemma \ref{lemma:incompressibles contained in core} is that it proposes an approach to the Closing Lemma (conjecture \ref{conj:closing lemma}):
\begin{cor}\label{approach to closing lemma} Suppose $G$ is a group acting geometrically on a CAT(0) space $X$. If $\II$ covers $\bd X$, then $G$ has higher rank and $\diam\bdTits X=\pi$.
\end{cor}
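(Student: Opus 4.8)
The plan is to read off both conclusions from two results already in hand: the strong-collapsibility dichotomy for rank one (Lemma~\ref{rank one is strongly collapsible}) and the containment of non-degenerate incompressible sets in the core (Lemma~\ref{lemma:incompressibles contained in core}). Since $\II$ covers $\bd X$ and $\bd X$ has at least three points, $\II$ is non-empty, so there is a non-degenerate incompressible set $A$; as noted in the proof of Lemma~\ref{lemma:incompressibles contained in core} this already forces $G$ to have higher rank. (One can argue directly instead: if $G$ had rank one, Lemma~\ref{rank one is strongly collapsible} would supply $\omega\in\beta G$ collapsing all of $\bd X$ to a point, whence $\Tits{\ultra{\omega}x}{\ultra{\omega}y}=0<\Tits{x}{y}$ for the two distinct points $x,y\in A$, contradicting incompressibility of $A$.)

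Next I would establish $\core=\bd X$. By Lemma~\ref{lemma:incompressibles contained in core} every $A\in\II$ is contained in $\core$, so $\bd X=\bigcup\II\subseteq\core\subseteq\bd X$, forcing equality. Unwinding the definition of an extreme point, $\core=\bd X$ says that no $n\in\bd X$ is $\epsilon$-extreme for any $\epsilon>0$, i.e.\ $\bd X\subseteq\Titsball{n}{\pi+\epsilon}$ for all such $\epsilon$; letting $\epsilon\to 0$ gives $\Tits{n}{z}\le\pi$ for all $n,z\in\bd X$, hence $\diam\bdTits X\le\pi$.

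For the matching lower bound, I would use the fact that $X$ contains a geodesic line — the remark following Lemma~\ref{pulling lines} already guarantees that every point of $\bd X$ is visible from some other point — together with the standard fact that the two endpoints of a geodesic line in a CAT(0) space lie at Tits distance at least $\pi$. Thus $\diam\bdTits X\ge\pi$, and combining the two estimates yields $\diam\bdTits X=\pi$. I do not anticipate a real obstacle here: the argument is a short assembly of the cited lemmas, the only care needed being the $\epsilon$-bookkeeping in passing from $\core=\bd X$ to the diameter bound and the (routine) verification that a geodesic line is available for the lower estimate.
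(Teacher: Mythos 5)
Your proof is correct and follows essentially the same route the paper intends: the paper states this corollary without a written proof, as an immediate consequence of Lemma~\ref{lemma:incompressibles contained in core} (which gives $\Tits{n}{z}\le\pi$ for every $z$ covered by an incompressible set) together with the earlier observation that rank one forces $\II=\varnothing$ via Lemma~\ref{rank one is strongly collapsible}. Your only addition is making explicit the lower bound $\diam\bdTits X\ge\pi$ from visibility (remark after Lemma~\ref{pulling lines}), which the paper leaves implicit and which is correctly handled.
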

In fact, it suffices to require a weaker condition to affirm the Closing Lemma:
\begin{cor}\label{easier approach to closing lemma} Suppose $G$ is a group acting geometrically on a CAT(0) space $X$ and let $\SS$ be the sphere produced by theorem A. If $\II$ covers $\SS$, then $G$ has higher rank and $\diam\bdTits X=\pi$.
\end{cor}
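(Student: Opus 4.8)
The plan is a proof by contradiction on the existence of an extreme point: if $\bdTits X$ had one, Theorem~\ref{basic tool} (Theorem~A) would let us transfer the resulting ``extremeness obstruction'' onto the round sphere $\SS$ itself, where the hypothesis that $\II$ covers $\SS$, together with Lemma~\ref{lemma:incompressibles contained in core}, makes it untenable.

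First the two easy halves. Since $\SS=\bd F$ is the boundary of a flat of dimension $\geq 1$, it is a nonempty round sphere, so the assumption that $\II$ covers $\SS$ forces $\II\neq\varnothing$. Were $G$ of rank one, Lemma~\ref{rank one is strongly collapsible} would make $\bd X$ strongly collapsible, hence every non-degenerate subset of $\bd X$ compressible and $\II$ empty; therefore $G$ has higher rank (which, as noted in the last paragraph, also forces $d\geq 1$). Also $\diam\bdTits X\geq\pi$ unconditionally, since by the remark following Lemma~\ref{pulling lines} every point of $\bd X$ is visible from another, and a visible pair sits at Tits distance at least $\pi$. It remains to show $\diam\bdTits X\leq\pi$, i.e.\ that no point of $\bd X$ is extreme.

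Suppose $p\in\bd X$ is $\epsilon$-extreme for some $\epsilon>0$, and let $F$, $\SS=\bd F$ and $\omega\in\beta G$ be as supplied by Theorem~\ref{basic tool}, so $\ultra{\omega}(\bd X)=\SS$. The set $\hsm{D}(p)$ is nonempty (e.g.\ the attracting point of any ultrafilter pulling away from $p$ lies in it) and, by Lemma~\ref{duality properties}, closed and $G$-invariant; hence it contains some $M\in\minG$. As $M$ is closed and $G$-invariant it is $\beta G$-invariant, so $\varnothing\neq\ultra{\omega}(M)\subseteq M\cap\SS=:N$. Fix $n\in N$. Since $\II$ covers $\SS$ and every member of $\II$ lies in $\core$ by Lemma~\ref{lemma:incompressibles contained in core}, we get $\SS\subseteq\core$. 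On the other hand $n\in M\subseteq\hsm{D}(p)$ and $p$ is $\epsilon$-extreme, so Lemma~\ref{quantified extremality} gives that every $z\in\bd X\minus\Titsball{n}{\pi-\epsilon}$ is extreme, i.e.\ $\core\subseteq\Titsball{n}{\pi-\epsilon}$; combining, $\SS\subseteq\Titsball{n}{\pi-\epsilon}$. But $n$ lies on the round $d$-sphere $\SS$ with $d\geq 1$, so its antipode $-n\in\SS$ satisfies $\Tits{n}{-n}=\pi>\pi-\epsilon$, contradicting $-n\in\Titsball{n}{\pi-\epsilon}$. Hence $\bd X$ has no extreme point, so $\diam\bdTits X\leq\pi$, and with the lower bound $\diam\bdTits X=\pi$.

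I expect the only delicate points to be (i) that $N=M\cap\SS$ is genuinely nonempty -- which is exactly what Theorem~\ref{basic tool} buys, applied to the $\beta G$-invariant set $M$; and (ii) that $\SS$ is honestly an isometrically embedded round sphere of dimension $d\geq 1$, so that antipodal pairs lie at Tits distance exactly $\pi$ -- the degenerate case $d=0$ being excluded because it would make $G$ word-hyperbolic, hence of rank one, contrary to what was shown above (cf.\ the opening of the proof of Theorem~\ref{basic tool}). Conceptually, this is precisely the improvement over Corollary~\ref{approach to closing lemma}: extremeness anywhere in $\bd X$ is made to propagate, via $\pi$-convergence (Lemma~\ref{quantified extremality}) applied at a point of $M\cap\SS$, into a statement about $\SS$ alone.
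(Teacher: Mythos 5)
Your argument is correct and follows essentially the same route as the paper: take $M\in\minG$ inside $\hsm{D}(p)$, use Theorem~A to get a point $n\in M\cap\SS$, and apply Lemma~\ref{quantified extremality} to see the antipode of $n$ on $\SS$ is extreme, contradicting $\SS\subseteq\core$ forced by $\II$ covering $\SS$ and Lemma~\ref{lemma:incompressibles contained in core}. Your additional justifications (higher rank via Lemma~\ref{rank one is strongly collapsible}, and the lower bound $\diam\bdTits X\geq\pi$ via visibility) are correct details the paper leaves implicit.
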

\begin{proof} We apply lemma \ref{quantified extremality} again. If some $p\in\bd X$ is an $\epsilon$-extreme point for some positive $\epsilon$, it suffices to find a point $n\in\SS\cap\hsm{D}(p)$ in order to conclude that the antipode of $n$ on $\SS$ must be extreme and $\II$ fails to cover $\SS$.

To find such a point, observe that $\hsm{D}(p)$ contains an element of $\minG$. Recalling that $\SS$ intersects every element of $\minG$ finishes the proof.
\end{proof}
Our conjecture is:
\begin{conjecture}\label{conj:higher rank boundary is covered by incompressibles} If $G$ has higher rank, then $\II$ covers $\bd X$.
\end{conjecture}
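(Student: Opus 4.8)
The plan is to prove the equivalent assertion that \emph{every} $\xi\in\bd X$ admits a point $\eta\neq\xi$ with $\comp{G}{\xi}{\eta}=0$. Granting this, $\{\xi,\eta\}$ is a non-degenerate incompressible set, and Zorn's lemma embeds it in a maximal one, which then lies in $\II$ and covers $\xi$. So the whole difficulty is to manufacture, for each $\xi$, a ``compression partner'' that cannot be compressed together with it.

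The first half of the plan builds a large supply of incompressible sets from the algebra of $\beta G$ and disposes of all points lying in minsets. Fix $A_0\in\IImax$ and $\nu\in\beta G$ with $\ultra{\nu}(\bd X)=A_0$, as supplied by Theorem D. For any $\sigma\in\beta G$ the operator $\ultra{\sigma}$ restricts to an isometry on $A_0$, and $\ultra{\sigma}(A_0)$ is again incompressible: writing $x=\ultra{\sigma}a$, $y=\ultra{\sigma}b$ with $a,b\in A_0$, one has $\ultra{\tau}x=\ultra{\tau\sigma}a$ and $\ultra{\tau}y=\ultra{\tau\sigma}b$ for every $\tau\in\beta G$, so incompressibility of $A_0$ (applied to $\tau\sigma$ and to $\sigma$) gives $\Tits{\ultra{\tau}x}{\ultra{\tau}y}=\Tits{a}{b}=\Tits{x}{y}$; a volume comparison among $d'$-polytopes even shows $\ultra{\sigma}(A_0)\in\IImax$, and an analogous argument shows that the image $\ultra{u}(\bd X)$ of any minimal idempotent $u\in\beta G$ is incompressible as well. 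Now if $\xi\in M$ for some $M\in\minG$, pick $\zeta\in A_0\cap M$ (the intersection is non-empty by Theorem D); then $\clCone{G\cdot\zeta}=M\ni\xi$, whence $\xi=\ultra{\rho}\zeta\in\ultra{\rho}(A_0)$ for a suitable $\rho\in\beta G$. So $\II$ already covers the union of all minsets.

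The remaining, and I expect hard, case is that of a \emph{transient} point $\xi$, one lying in no $M\in\minG$; should no such point exist the proof would already be complete, but I see no reason to expect that. Here I would argue by contradiction: assuming $\xi$ lies in no non-degenerate incompressible set, so $\comp{G}{\xi}{\eta}>0$ for all $\eta\neq\xi$, I would use the additivity $\comp{\nu\cdot\omega}{\xi}{\eta}=\comp{\omega}{\xi}{\eta}+\comp{\nu}{\ultra{\omega}\xi}{\ultra{\omega}\eta}$, the upper semi-continuity of compression, and the fact that some $\ultra{\omega_0}$ carries all of $\bd X$ onto a Tits-\emph{compact} round sphere (Theorem \ref{basic tool}), to iterate maximal compressions and obtain a single $\omega\in\beta G$ for which $\ultra{\omega}$ crushes an entire Tits-ball $\Titsball{\xi}{r}$, $r>0$, onto the point $\ultra{\omega}\xi$. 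It would then remain to show that such ``total local collapsibility'' is impossible for a higher rank group --- ideally by extracting from it a periodic rank one geodesic, or at least an isolated point of $\bdTits X$, and invoking Proposition \ref{prop:Ballmann-Buyalo}. This last step is essentially Conjecture \ref{conj:strongly collapsible boundary implies rank one} localized at $\xi$, and is where I expect the main obstacle to lie; already the preliminary step of upgrading ``$\comp{G}{\xi}{\eta}>0$ for each individual $\eta$'' to a uniform collapse of a whole ball requires a substitute for the missing compactness of $\bdTits X$, so genuinely new CAT(0) input --- beyond the ultra-limit machinery developed here --- appears to be needed.
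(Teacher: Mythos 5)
The statement you set out to prove is not proved anywhere in the paper: it is exactly Conjecture \ref{conj:higher rank boundary is covered by incompressibles}, left open by the authors, and your proposal --- by your own admission --- does not close it, so what you have is at best a partial reduction. The part that works is the coverage of minsets: by Proposition \ref{strong folding}(1) the set $\ultra{\rho}A_0$ is incompressible (indeed in $\IImax$) for every $\rho\in\beta G$, and since for $\xi\in M\in\minG$ and $\zeta\in A_0\cap M$ (Corollary \ref{special IImax element}) every point of $M=\clCone{G\cdot\zeta}$ has the form $\ultra{\rho}\zeta$ for some $\rho\in\beta G$, every point of every minset lies in an element of $\II$; this slightly sharpens Corollary \ref{special IImax element} and is a legitimate observation. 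But note that even this step silently assumes $\IImax\neq\varnothing$, i.e.\ that a higher rank group admits a non-degenerate incompressible pair. The paper does not establish this: by Lemma \ref{rank one is strongly collapsible} and the proposition following it, ``$\II=\varnothing$'' forces $\bd X$ to be strongly collapsible, and whether that can happen in higher rank is precisely the open Conjecture \ref{conj:strongly collapsible boundary implies rank one}. So your ``first half'' is already conditional.

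The genuine gap is the transient case, and the two steps you sketch there both fail as stated. First, additivity of compression plus upper semi-continuity lets you realize $\comp{G}{\xi}{\eta}$ for one pair at a time, but upgrading ``$\comp{G}{\xi}{\eta}>0$ for each $\eta$'' to a single $\omega$ collapsing a whole ball $\Titsball{\xi}{r}$ onto $\ultra{\omega}\xi$ requires a simultaneous (diagonal) argument over uncountably many partners; $\bdTits X$ is not compact, the closedness of $\{\ultra{\omega}\}_{\omega\in\beta G}$ under pointwise limits only handles countably many constraints via the countable-intersection trick of Proposition \ref{strong folding}, and the sphere of Theorem \ref{basic tool} does not obviously control how $\xi$ and its nearby points sit before folding. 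Second, even granting such a uniform local collapse, the concluding claim that ``total local collapsibility is impossible for a higher rank group'' is nothing but a localized form of Conjecture \ref{conj:strongly collapsible boundary implies rank one}; to invoke Proposition \ref{prop:Ballmann-Buyalo} you would need to exhibit a pair at Tits distance strictly greater than $\pi$ with one point in a minset (or an isolated point of $\bdTits X$), and no step of your outline produces either. So the proposal correctly identifies where the difficulty lies, but it does not supply the missing idea, and the statement remains open.
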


\subsection{The Geometry of Incompressible Sets}\label{subsec:geometry of incompressibles} We now turn to studying the geometry of individual elements of $\II$, with the aim to prove theorem C from the introduction.
\begin{lemma} If $A$ is an incompressible set, then its Tits closure $\clTits{A}$ is incompressible too.
\end{lemma}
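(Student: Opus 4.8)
The plan is to lean on the single regularity property of the operators $\ultra{\omega}$ that we already have in hand: each $\ultra{\omega}$ is $1$-Lipschitz for the Tits metric (this is exactly the inequality $\Tits{\ultra{\omega}p}{\ultra{\omega}q}\leq\Tits{p}{q}$ coming from lower semi-continuity of $\Tits{\cdot}{\cdot}$ with respect to the cone topology), and is therefore continuous as a self-map of $\bdTits X$. A degenerate set is automatically Tits-closed and (vacuously) incompressible, so I may assume $A$ is non-degenerate; then $G$ has higher rank, $\bdTits X$ is bounded, $\clTits{A}$ is an honest closed metric subspace, and it is legitimate to argue with sequences.

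First I would fix arbitrary points $x,y\in\clTits{A}$ and an arbitrary $\omega\in\beta G$, and choose sequences $(a_k)$, $(b_k)$ in $A$ with $a_k\to x$ and $b_k\to y$ in the Tits metric. Incompressibility of $A$ gives the exact equality $\Tits{\ultra{\omega}a_k}{\ultra{\omega}b_k}=\Tits{a_k}{b_k}$ for every $k$. The right-hand side converges to $\Tits{x}{y}$ since the Tits metric is continuous for its own topology. For the left-hand side I would invoke Tits-continuity of $\ultra{\omega}$ to obtain $\ultra{\omega}a_k\to\ultra{\omega}x$ and $\ultra{\omega}b_k\to\ultra{\omega}y$, whence $\Tits{\ultra{\omega}a_k}{\ultra{\omega}b_k}\to\Tits{\ultra{\omega}x}{\ultra{\omega}y}$. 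Passing to the limit in the displayed equality yields $\Tits{\ultra{\omega}x}{\ultra{\omega}y}=\Tits{x}{y}$. Since $x,y$ and $\omega$ were arbitrary, no two points of $\clTits{A}$ form a compressible pair, which is the assertion.

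There is essentially no serious obstacle here; the one point deserving a word of care is that one is genuinely allowed to pass to the Tits limit inside $\ultra{\omega}$ — i.e. that $\ultra{\omega}$ is Tits-continuous and not merely $1$-Lipschitz on some special subset — but the $1$-Lipschitz bound holds globally on all of $\bdTits X$, so this is immediate. (If one prefers to avoid sequences, the very same computation runs verbatim with nets, using uniform continuity of $\ultra{\omega}$.) It is also worth observing that since one always has $\Tits{\ultra{\omega}x}{\ultra{\omega}y}\leq\Tits{x}{y}$ anyway, only the reverse inequality needs the argument above, so the proof is really just a limiting sandwich.
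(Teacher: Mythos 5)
Your proof is correct and follows essentially the same route as the paper: the paper phrases it as the compression function $\comp{\omega}{x}{y}=\Tits{x}{y}-\Tits{\ultra{\omega}x}{\ultra{\omega}y}$ being Tits-continuous and vanishing on $A\times A$, hence on $\clTits{A}\times\clTits{A}$, which is exactly your sequence/limiting-sandwich argument using the $1$-Lipschitz (hence Tits-continuous) property of $\ultra{\omega}$.
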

\begin{proof} Let $\omega\in\beta G$ and let $B=\clTits{A}$. Consider $f(x,y)=\comp{\omega}{x}{y}$ as a function of $\bdTits X\times\bdTits X$ this time: the set $A$ is incompressible, so $f(A\times A)=0$; since $f$ is Tits-continuous, we conclude $f_\omega(B\times B)=0$. This being true for any $\omega$, we conclude $B$ is incompressible.
\end{proof}
\begin{cor} An incompressible subset of $\bdTits X$ has diameter less than or equal to $\pi$  and its closure in $\bdTits X$ is Tits-compact. In particular, a Tits-closed incompressible subset of $\bdTits X$ is Tits-compact (and hence cone-compact).
\end{cor}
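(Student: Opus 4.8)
The plan is to treat the diameter bound and the Tits-compactness assertion separately, with the latter resting on Theorem~A.

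For the diameter I would argue as follows. Let $A\subseteq\bdTits X$ be incompressible. If $A$ has at most one point there is nothing to prove, so assume $A$ is non-degenerate. Then Lemma~\ref{lemma:incompressibles contained in core} applies and yields $A\subseteq\Titsball{n}{\pi}$ for \emph{every} $n\in\bdTits X$. Letting $n$ range over the points of $A$ itself, this says precisely that $\Tits{a}{b}\le\pi$ for all $a,b\in A$, i.e.\ $\diam A\le\pi$. (Here one uses that $G$ has higher rank, so the Tits metric is finite; this is our standing assumption and is in any case forced by the existence of a non-degenerate incompressible set.)

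For compactness I would first establish the statement for a Tits-closed incompressible set $B$ and then deduce the rest. Since $\bdTits X$ is a complete CAT(1) space, its Tits-closed subset $B$ is complete. Now invoke Theorem~A (Theorem~\ref{basic tool}): there exist $\omega\in\beta G$ and a round sphere $\SS=\bd F$, bounding a flat $F\subseteq X$, with $\ultra{\omega}(\bd X)\subseteq\SS$; in particular $\ultra{\omega}(B)\subseteq\SS$. Because $B$ is incompressible while every $\ultra{\nu}$ is $1$-Lipschitz for the Tits metric, we get $\Tits{\ultra{\omega}x}{\ultra{\omega}y}=\Tits{x}{y}$ for all $x,y\in B$; since the Tits metric is a genuine (finite) metric, $\ultra{\omega}\big|_B$ is therefore an injective, distance-preserving map of $B$ onto a subset of $\SS$. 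Hence $\ultra{\omega}(B)$ is isometric to $B$, so it is complete, so it is closed in $\SS$; being a closed subset of the Tits-compact round sphere $\SS$ it is compact, and therefore so is $B\cong\ultra{\omega}(B)$. For an arbitrary incompressible $A$, its Tits-closure $\clTits{A}$ is Tits-closed and, by the preceding lemma, still incompressible, hence Tits-compact by what was just shown; when $A$ is itself Tits-closed this gives $A=\clTits{A}$ Tits-compact. Cone-compactness is then automatic, since the Tits topology refines the cone topology (lower semicontinuity of $\Tits{\cdot}{\cdot}$ with respect to the cone topology), so the identity $\bdTits X\to\bdCone X$ is continuous and sends Tits-compact sets to cone-compact sets.

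The only step needing any care --- and it is a short one --- is the appeal to Theorem~A: one must note that each $\ultra{\omega}$ restricts to an isometry (in particular an injection) on an incompressible set, and that the target round sphere, being an isometrically embedded copy of a standard sphere, is genuinely Tits-compact. Granting these two observations, together with completeness of $\bdTits X$, the argument is purely formal; no further CAT(0) or CAT(1) geometry is required.
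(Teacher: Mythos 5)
Your proposal is correct and its main part is essentially the paper's own argument: pass to $\clTits{A}$, note it is incompressible by the preceding lemma, fold via Theorem~\ref{basic tool} to embed it isometrically in the round sphere $\SS$, and use completeness of $\bdTits X$ to conclude the image is closed in the compact sphere, hence $\clTits{A}$ is Tits-compact (the cone-compactness remark via lower semicontinuity is a harmless addition). Your detour through Lemma~\ref{lemma:incompressibles contained in core} for the diameter bound is valid but unnecessary, since the paper simply reads $\diam\clTits{A}\le\pi$ off the same isometric embedding into $\SS$, whose diameter is $\pi$.
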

\begin{proof} If $A$ is an incompressible subset of $\bdTits X$, set $B=\clTits{A}$. Use the folding lemma (theorem A) to find $\omega\in\beta G$ such that $\ultra{\omega}(\bd X)$ is a round sphere $\SS\subseteq\bdTits X$. Since $B$ is incompressible (last lemma), $\ultra{\omega}$ restricts to an isometric embedding of $B$ in $\SS$.  Since the Tits boundary is a complete metric space, the image of $B$ must be a closed subset of $\SS$, which implies $B$ is Tits-compact and has diameter less than or equal to $\pi$.
\end{proof}
The folding lemma has another related application:
\begin{lemma} Suppose $\{p,q\}\subset\bdTits X$ is an incompressible pair and $\gamma:[0,\Tits{p}{q}]\to X$ is a geodesic from $p$ to $q$. Then $\gamma\left([0,\Tits{p}{q}]\right)$ is an incompressible set.
\end{lemma}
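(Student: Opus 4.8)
\emph{Proof proposal.} The plan is to deduce incompressibility of the whole geodesic segment from incompressibility of its two endpoints, by propagating the equality $\Tits{\ultra{\nu}p}{\ultra{\nu}q}=\Tits{p}{q}$ along $\gamma$ with a single application of the triangle inequality, using only that each operator $\ultra{\nu}$ is $1$-Lipschitz on $\bdTits X$.

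Write $L=\Tits{p}{q}$ and $A=\gamma\big([0,L]\big)$. Note first that $L\le\pi$ by the preceding corollary on incompressible sets, so $\gamma$ is a genuine minimizing geodesic and $\Tits{\gamma(s)}{\gamma(t)}=|t-s|$ for all $s,t\in[0,L]$, with $\gamma(0)=p$, $\gamma(L)=q$. I would then fix an arbitrary $\nu\in\beta G$ and parameters $0\le s\le t\le L$, and write the chain
\begin{multline*}
	L=\Tits{p}{q}=\Tits{\ultra{\nu}p}{\ultra{\nu}q}\\
	\le\Tits{\ultra{\nu}p}{\ultra{\nu}\gamma(s)}+\Tits{\ultra{\nu}\gamma(s)}{\ultra{\nu}\gamma(t)}+\Tits{\ultra{\nu}\gamma(t)}{\ultra{\nu}q}\\
	\le s+(t-s)+(L-t)=L\,,
\end{multline*}
where the second equality is incompressibility of the pair $\{p,q\}$, the first inequality is the triangle inequality applied twice, and the last inequality is the $1$-Lipschitz property of $\ultra{\nu}$ applied to the three subsegments $\gamma|_{[0,s]}$, $\gamma|_{[s,t]}$, $\gamma|_{[t,L]}$. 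Since the two ends of the chain coincide, every intermediate inequality is an equality; in particular $\Tits{\ultra{\nu}\gamma(s)}{\ultra{\nu}\gamma(t)}=t-s=\Tits{\gamma(s)}{\gamma(t)}$. As $\nu$ and $s\le t$ were arbitrary, no pair of points of $A$ is compressible, i.e. $A$ is incompressible.

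I do not expect a genuine obstacle here: the argument is entirely pointwise, so the usual difficulty — that $\ultra{\nu}$ need not be continuous in the cone topology — never enters, and all that is used is the Tits-metric estimate $\Tits{\ultra{\nu}x}{\ultra{\nu}y}\le\Tits{x}{y}$. The only step deserving a word is the identity $\Tits{\gamma(s)}{\gamma(t)}=|t-s|$, which relies on $L\le\pi$ so that the CAT(1) geodesic $\gamma$ is globally minimizing; this is exactly what the diameter bound for incompressible sets (itself a consequence of the Folding Lemma, theorem~A) supplies. One could also phrase matters through the Folding Lemma directly — choose $\omega$ with $\ultra{\omega}(\bd X)$ a round sphere $\SS$ and observe that the same triangle-inequality computation forces $\ultra{\omega}$ to embed $A$ isometrically as a geodesic arc of $\SS$ — but since incompressibility must be verified against \emph{every} $\nu\in\beta G$, it is the direct computation above that actually carries the proof.
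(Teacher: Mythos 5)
Your proof is correct and follows essentially the same route as the paper: the paper simply cites the general fact that a $1$-Lipschitz map composed with a (minimizing) geodesic is again a geodesic precisely when it does not shrink the endpoint distance, and your triangle-inequality squeeze is exactly the standard verification of that fact, applied with $f=\ultra{\nu}$ and the incompressibility of $\{p,q\}$. The only inessential difference is your attribution of $\Tits{\gamma(s)}{\gamma(t)}=|t-s|$ to the bound $L\le\pi$; since $\gamma$ is parametrized on $[0,\Tits{p}{q}]$ it is minimizing by definition, and the diameter bound (which the paper also invokes) only ensures the setup makes sense.
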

\begin{proof} By the previous corollary, $\Tits{p}{q}\leq\pi$, and the rest follows from the fact that the composition of a $1$-Lipschitz map $f$ with a geodesic $\gamma$ is again a geodesic if and only if $f$ does not shrink the distance between the endpoints of $\gamma$. Here $f=\ultra{\omega}$ and $\gamma$ is a geodesic from $p$ to $q$. 
\end{proof}
\begin{prop}\label{max incompressible is convex} A maximal incompressible subset of $\bdTits X$ is connected and $\pi$-convex.
\end{prop}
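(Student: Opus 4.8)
My plan is to use the Folding Lemma (Theorem~\ref{basic tool}) to realize $A$ isometrically inside a round sphere, and then let that spherical geometry, together with the already-proved lemma that a geodesic between an incompressible pair is again incompressible, force $A$ to absorb all such geodesics.

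\emph{Setup.} Let $A$ be a (non-degenerate) maximal incompressible set. First I would note that $A$ is Tits-closed: $\clTits A$ is incompressible and contains $A$, so maximality gives $A=\clTits A$, and hence $A$ is Tits-compact and has $\diam A\le\pi$ by the corollary following the Tits-closure lemma. Next, apply the Folding Lemma to get $\nu\in\beta G$ with $\ultra\nu(\bd X)=\SS$ a round sphere. Because $A$ is incompressible, $\ultra\omega$ preserves every Tits distance inside $A$ for \emph{every} $\omega$; in particular $\ultra\nu$ restricts to an isometric embedding of $A$ into $\SS$. Thus $(A,\mathrm{d_{_T}})$ is, metrically, a subset of a round sphere: every triangle spanned by points of $A$ has perimeter $\le 2\pi$, geodesics of length $<\pi$ between points of $A$ are unique, and spherical trigonometry is available for configurations built out of $A$.

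\emph{$\pi$-convexity.} Fix $p,q\in A$ with $L:=\Tits p q<\pi$, let $\gamma\colon[0,L]\to\bdTits X$ be the unique geodesic, and put $\gamma^{*}=\gamma([0,L])$. By the preceding lemma $\gamma^{*}$ is incompressible, so it is enough to show $A\cup\gamma^{*}$ is incompressible — then $\gamma^{*}\subseteq A$ by maximality, and, $p,q$ being arbitrary, $A$ is $\pi$-convex. The only pairs still to be checked are $\{a,c\}$ with $a\in A$ and $c=\gamma(t)$, $0<t<L$. Given any $\omega\in\beta G$, the map $\ultra{\nu\omega}=\ultra\nu\circ\ultra\omega$ is isometric on $A$ and on $\gamma^{*}$ (both incompressible), so $\ultra{\nu\omega}a,\ultra{\nu\omega}p,\ultra{\nu\omega}q\in\SS$ realize the distances $\Tits a p,\Tits a q,L$, while $\ultra{\nu\omega}c$ sits on the minimizing $\SS$-geodesic from $\ultra{\nu\omega}p$ to $\ultra{\nu\omega}q$ at arclength $t$. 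By homogeneity of $\SS$ the number $d_{\SS}(\ultra{\nu\omega}a,\ultra{\nu\omega}c)$ depends only on $(\Tits a p,\Tits a q,L,t)$; call it $f$, and note $f$ equals the corresponding distance $\Tits{\tilde a}{\tilde c}$ in an $\SS^{2}$-comparison triangle for $\vartriangle apq$. Then
\[
f\;=\;d_{\SS}\bigl(\ultra{\nu\omega}a,\ultra{\nu\omega}c\bigr)\;\le\;\Tits{\ultra\omega a}{\ultra\omega c}\;\le\;\Tits a c\;\le\;\Tits{\tilde a}{\tilde c}\;=\;f,
\]
using that $\ultra\nu$ and $\ultra\omega$ are $1$-Lipschitz and that CAT(1) comparison applies since $\vartriangle apq$ has perimeter $\le 2\pi$. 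Hence $\Tits{\ultra\omega a}{\ultra\omega c}=\Tits a c$ and $\{a,c\}$ is incompressible. (In the degenerate sub-case $\Tits a p=\pi$ the comparison triangle collapses, but then the embedding into $\SS$ forces $\Tits a q=\pi-L$, and $\pi-t\le\Tits a c\le\Tits a q+\Tits q c=\pi-t$ pins $\Tits a c=\pi-t=f$ directly; symmetrically for $\Tits a q=\pi$.) So $A\cup\gamma^{*}$ is incompressible.

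\emph{Connectedness.} Suppose $A=U\sqcup V$ with $U,V$ nonempty and Tits-closed, hence Tits-compact. Then $\delta:=\mathrm{d_{_T}}(U,V)>0$; if $\delta<\pi$ I would take $u\in U,v\in V$ realizing $\delta$ and observe that the unique geodesic $[u,v]$ lies in $A$ by $\pi$-convexity yet is connected and meets both pieces — impossible. So $\delta=\pi$ and, since $\diam A\le\pi$, every point of $U$ is Tits-antipodal to every point of $V$. Choosing $u_{0}\in U$ and $\omega\in\beta G$ pulling away from $u_{0}$, $\pi$-convergence (Theorem~\ref{pi-convergence}, with $\repel\omega=u_{0}$, $\theta=0$) sends every point at Tits distance $\pi$ from $u_{0}$ to $\attr\omega$, so $\ultra\omega(V)=\{\attr\omega\}$; but $\ultra\omega$ is isometric on $A\supseteq V$, forcing $V$ to be a single point, and symmetrically $U$. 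Then $A$ would be a pair of antipodal points, contradicting non-degeneracy; therefore $A$ is connected.

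I expect the main obstacle to be precisely the step that turns the CAT(1) comparison \emph{inequality} into an \emph{equality}. The resolution is that the isometric embedding $\ultra\nu|_A\colon A\hookrightarrow\SS$ supplied by the Folding Lemma does double duty: it bounds the perimeter of $\vartriangle apq$ (so ordinary CAT(1) comparison is legitimate) and it identifies the comparison value $f$ with an honest spherical distance that $\ultra\nu$ — being only $1$-Lipschitz — cannot undershoot, so the chain of inequalities above is squeezed shut. The remaining care points are minor: the degenerate sub-case $\Tits a p=\pi$ (handled by the triangle inequality, as above), the boundary case of perimeter exactly $2\pi$ (a limiting argument), and the antipodal-pair corner case in the connectedness argument (excluded by non-degeneracy of $A$).
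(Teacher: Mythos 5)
Your $\pi$-convexity argument is essentially the paper's own: the same three ingredients (the lemma that a geodesic joining an incompressible pair is incompressible, the Folding Lemma to realize the relevant triple isometrically in a round sphere inside $\bdTits X$, and the squeeze of the CAT(1) comparison inequality against $1$-Lipschitzness of the operators $\ultra{\omega}$) appear in the same roles, and your bookkeeping via the spherical-trigonometric value $f$ is correct. Your separate treatment of $\Tits{a}{p}=\pi$ is fine, and the perimeter-$2\pi$ case you defer to ``a limiting argument'' needs none: in that case the comparison distance equals $\min\{\Tits{a}{p}+t,\ \Tits{a}{q}+L-t\}$, so the inequality $\Tits{a}{c}\leq\Tits{\tilde a}{\tilde c}$ is just the triangle inequality.

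The gap is the last step of your connectedness argument. You correctly reduce to $A=\{u_0,v_0\}$ with $\Tits{u_0}{v_0}=\pi$, but this does not ``contradict non-degeneracy'': non-degenerate means containing more than one point, and an incompressible antipodal pair (a dipole, in the paper's later terminology) is a perfectly good non-degenerate incompressible set, so as written your proof leaves open the possibility that a maximal incompressible set is exactly such a pair. The contradiction must be with maximality, which is how the paper closes: since $\card{\bd X}\geq 3$ there is a point $x\notin\{u_0,v_0\}$ with $\Tits{u_0}{x}+\Tits{x}{v_0}=\pi$ (this is the content of the dipole lemma in the subsection on the sphere of poles, whose proof uses only lemma \ref{lemma:incompressibles contained in core}, lemma \ref{embedding lemma} and corollary \ref{UFs map to suspensions}, not the present proposition, so there is no circularity). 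For any such $x$ and any $\omega\in\beta G$, incompressibility gives $\Tits{\ultra{\omega}u_0}{\ultra{\omega}v_0}=\pi$, while $\ultra{\omega}$ is $1$-Lipschitz on $[u_0,x]$-- and $[x,v_0]$--distances; the triangle inequality then forces both of these to be preserved, so $\{u_0,x,v_0\}$ is incompressible and strictly contains $A$, contradicting maximality. (The paper phrases this as: any geodesic joining the two antipodal points is incompressible and contains $A$ as a proper subset.)
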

\begin{proof} We prove $\pi$-convexity first. It suffices to prove that if $\{a,b,c\}$ is an incompressible triple with $\Tits{a}{b}<\pi$, then $\{c,p\}$ is incompressible for every $p\in[a,b]$ (recall that $\Tits{a}{b}<\pi$ implies there is only one geodesic arc joining $a$ with $b$).

Let $p\in[a,b]$ and $\omega\in\beta G$ be fixed arbitrarily and denote $a'=\ultra{\omega}a$, $b'=\ultra{\omega}b$, $c'=\ultra{\omega}c$ and $p'=\ultra{\omega}p$. By the preceding lemma, $\Tits{p'}{a'}=\Tits{p}{a}$, $\Tits{p'}{b'}=\Tits{p}{b}$. We also have $\Tits{p'}{c'}\leq\Tits{p}{c}$ since $\ultra{\omega}$ is 1-Lipschitz.

Next, the triple $\{a',b',c'\}$ is incompressible, so use the folding lemma to find $\nu\in\beta G$ and an isometrically embedded round $2$-sphere $\SS\subset\bdTits X$ of diameter $\pi$ such that $\ultra{\nu}\{a',b',c'\}\subset\SS$. Since the map $f=\ultra{\nu}\circ\ultra{\omega}$ restricts to an isometry on $\{a,b,c\}$ and since $\bdTits X$ is $\pi$-uniquely geodesic, $\SS$ contains $f([a,b])$ as well. Consequently, the spherical triangle $\bar\Delta$ with vertices $f(a),f(b),f(c)$ is a comparison triangle for $\vartriangle(a,b,c)$ and the point $f(p)$ is a comparison point for the point $p$. By the CAT(1) inequality, $\Tits{p}{c}\leq\Tits{f(p)}{f(c)}$. 
Thus by 1-Lipschitz
$$\Tits{p}{c} \le \Tits{f(p)}{f(c)}=\Tits{\ultra{\nu} p'}{\ultra{\nu} c'} \le \Tits{p'}{c'} \le  \Tits{p}{c} $$
Since $ \Tits{p'}{c'} =  \Tits{p}{c} $, the set $\{c,p\}$ is incompressible.

Let $A$ be a maximal incompressible set in $\bdTits X$. If $A$ is disconnected, then $A$ is the disjoint union of a pair of non-empty Tits-closed subsets $P$ and $Q$ (this is by Tits-compactness). By $\pi$-convexity of $A$, $\Tits{P}{Q}\geq\pi$. Since the diameter of $A$ is at most $\pi$, we have $\Tits{p}{q}=\pi$ for all $p\in P$ and $q\in Q$. Since $A$ is isometric to a subset of a standard unit sphere we must conclude both $P$ and $Q$ are degenerate. But then any geodesic joining them is an incompressible set containing $A$ as a proper subset -- a contradiction to maximality.
\end{proof}
This essentially finishes the proof of theorem C from the introduction:
\begin{cor}[`Theorem C'] Let $d$ be the geometric dimension of $\bdTits X$. Then every maximal incompressible subset $A\subseteq\bdTits X$ is isometric to a compact convex subset of $\SS^d$.
\end{cor}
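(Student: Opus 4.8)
The plan is to read this corollary off from the structural facts about maximal incompressible sets established above, combined with the Folding Lemma, in exactly the way the proof that incompressible sets are Tits-compact already proceeds.

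First I would record what is known about a maximal incompressible $A\subseteq\bdTits X$: by the corollary above stating that incompressible subsets of $\bdTits X$ are Tits-compact of diameter at most $\pi$, $A$ is Tits-compact with $\diam A\leq\pi$, and by Proposition~\ref{max incompressible is convex}, $A$ is connected and $\pi$-convex in $\bdTits X$. The role of the folding lemma is that $A$ can moreover be realized isometrically \emph{inside a round $d$-sphere}, which an abstract compact $\pi$-convex space need not admit.

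Accordingly, I would apply the Folding Lemma (Theorem~\ref{basic tool}, i.e.\ Theorem~A) to an arbitrary $(d+1)$-flat, obtaining $\omega\in\beta G$ and a $(d+1)$-flat $F\subseteq X$ with $\ultra{\omega}(\bd X)=\SS:=\bd F$; here $\SS$ is a round $d$-sphere, so it is isometric to $\SS^d$ and the dimension matches the statement. Since $A$ is incompressible, the $1$-Lipschitz operator $\ultra{\omega}$ preserves every pairwise Tits-distance in $A$, hence is injective on $A$, and so restricts to a bijective isometry $A\to A':=\ultra{\omega}(A)\subseteq\SS$. The set $A'$ is the image of the Tits-compact set $A$ under the Tits-continuous map $\ultra{\omega}$, so $A'$ is Tits-compact, hence closed in $\SS^d$. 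Moreover $\ultra{\omega}\big|_A$, being an isometry between subsets of the two CAT(1) spaces $\bdTits X$ and $\SS^d$ — both uniquely geodesic on pairs at distance $<\pi$ — carries the unique geodesic between any two points of $A$ at distance $<\pi$ onto the corresponding unique geodesic of $\SS^d$, so the $\pi$-convexity of $A$ passes to $A'$. Finally I would invoke the fact that a closed $\pi$-convex subset of a round sphere is an intersection of closed hemispheres, i.e.\ a (possibly degenerate, possibly infinite-sided) compact convex spherical polytope; then $A$ is isometric to the compact convex set $A'$, which in fact gives the sharper form of Theorem~C as well.

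The geometric substance — the Folding Lemma together with the connectedness and $\pi$-convexity of maximal incompressibles — is already in hand, so most of the above is bookkeeping. I expect the only genuinely non-formal point to be the last one: upgrading ``closed and $\pi$-convex in $\SS^d$'' to ``intersection of closed hemispheres of $\SS^d$''. For the polytopal refinement of Theorem~C this is where the finer structure theory of $\pi$-convex subsets of CAT(1) spheres (Lytchak's join/building decomposition and its consequences) must be brought in; for the weaker statement as literally phrased — ``isometric to a compact convex subset of $\SS^d$'' — it reduces to the classical fact that a closed convex subset of a round sphere is a spherical polytope in this generalized sense.
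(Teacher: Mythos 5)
Your argument is correct and follows the paper's own proof: apply the Folding Lemma to embed the maximal incompressible set $A$ isometrically into the round sphere $\SS=\bd F$ (incompressibility makes $\ultra{\omega}$ an isometry on $A$), note that the image is closed, connected and $\pi$-convex, and conclude via the classical fact that such a subset of a round sphere is an intersection of closed hemispheres, hence a compact convex spherical polytope. The extra bookkeeping you include (injectivity, transfer of $\pi$-convexity via unique geodesics) is fine and only makes explicit what the paper leaves implicit.
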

\begin{proof} It suffices to consider $A\in\II$. Use the folding lemma to isometrically embed $A$ in a round sphere $\SS\subset\bdTits X$. A closed, connected, $\pi$-convex subset of a round sphere is the intersection of a family of closed hemispheres, and we are done.
\end{proof}
A more precise description of the elements of $\II$ is available in \ref{join decomposition of maximal incompressibles}, but we will need to work a bit more for it.

%STOPPED HERE
\subsection{The Sphere of Poles}
\begin{defn} A point $a\in\bdTits X$ is said to be a {\it pole}, if it has an antipode $b$ such that $\{a,b\}$ is incompressible, and we then say that $\{a,b\}$ is a {\it dipole}. We denote the set of all poles by $\fat{P}$.
\end{defn}
By incompressibility, it is clear that the action of $\beta G$ on $\bdTits X$ preserves dipoles and that $\Tits{a}{b}=\pi$ for any dipole $\{a,b\}$.

\begin{lemma} If $\{a,b\}$ is a dipole, then $\suspend{a}{b}=\bdTits X$.
\end{lemma}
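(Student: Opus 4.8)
The plan is to prove the inclusion $\bdTits X\subseteq\suspend{a}{b}$ pointwise, since the reverse inclusion is automatic. Fix $x\in\bdTits X$; I must show $\Tits{a}{x}+\Tits{x}{b}=\pi$. One half is free: the dipole $\{a,b\}$ is a non-degenerate incompressible set, so Lemma \ref{lemma:incompressibles contained in core} puts $a,b\in\core$, whence $\Tits{a}{x}\le\pi$ and $\Tits{x}{b}\le\pi$, and the triangle inequality in the CAT(1) space $\bdTits X$ gives $\Tits{a}{x}+\Tits{x}{b}\ge\Tits{a}{b}=\pi$. So the real content is the reverse estimate $\Tits{a}{x}+\Tits{x}{b}\le\pi$.

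The key observation — and essentially the only ``trick'' — is that the collapsing ultrafilter should be aimed at $x$, not at a pole: pulling from $a$ or from $b$ merely reproduces the lower bound we already have. So I would pick $\mu\in\beta G$ pulling away from $x$; such $\mu$ exists (and is non-principal) because $G$ acts geometrically, and then $\repel{\mu}=x$. Write $p=\attr{\mu}$. Applying $\pi$-convergence (Theorem \ref{pi-convergence}) with $\theta=\pi-\Tits{a}{x}$ to the point $a$, and with $\theta=\pi-\Tits{x}{b}$ to the point $b$, gives
\[\Tits{\ultra{\mu}a}{p}\le\pi-\Tits{a}{x}\qquad\text{and}\qquad\Tits{\ultra{\mu}b}{p}\le\pi-\Tits{x}{b};\]
note both choices of $\theta$ lie in $[0,\pi]$ precisely because $a,b\in\core$.

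Finally, incompressibility of the dipole says $\Tits{\ultra{\mu}a}{\ultra{\mu}b}=\Tits{a}{b}=\pi$ for this very $\mu$, so running the triangle inequality through $p$,
\[\pi=\Tits{\ultra{\mu}a}{\ultra{\mu}b}\le\Tits{\ultra{\mu}a}{p}+\Tits{p}{\ultra{\mu}b}\le\bigl(\pi-\Tits{a}{x}\bigr)+\bigl(\pi-\Tits{x}{b}\bigr),\]
which rearranges to $\Tits{a}{x}+\Tits{x}{b}\le\pi$. With the earlier lower bound this yields $x\in\suspend{a}{b}$, and as $x$ was arbitrary, $\suspend{a}{b}=\bdTits X$. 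I do not foresee a genuine obstacle: the one place that needs care is checking the parameters $\theta$ fed to $\pi$-convergence are admissible, which is exactly what $a,b\in\core$ provides (and could, if one preferred, be re-derived on the spot by running the $\theta=0$ case against incompressibility, since $\Tits{a}{x}>\pi$ would already force $\Tits{\ultra{\mu}a}{\ultra{\mu}b}<\pi$).
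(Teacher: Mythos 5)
Your proof is correct, and its opening move is the same as the paper's: aim an ultrafilter pulling away from the test point (your $x$, the paper's $n$), use Lemma \ref{lemma:incompressibles contained in core} to keep $a,b$ within Tits-distance $\pi$ of it, and let incompressibility of the dipole forbid the resulting contraction. Where you genuinely diverge is the finishing step: the paper argues by contradiction through Lemma \ref{embedding lemma} and Corollary \ref{UFs map to suspensions}, carrying the segments $[n,a]$ and $[n,b]$ isometrically into the suspension $\suspend{\ultra{\omega}n}{p}$ and invoking the geometry of that suspension to force $\Tits{\ultra{\omega}a}{\ultra{\omega}b}<\pi$ once $\Tits{n}{a}+\Tits{n}{b}>\pi$, whereas you quote Theorem \ref{pi-convergence} twice, with $\theta=\pi-\Tits{a}{x}$ and $\theta=\pi-\Tits{x}{b}$, and run the triangle inequality through $p=\attr{\mu}$, which together with $\Tits{\ultra{\mu}a}{\ultra{\mu}b}=\Tits{a}{b}=\pi$ gives the upper bound $\Tits{a}{x}+\Tits{x}{b}\le\pi$ directly. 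Your route is leaner: it needs neither the embedding lemma nor any facts about distances inside spherical suspensions, it proves the equality $x\in\suspend{a}{b}$ without contradiction, and the admissibility of both values of $\theta$ is exactly the point where $a,b\in\core$ enters, as you note. What the paper's version buys is consistency with machinery (suspension images of ultralimits) that it has already built and reuses elsewhere, e.g.\ in the folding lemma; as a self-contained proof of this particular statement, your argument is complete and arguably cleaner.
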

\begin{proof} Suppose $n\in\bd X$ does not lie on a Tits geodesic joining $a$ to $b$. Find $\omega\in\UF$ pulling away from $n$. Write $\connect{\omega}{n}{p}$.

Since $\{a,b\}\subset\core$ by lemma \ref{lemma:incompressibles contained in core}, $\ultra{\omega}$ maps the geodesics $[n,a]$ and $[n,b]$ isometrically to longitudes of the suspension $\suspend{\ultra{\omega}n}{p}$. Since $\Tits{n}{a}+\Tits{n}{b}>\pi$, we conclude that $\Tits{\ultra{\omega}n}{\ultra{\omega}a}+\Tits{\ultra{\omega}n}{\ultra{\omega}b}<\pi$ -- a contradiction to $\{a,b\}$ being a dipole. 
\end{proof}
Using Swenson's theorem \ref{join decomposition}, we obtain:
\begin{thm}\label{join decomposition of maximal incompressibles} The set $\fat{P}$ of all poles coincides with the sphere $\SS(\bdTits X)$ of all suspension points of $\bdTits X$. Moreover, every maximal incompressible subset $A$ of $\bdTits X$ contains $\fat{P}$ and decomposes as a join $\fat{P}\ast E'(A)$ where $E'(A)$ is a connected, $\pi$-convex, maximal incompressible subset of $E'(\bdTits X)$, with radius strictly less than $\pi/2$.
\end{thm}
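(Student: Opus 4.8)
The plan is to prove both inclusions $\fat P\subseteq\SS(\bdTits X)$ and $\SS(\bdTits X)\subseteq\fat P$, to observe that $\fat P$ is incompressible and hence lies inside every member of $\II$, and then to obtain the join decomposition of $A\in\II$ by applying Theorem~\ref{join decomposition} to $A$ itself. The inclusion $\fat P\subseteq\SS(\bdTits X)$ is immediate from the preceding lemma: a dipole $\{a,b\}$ has $\bdTits X=\suspend a b$, so $a$ and $b$ are suspension points. Everything else rests on a \emph{propagation through a suspension} observation: if $\{a,b\}$ is a dipole and $z\in\bdTits X$ (so $z$ lies on a geodesic from $a$ to $b$), then for every $\omega\in\beta G$
\[
\pi=\Tits{\ultra\omega a}{\ultra\omega b}\le\Tits{\ultra\omega a}{\ultra\omega z}+\Tits{\ultra\omega z}{\ultra\omega b}\le\Tits a z+\Tits z b=\pi ,
\]
forcing equality throughout; hence $\{a,z\}$ and $\{b,z\}$ are incompressible. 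Taking $z$ to be a second pole shows that any two poles form an incompressible pair, so $\fat P$ is incompressible, and the same computation shows $\fat P\cup A$ is incompressible whenever $A$ is. Therefore $\fat P$ sits inside some maximal incompressible set, and by maximality $\fat P\subseteq A$ for every $A\in\II$.

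Granting the reverse inclusion, the structure of a general $A\in\II$ falls out. By Proposition~\ref{max incompressible is convex} $A$ is connected and $\pi$-convex, so Theorem~\ref{join decomposition} gives $A=\SS(A)\ast E'(A)$ with $\SS(A)$ the round sphere of suspension points of $A$. A suspension point of $A$ has an antipode in $A$, so the corresponding pair is incompressible and at distance $\pi$, i.e.\ a dipole, hence lies in $\fat P$; conversely a pole $c\in\fat P\subseteq A$ has, by the propagation observation, its dipole partner $d$ in $A$ and satisfies $A\subseteq\bdTits X=\suspend c d$, so $c$ is a suspension point of $A$. Thus $\SS(A)=\fat P$ and $A=\fat P\ast E'(A)$. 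The factor $E'(A)$ is incompressible (a subset of $A$), $\pi$-convex (a join factor of a $\pi$-convex space), and connected and disjoint from $\fat P$ by the argument of Proposition~\ref{max incompressible is convex} applied inside $E'(\bdTits X)$ together with $\fat P\cap E'(\bdTits X)=\varnothing$; it lies in $E'(\bdTits X)$ because $\SS(\bdTits X)=\fat P=\SS(A)$ means the two join decompositions are cut out by the same condition, namely ``distance $\pi/2$ from all of $\fat P$''. Maximality of $E'(A)$ in $E'(\bdTits X)$ follows from maximality of $A$: an incompressible $B\supsetneq E'(A)$ inside $E'(\bdTits X)$ would make $\fat P\ast B\supsetneq A$ incompressible (pairs with a point in $\fat P$ by propagation, join-interior pairs by the geodesic-convexity argument of Proposition~\ref{max incompressible is convex}), a contradiction. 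Finally $E'(A)$ has no antipodal pair — such a pair would be a dipole and so lie in $\fat P$, contradicting the disjointness — and the Folding Lemma (Theorem~\ref{basic tool}) embeds $E'(A)$ isometrically in a round $d$-sphere, so a standard spherical-geometry fact gives $\rad{E'(A)}<\pi/2$.

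The main obstacle is the reverse inclusion $\SS(\bdTits X)\subseteq\fat P$, i.e.\ that every suspension point is a pole. By the propagation observation and the roundness of $\SS(\bdTits X)$ it is enough to show that each antipodal pair $\{a,b\}$ in $\SS(\bdTits X)$ is incompressible; since $\Tits{\ultra\omega a}{\ultra\omega b}\le\Tits a b=\pi$ always and $\bdTits X=\suspend a b$, this amounts to ruling out $\Tits{\ultra\omega a}{\ultra\omega b}<\pi$. I would argue by contradiction, choosing $\omega$ that maximally compresses $\{a,b\}$ — available since $\comp\omega a b$ is upper semicontinuous and $\beta G$ is compact — so that $\{\ultra\omega a,\ultra\omega b\}$ is itself an incompressible pair at some distance $\theta\le\pi$. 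If $\theta<\pi$ then, $\ultra\omega$ being $1$-Lipschitz and $\bdTits X=\suspend a b$, the image $\ultra\omega(\bdTits X)$ is confined to the proper $\pi$-convex \emph{lens} $\{w:\Tits{\ultra\omega a}{w}+\Tits w{\ultra\omega b}\le\pi\}$, and this lens constraint is inherited by any further $1$-Lipschitz image; composing with the Folding Lemma, which carries $\bd X$ onto a round $d$-sphere (necessarily containing a pair of antipodes), should then be incompatible with $\theta<\pi$. Making that last step precise — quantifying how much of a full-dimensional round sphere can survive inside the image of a maximally compressed suspension — is where I expect the real difficulty to be.
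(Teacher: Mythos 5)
Your argument has one genuine gap, and it sits exactly at the heart of the theorem: the inclusion $\SS(\bdTits X)\subseteq\fat{P}$, i.e.\ that every antipodal pair $\{a,b\}$ of suspension points satisfies $\Tits{\ultra{\omega}a}{\ultra{\omega}b}=\pi$ for \emph{every} $\omega\in\beta G$. You acknowledge this yourself, and the strategy you sketch (maximally compress $\{a,b\}$, trap $\ultra{\omega}(\bd X)$ in the lens $\{w:\Tits{\ultra{\omega}a}{w}+\Tits{w}{\ultra{\omega}b}\le\pi\}$, then contradict the Folding Lemma) does not close it as stated: Theorem~\ref{basic tool} guarantees that $\ultra{\omega'}(\bd X)$ is a full round $d$-sphere, but after composing you only control $\ultra{\omega'\cdot\omega}(\bd X)\subseteq\ultra{\omega'}(\bd X)$, which need not be a round sphere at all, so nothing full-dimensional is forced to live inside the lens; and even if it were, ruling out a full-dimensional round sphere inside such a lens in a general CAT(1) boundary is itself a nontrivial claim requiring proof. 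So the contradiction you hope for is not available without substantial extra work.

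The paper closes this step by a much softer invariance argument, and you should use it: by the \emph{uniqueness} in Theorem~\ref{join decomposition}, the factors $\SS(\bdTits X)$ and $E'(\bdTits X)$ are canonically defined, hence $G$-invariant. Since $\SS(\bdTits X)$ is a round sphere, the cone and Tits topologies coincide on it; it is therefore cone-compact, cone-closed and $\beta G$-invariant, and on it the Tits metric is cone-continuous. Consequently, for $x,y\in\SS(\bdTits X)$ one gets $\Tits{\ultra{\omega}x}{\ultra{\omega}y}=\ulim{\omega}{g}\Tits{gx}{gy}=\Tits{x}{y}$ (each $\ultra{\omega}$ is a pointwise limit of isometries of this compact sphere into itself), so $\SS(\bdTits X)$ is incompressible and every point of it is a pole. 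With that in hand, the remainder of your write-up is essentially correct and in fact runs parallel to the paper's: your ``propagation through a suspension'' inequality is a clean substitute for the paper's observation that join segments map isometrically to join segments (giving $\fat{P}\cup A$ incompressible and $\fat{P}\subseteq A$), your identification of the suspension points of $A$ with $\fat{P}$ matches the paper's treatment of $B=\ultra{\omega}A$ inside the folded sphere, and your argument that $E'(A)$ has no antipodal pair, embeds in a round $d$-sphere via Theorem~\ref{basic tool}, and hence has radius less than $\pi/2$ is the same as the paper's. Only the invariance mechanism above is missing, and it cannot be replaced by the compression-maximization sketch as written.
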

\begin{proof} Write $\bdTits X=\SS(\bdTits X)\ast E'(\bdTits X)$. By the uniqueness of Swenson's decomposition, both $\SS(\bdTits X)$ and $M=E'(\bdTits X)$ are $G$-invariant and hence $\beta G$-invariant. Moreover, the cone topology on $\SS(\bdTits X)$ coincides with the Tits topology, so $\beta G$ acts on $\SS(\bdTits X)$ by isometries. In particular, $\SS(\bdTits X)$ is incompressible, and therefore consists entirely of poles. We conclude that $\SS(\bdTits X)$ coincides with the set $\fat{P}$ of poles.

Let now $A$ be a maximal incompressible subset of $\bdTits X$. Since both $\fat{P}$ and $M$ are $\beta G$-invariant, join segments of the decomposition are mapped isometrically to join segments, implying that $A\cup\fat{P}$ is incompressible. By the maximality of $A$, this means $\fat{P}\subseteq A$.

Let $d$ be the geometric dimension of $\bdTits X$ and use the folding lemma to produce $\omega\in\beta G$ and a round sphere $\SS\subset\bdTits X$ of dimension $d$ such that $\ultra{\omega}\bd X=\SS$. Since $\fat{P}$ is incompressible and $\beta G$-stable we must have $\fat{P}\subseteq\SS$. Decompose $\SS$ as the spherical join of $\fat{P}$ with a round sub-sphere $\SS'\subseteq\SS$.

We also have $\fat{P}=\ultra{\omega}\fat{P}\subseteq\ultra{\omega}A$. Consider the join decomposition of $B=\ultra{\omega}A$: on one hand we have $\fat{P}\subseteq\SS(B)$, while on the other the incompressibility of $B$ implies that every pair of suspension points of $B$ is a dipole. Thus $\SS(B)=\fat{P}$, implying $\SS(A)=\fat{P}$. Turning to $E'(B)\subset\SS'$, we observe $\diam E'(B)<\pi$. For a closed, connected $\pi$-convex subset of a round sphere this implies it is contained in an open hemisphere and we must conclude that $E'(B)$ -- and hence also $E'(A)$ -- has radius less than $\pi/2$. 
\end{proof}
From this we obtain the following easy corollary:
\begin{cor}[Closing Lemma for the Poor] Suppose $G$ is a group acting geometrically on a CAT(0) space $X$. If $\bdTits X$ contains a pole, then $\diam\bdTits X=\pi$.
\end{cor}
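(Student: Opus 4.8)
The plan is to read the statement off the structure theory for the sphere of poles developed just above. Suppose $\bdTits X$ contains a pole, i.e.\ $\fat P\neq\varnothing$. By definition this means there is a point $a\in\bdTits X$ with an antipode $b$ for which $\{a,b\}$ is incompressible; since a non-degenerate incompressible set lies in $\core$ by lemma~\ref{lemma:incompressibles contained in core}, we have $\Tits{a}{b}\le\pi$, and combining this with $\Tits{a}{b}\ge\pi$ (the meaning of ``antipode'') gives $\Tits{a}{b}=\pi$, so $\{a,b\}$ is genuinely a dipole. The dipole lemma above then yields $\bdTits X=\suspend{a}{b}$, and part~(2) of lemma~\ref{lemma:suspension} identifies $\bdTits X$ with the metric spherical suspension $\{a,b\}\ast\equator{a}{b}$. (Equivalently, one may invoke theorem~\ref{join decomposition of maximal incompressibles}, which gives $\fat P=\SS(\bdTits X)$: since $\fat P\neq\varnothing$ the suspension sphere $\SS(\bdTits X)$ is a non-empty round sphere, and theorem~\ref{join decomposition} writes $\bdTits X=\SS(\bdTits X)\ast E'(\bdTits X)$.)

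From here the conclusion is immediate from two elementary facts about metric spherical joins. First, any metric spherical join has diameter at most $\pi$: from the join distance formula $\cos d=\cos\theta_1\cos\theta_2\cos\min(d_1,\pi)+\sin\theta_1\sin\theta_2\cos\min(d_2,\pi)$ one gets $\cos d\ge-\cos(\theta_1-\theta_2)\ge-1$, hence $d\le\pi$; so $\diam\bdTits X\le\pi$. Second, $\bdTits X$ already contains the dipole $\{a,b\}$ with $\Tits{a}{b}=\pi$ (any non-empty round sphere, in particular $\SS^0$, contains a pair of points at distance $\pi$), so $\diam\bdTits X\ge\pi$. Therefore $\diam\bdTits X=\pi$.

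I do not anticipate a genuine obstacle here: all the substantive work --- the folding lemma, the fact that a dipole spans $\bdTits X$, and the identification $\fat P=\SS(\bdTits X)$ --- is already in place, so what remains is just computing the diameter of a spherical join. The only point deserving a moment's care is the upgrade from ``antipode'' (distance $\ge\pi$) to Tits distance exactly $\pi$, which is forced by the containment of incompressible sets in the core (lemma~\ref{lemma:incompressibles contained in core}).
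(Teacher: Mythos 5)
Your proposal is correct and follows essentially the paper's intended route: the paper presents this as an immediate consequence of the dipole lemma (a dipole $\{a,b\}$ has $\Tits{a}{b}=\pi$ and satisfies $\suspend{a}{b}=\bdTits X$), equivalently of the identification $\fat{P}=\SS(\bdTits X)$ in theorem \ref{join decomposition of maximal incompressibles}, after which the diameter-$\pi$ computation for a nontrivial spherical suspension is exactly as you describe. Your care about upgrading ``antipode'' ($\ge\pi$) to distance exactly $\pi$ via incompressibility matches the remark the paper makes right after defining dipoles.
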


\subsection{The Main Theorem} We are practically ready to prove theorem B.

\begin{lemma} Suppose a group $G$ acts geometrically on a CAT(0) space $X$. If $G$ stabilizes some $A\in\II$, Then $E'(A)$ is empty.
\end{lemma}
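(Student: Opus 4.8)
The plan: assume $G$ stabilizes $A\in\II$ and, for a contradiction, that $E'(A)\neq\varnothing$; I will produce a $G$-fixed point that is forced to lie simultaneously in $E'(\bdTits X)$ and in the set of suspension points of $\bdTits X$ — an impossibility.

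First I would upgrade the action on $A$ to a $\beta G$-action by isometries. Since $A$ is incompressible and Tits-closed it is Tits-compact, hence cone-compact and cone-closed; as $G\cdot A=A$, the cone-closure of every orbit $G\cdot a$ ($a\in A$) lies in $A$, so $\ultra{\omega}(A)\subseteq A$ for all $\omega\in\beta G$. Incompressibility makes each $\ultra{\omega}|_A$ an isometric self-embedding of the compact metric space $A$, hence a surjective isometry of $A$. By Theorem \ref{join decomposition of maximal incompressibles}, $A=\fat{P}\ast E'(A)$ with $\fat{P}=\SS(A)$ the intrinsic sphere of suspension points of $A$; being an isometry invariant of $A$, $\fat{P}$ is $\beta G$-stable, and hence so is $E'(A)$ (say, as the set of points of $A$ at Tits-distance $\pi/2$ from all of $\fat{P}$).

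Next I would locate the fixed point. By Theorem \ref{join decomposition of maximal incompressibles}, $E'(A)$ is a non-empty, Tits-closed, $\pi$-convex subset of $\bdTits X$ of radius $<\pi/2$; in the complete CAT(1) space $\bdTits X$ such a set has a unique circumcenter $c$, and $c\in E'(A)$ by $\pi$-convexity. Since $\beta G$ preserves $E'(A)$ and acts on it by isometries, it fixes $c$; in particular $\{c\}\in\minG$, and $c\in E'(A)\subseteq E'(\bdTits X)$, so by uniqueness of Swenson's decomposition (Theorem \ref{join decomposition}) $c$ is \emph{not} a suspension point of $\bdTits X$. I claim this is impossible: because $G$ has higher rank, $c$ is non-extreme, and a non-extreme boundary point fixed by all of $G$ must be a suspension point. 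To see the claim, apply Ruane's theorem to get a finite-index subgroup $G_0=H\times\ZZ\leq G$; the Flat Torus Theorem forces the central $\ZZ$ to split a line off $X$, so after passing to a coarsely dense $G_0$-invariant convex subspace $X\simeq Y\times\RR$ one has $\bdTits X=\bdTits Y\ast\SS^0$ with the two $\SS^0$-poles being suspension points. The $\RR$-translation acts trivially on $\bdTits X$, so $H$ fixes $c$; and since $E'(\bdTits X)=E'(\bdTits Y)$ (uniqueness of the decomposition) and $c\in E'(\bdTits X)$, we get $c\in\bdTits Y$ with $H$ fixing it and $c$ still non-extreme. Iterating peels off Euclidean factors while strictly dropping the geometric dimension of the residual Tits boundary, so after finitely many steps the residual boundary is $0$-dimensional; then the residual group is virtually cyclic or finite and $\bdTits X$ is a round sphere, i.e.\ $\fat{P}=\bdTits X$ and $E'(\bdTits X)=\varnothing$ — contradicting $c\in E'(\bdTits X)$. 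Hence $c$ is a suspension point, contradicting the previous sentence, and therefore $E'(A)=\varnothing$.

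The main obstacle is precisely the claim that a $G$-fixed non-extreme point is a suspension point. Ruane's theorem yields only a \emph{virtual} product splitting a priori, so one must invoke the Flat Torus Theorem to see that the central $\ZZ$ genuinely splits a line off $X$ (up to coarse equivalence) rather than acting, say, parabolically, and one must carefully track the fixed point $c$ through the descent — verifying at each stage that $c$ lands in the non-spherical join factor and that the geometric dimension of the residual Tits boundary drops — so that the process terminates. Everything else (cone-compactness of incompressible sets, surjectivity of isometric self-embeddings of compacta, existence and uniqueness of circumcenters of small sets in CAT(1) spaces, and the join decompositions) is either standard or already established above.
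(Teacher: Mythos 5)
Your argument is correct and rests on the same two pillars as the paper's proof: the $G$-fixed circumcenter $c$ of the closed, $\pi$-convex, radius-$<\pi/2$ set $E'(A)$, and Ruane's splitting theorem applied to a geometric action with a global fixed point at infinity. Where you differ is in how the induction is run. The paper takes a counterexample minimizing $\dim\fat{P}$, applies Ruane once, and must then verify that $A\cap\bd Z$ is a \emph{maximal $H$-incompressible} set in order to invoke minimality; the contradiction comes from $A\cap\bd Z$ being forced into $\fat{P}$ while it contains $E'(A)$. You instead use $A$ only once -- to manufacture, via Theorem \ref{join decomposition of maximal incompressibles}, a $G$-fixed point $c\in E'(A)\subseteq E'(\bdTits X)$ -- and then descend entirely at the level of the ambient boundary: each application of Ruane writes $\bd Y_k=\SS^0\ast\bd Y_{k+1}$, uniqueness in Theorem \ref{join decomposition} keeps $E'(\bdTits Y_{k+1})=E'(\bdTits Y_k)\ni c$ while the geometric dimension drops by one, and finite-dimensionality of $\bdTits X$ kills the descent. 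This buys you something real: you never have to compare $G$-incompressibility with $H$-incompressibility or argue maximality of $A\cap\bd Z$, which is the most delicate (and tersest) step in the paper's proof.

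Two presentational points to fix. First, your headline claim ``a non-extreme boundary point fixed by all of $G$ must be a suspension point'' is both stronger than what your descent proves and stronger than what you need: the descent shows only that no $G$-fixed point lies in $E'(\bdTits X)$ (a fixed point could a priori sit in the interior of a join segment, neither in $\SS(\bdTits X)$ nor in $E'(\bdTits X)$), and that weaker statement already contradicts $c\in E'(\bdTits X)$. Relatedly, the assertion that ``$c$ is still non-extreme'' after passing to $\bd Y$ is not justified (the join metric truncates Tits distances within $\bd Y$ at $\pi$), but it is never used, so simply delete the extremality language. Second, the termination step should be phrased as: the descent never halts as long as $c$ lies in the residual boundary, since Ruane needs only a geometric action with a fixed point at infinity; hence the strict dimension drop alone yields the contradiction, without the detour through ``the residual group is virtually cyclic or finite.''
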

\begin{remark} We only require $G$ to stabilize $A$ as a set, that is: $G\cdot A=A$. 
\end{remark}
\begin{proof} Suppose this lemma is false. Then there exists a geometric action of a group $G$ on a CAT(0) space $X$ such that $G$ stabilizes a maximal incompressible set $A$ with $E'(A)$ non-empty and $\dim\fat{P}$ is smallest possible.

Since $E'(A)$ is closed a connected and $\pi$-convex $G$-invariant set of radius strictly less than $\pi/2$, $E'(A)$ has a center $z\in E'(A)$, and $G$ fixes $z$.

By a result of Ruane (see \cite{[Ruane-dynamics]}), $G$ virtually splits as a product $H\times\langle g\rangle$, with $g$ hyperbolic and $\bd X=\{g^{\pm\infty}\}\ast \bd Z$ where $Z$ is a closed convex $H$-co-compact subspace of $X$. Observe that $\{g^{\pm\infty}\}\subseteq\fat{P}$, implying that $\bd Z$ decomposes as $E'(\bdTits X)\ast\fat{P}'$ where $\fat{P}'$ is the equator of the pair $\{g^{\pm\infty}\}$ in $\fat{P}$, and this decomposition is $H$-invariant, as $H$ fixes both of the points $g^{\pm\infty}$. 

It follows that $\dim\SS(\bd Z)<\dim\fat{P}$, and $H$ acts geometrically on $Z$ while stabilizing the maximal ($H$-)incompressible set $A\cap\bd Z$. The minimality property of the triple $(G,X,A)$ then implies $A\cap\bd Z=\SS(\bd Z)\subset\fat{P}$, which is absurd: $A\cap\bd Z$ contains $E'(A)$ which is disjoint from $\fat{P}$.
\end{proof}
We are now ready to prove theorem B. Here is its more technical formulation:
\begin{thm} Suppose $G$ is a group acting geometrically on a CAT(0) space $X$ and let $d$ denote the geometric dimension of $\bdTits X$. If $G$ stabilizes an element $A\in\II$, then $G$ contains a free-Abelian subgroup $H$ of finite index and $X$ contains an $H$-periodic coarsely-dense $(d+1)$-flat $F\subseteq X$. In particular: $A=\bd X$ and $\bdTits X$ is a round sphere.
\end{thm}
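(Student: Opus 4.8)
The plan is to collapse the Tits boundary onto a round $d$-sphere and then recognise $X$ as coarsely Euclidean. First I would feed the hypothesis into the preceding lemma: since $G$ stabilises $A\in\II$, we get $E'(A)=\varnothing$, so Theorem \ref{join decomposition of maximal incompressibles} forces $A=\fat{P}=\SS(\bdTits X)$, which by Theorem \ref{join decomposition} is a round sphere of some dimension $k\geq 0$ (it is non-degenerate), with $\bdTits X=A\ast E'(\bdTits X)$. In particular $A$ contains a dipole, so the Closing Lemma for the Poor already gives $\diam\bdTits X=\pi$; more importantly, as noted in the proof of Theorem \ref{join decomposition of maximal incompressibles}, $\fat{P}$ is $\beta G$-invariant, and incompressibility makes each $\ultra{\omega}$ restrict to an isometry of the compact round sphere $A$, hence to a surjection of $A$ onto itself carrying antipodal pairs to antipodal pairs.

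The first real step is to prove $E'(\bdTits X)=\varnothing$. Suppose $w\in E'(\bdTits X)$; then $w\notin A$, and since $A$ is a \emph{maximal} incompressible set, $A\cup\{w\}$ is compressible, so there are $a\in A$ and $\nu\in\beta G$ with $\Tits{\ultra{\nu}a}{\ultra{\nu}w}<\Tits{a}{w}=\pi/2$. But $a\in\fat{P}$ is a pole with antipode $-a\in A$, so $\bdTits X=\suspend{a}{-a}$ and hence also $\bdTits X=\suspend{\ultra{\nu}a}{\ultra{\nu}(-a)}$; this gives $\Tits{\ultra{\nu}w}{\ultra{\nu}a}+\Tits{\ultra{\nu}w}{\ultra{\nu}(-a)}=\pi$, while $\Tits{\ultra{\nu}w}{\ultra{\nu}(-a)}\leq\Tits{w}{-a}=\pi/2$ by $1$-Lipschitzness, forcing $\Tits{\ultra{\nu}w}{\ultra{\nu}a}\geq\pi/2$ --- a contradiction. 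Thus $\bdTits X=A$ is a round $k$-sphere; comparing geometric dimensions yields $k=d$, so $\bdTits X$ is a round $d$-sphere and $A=\bd X$, which already establishes the ``in particular'' clause.

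Next I would extract the flat. By Kleiner's theorem (or by Theorem \ref{basic tool}) there is a $(d+1)$-flat $F\subseteq X$ whose boundary is a round $d$-sphere, necessarily all of $\bd X$. For $y\in X$ and $p_0\in F$, the Geoghegan--Ontaneda coarse geodesic completeness gives a ray $\gamma$ from $p_0$ meeting $\ball{y}{R}$; its endpoint lies in $\bd X=\bd F$, and the ray from $p_0$ to a point of $\bd F$ stays in the convex complete set $F$ (the nonnegative convex function $t\mapsto d(\gamma(t),F)$ is bounded and vanishes at $t=0$, hence is identically $0$), so $y$ lies within $R$ of $F$. Therefore $F$ is $R$-coarsely dense, and the same applies to every translate $gF$, a $(d+1)$-flat with $\bd(gF)=\bd X$; hence $d_{\mathrm{Haus}}(F,gF)\leq R$ for all $g\in G$. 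Consequently $F\hookrightarrow X$ is a quasi-isometry, $G$ is quasi-isometric to $F\cong\EE^{d+1}$, and by Gromov's polynomial growth theorem $G$ has a finite-index subgroup $H\cong\ZZ^{d+1}$. Since $H$ acts geometrically on $X$, the Flat Torus Theorem supplies an $H$-invariant, $H$-cocompact (hence $H$-periodic) $(d+1)$-flat $F'\subseteq\mathrm{Min}(H)$; its boundary is a round $d$-sphere inside $\bd X$, so $\bd F'=\bd X$, and the argument just given makes $F'$ coarsely dense, completing the proof.

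I expect the collapsing step --- showing $E'(\bdTits X)=\varnothing$ --- to be the heart of the matter: it is exactly there that the \emph{maximality} of $A$ is combined with the pole/suspension structure of $\fat{P}$ and with the fact that $\beta G$ preserves antipodal pairs inside $A$. The remaining passage, from a coarsely dense, coarsely $G$-invariant Euclidean flat to ``$G$ is virtually $\ZZ^{d+1}$'', is routine but leans on the external input of the quasi-isometric rigidity of $\ZZ^{n}$.
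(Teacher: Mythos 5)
Your proposal is correct, and its overall skeleton coincides with the paper's: invoke the preceding lemma to get $E'(A)=\varnothing$, hence $A=\fat{P}=\SS(\bdTits X)$ by theorem \ref{join decomposition of maximal incompressibles}; show $E'(\bdTits X)=\varnothing$, so $\bd X$ is a round $d$-sphere bounding (by Kleiner) a coarsely dense $(d+1)$-flat; conclude $G$ is quasi-isometric to $\EE^{d+1}$, hence virtually $\ZZ^{d+1}$ by an external rigidity result, and finish with the Flat Torus Theorem. Where you genuinely diverge is the central step $E'(\bdTits X)=\varnothing$: the paper minimizes the lower semi-continuous function $\omega\mapsto\Tits{A}{\ultra{\omega}q}$ over the compact space $\beta G$ and argues the minimum must vanish by maximality of $A$, so that every point collapses onto $\fat{P}$; you instead take $w\in E'(\bdTits X)$, use maximality to produce a compressible pair $\{a,w\}$ with $a\in A$, and kill it outright because every antipodal pair $\{a,-a\}\subset A$ is a dipole, so $\suspend{\ultra{\nu}a}{\ultra{\nu}(-a)}=\bdTits X$ for every $\nu$ (dipoles are $\beta G$-invariant), forcing $\Tits{\ultra{\nu}a}{\ultra{\nu}w}=\pi/2=\Tits{a}{w}$ exactly. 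This is a clean, purely metric contradiction that avoids the semicontinuity/minimization argument altogether, and it is correct. Your remaining deviations are cosmetic: you re-prove the coarse density of a flat with full boundary sphere via Geoghegan--Ontaneda and convexity rather than citing lemma 10 of \cite{[Swenson09]}; you verify explicitly that the periodic flat produced by the Flat Torus Theorem is itself coarsely dense (a detail the paper leaves implicit, and which the statement requires); and for virtual abelianness you appeal to quasi-isometric rigidity of $\ZZ^{n}$ where the paper cites Shalom. One small imprecision: Gromov's polynomial growth theorem by itself only yields virtual nilpotence; the input you actually need (and correctly name at the end) is the full statement that a group quasi-isometric to $\EE^{d+1}$ is virtually $\ZZ^{d+1}$, which requires more than the growth theorem alone.
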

\begin{proof} Let $A$ be a maximal incompressible subset of $\bd X$ which is stabilized by $G$. The preceding lemma shows $E'(A)$ is empty, and theorem \ref{join decomposition of maximal incompressibles} implies $A=\fat{P}$. Moreover, $A$ is the only non-degenerate maximal incompressible subset of $\bd X$.

Since $A$ is cone-compact (because it is Tits-compact), it follows that $\beta G$ stabilizes $A$. For any point $q\notin A$, we have that $A\cup\{q\}$ is a compressible set. Consider the function $f:\UF\to\RR$ defined by $f(\omega)=\Tits{A}{\ultra{\omega}q}$. By lower semi-continuity, $f$ attains its minimum at some $\omega_0\in\UF$. The invariance of $A$, however, implies this minimum must equal zero (or else $\ultra{\omega}(A\cup\{q\})=A\cup\{\ultra{\omega_0}q\}$ is an incompressible set strictly containing $A$). 

Thus, every point of $\bd X$ can be collapsed onto $\fat{P}$, implying $E'(\bdTits X)$ is empty, so that $\bd X=\fat{P}$. 

In particular, $\fat{P}$ bounds a $(d+1)$-flat $F'$ in $X$. By lemma 10 in \cite{[Swenson09]}, $F'$ is coarsely-dense in $X$, so that $G$ is quasi-isometric to a free-abelian group. 

By a well-known result of Shalom \cite{[Shalom-virtually-abelian]}, $G$ must be virtually Abelian. The flat torus theorem then provides the flat $F$ we are looking for.
\end{proof}

\section{Discussion}\label{section:discussion}
Henceforth let $d$ denote the geometric dimension of $\bdTits X$, and set \[\displaystyle d'=\max_{A\in\II}\dim(A)\,.\] %We assume that the sphere of poles $\fat{P}$ is empty (a weaker assumption than join-irreducibility). Recall that this implies $\rad{A}<\pi/2$ for every $A\in\II$, so that each $A\in\II$ has a unique circumcenter in $\bdTits X$.

Using the folding lemma we fix $\omega_0\in\beta G$, a maximal flat $F_0\subset X$ with boundary sphere $\SS_0$ so that $\ultra{\omega_0}$ maps $\bd X$ onto $\SS_0$.

Every $A\in\II$ may then be assigned a {\it volume} $\vol{A}$ equal to the $d'$-dimensional volume of $\ultra{\omega_0}A$ measured in the round sphere $\SS_0$. By incompressibility, this notion of volume is independent of the choice of $\omega_0$ and $\SS_0$.

Let $\hsm{P}$ denote the space of all compact convex (that is, compact connected $\pi$-convex) $d'$-dimensional polytopes contained in the sphere $\SS_0$. When endowed with the Hausdorff metric, $\hsm{P}$ is a compact space with the $d'$-dimensional volume function a continuous non-negative real-valued function on this space. We conclude that $\II$ has elements of maximal volume -- denote the collection of these elements by $\IImax$.
\begin{lemma}\label{max volume implies max non-collapsible} Suppose $A\in\IImax$. Then $A$ is a maximal non-collapsible set. 
\end{lemma}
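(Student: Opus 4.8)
The plan is to argue by contradiction, exploiting that an element of $\IImax$ has, among incompressible sets, both maximal dimension $d'$ and maximal $d'$-volume. Since an incompressible set is non-collapsible, it suffices to rule out a non-collapsible set strictly containing $A$, i.e.\ to show that $A\cup\{q\}$ is collapsible for every $q\in\bd X\minus A$. So I would fix such a $q$ and assume, for contradiction, that $A\cup\{q\}$ is non-collapsible. As $A$ is a maximal incompressible set it is Tits-closed, and by lemma \ref{lemma:incompressibles contained in core} it lies in $\core$; in particular $\Tits{q}{a}\leq\pi$ for all $a\in A$, and $A$ is a compact metric space in the Tits metric, so I may fix a countable Tits-dense subset $\{a_i\}_{i\geq1}$ of $A$.

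The key step — and, I expect, the main obstacle — is to produce a \emph{single} $\mu\in\beta G$ with $\ultra{\mu}(A\cup\{q\})$ incompressible. A sequential construction realises the maximal compression of $q$ against one point of $A$ at a time and need not converge, so instead I would minimise the functional
\[
	\Phi(\mu)=\sum_{i\geq1}2^{-i}\,\Tits{\ultra{\mu}q}{\ultra{\mu}a_i}\,,\qquad\mu\in\beta G\,,
\]
over $\beta G$. Each summand is $\leq\pi$ (as $\ultra{\mu}a_i\in\core$, which is $\beta G$-invariant) and is lower semicontinuous in $\mu$, being the composite of the cone-continuous orbit maps $\mu\mapsto\ultra{\mu}q$ and $\mu\mapsto\ultra{\mu}a_i$ (corollary \ref{continuous orbit maps}) with the cone-lower-semicontinuous Tits metric; hence $\Phi$ is lower semicontinuous and attains its infimum at some $\mu_0\in\beta G$. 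I would then check that $\ultra{\mu_0}(A\cup\{q\})$ is incompressible: its subset $\ultra{\mu_0}(A)$ is incompressible because every $\ultra{\sigma}$ restricts to an isometry on $A$, so a compressible pair must involve $\ultra{\mu_0}q$, i.e.\ there are $\sigma\in\beta G$ and $a_0\in A$ with $\Tits{\ultra{\sigma}\ultra{\mu_0}q}{\ultra{\sigma}\ultra{\mu_0}a_0}<\Tits{\ultra{\mu_0}q}{\ultra{\mu_0}a_0}$. The function $a\mapsto\Tits{\ultra{\mu_0}q}{\ultra{\mu_0}a}-\Tits{\ultra{\sigma}\ultra{\mu_0}q}{\ultra{\sigma}\ultra{\mu_0}a}$ on $A$ is nonnegative (since $\ultra{\sigma}$ is $1$-Lipschitz) and Tits-continuous (both terms are $1$-Lipschitz in $a$, using that $\ultra{\mu_0}$ is an isometry on $A$); being positive at $a_0$, it is positive at some $a_i$, and then $\Phi(\sigma\cdot\mu_0)<\Phi(\mu_0)$, contradicting the minimality of $\mu_0$. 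Thus $\mu:=\mu_0$ works.

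With $\mu$ in hand the remainder is bookkeeping. Non-collapsibility of $A\cup\{q\}$ means no pair $\{a,q\}$ is proximal, so $\ultra{\mu}$ is injective on $A\cup\{q\}$; hence $\ultra{\mu}(A\cup\{q\})$ is an incompressible set \emph{properly} containing $A':=\ultra{\mu}(A)$, with $\ultra{\mu}$ mapping $A$ isometrically onto $A'$. I would enlarge $\ultra{\mu}(A\cup\{q\})$ to a maximal incompressible set $\widetilde{A}\in\II$ by Zorn's lemma. Then $\widetilde{A}\supsetneq A'$, and since $A'\cong A$ it is a $\pi$-convex polytope of dimension $d'$ with $\vol{A'}=\vol{A}$; therefore $d'\leq\dim\widetilde{A}\leq d'$ (the right-hand inequality by the definition of $d'$) and $\vol{A}=\vol{A'}\leq\vol{\widetilde{A}}\leq\vol{A}$, so $\dim\widetilde{A}=d'$, $\vol{\widetilde{A}}=\vol{A}$ and $\widetilde{A}\in\IImax$. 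Finally I would apply $\ultra{\omega_0}$, which restricts to an isometry on every incompressible set: it carries the proper inclusion $\widetilde{A}\supsetneq A'$ to a proper inclusion of compact convex (proposition \ref{max incompressible is convex}) $d'$-dimensional subsets of $\SS_0$ having equal $d'$-volume $\vol{\widetilde{A}}=\vol{A}$ — which is absurd, since a convex body properly containing another convex body of the same dimension has strictly larger volume. This contradiction proves that $A$ is a maximal non-collapsible set.

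To summarise where the difficulty sits: everything except the construction of $\mu_0$ is routine manipulation of incompressibility, volume, and the folding lemma's sphere $\SS_0$. The delicate point is globalising from finitely many points of $A$ to all of $A$ simultaneously, which the weighted-sum-plus-compactness trick handles, provided one is careful that on the incompressible set $A$ the relevant distance functions are honestly Tits-continuous in the point, so that a strict improvement at $a_0$ transfers to a dense point $a_i$ and genuinely lowers $\Phi$.
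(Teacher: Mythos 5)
Your proof is correct, and its overall strategy is the same as the paper's: both arguments minimize a Tits-distance functional over the compact space $\beta G$ (cone-continuity of the orbit maps plus lower semi-continuity of the Tits metric give a minimizer) and then contradict the maximality of $d'$ and of $\vol{A}$. The difference lies in the functional and in how the delicate step is handled. The paper takes a single point $z\in\bd X$, minimizes $f(\omega)=\Tits{A}{\ultra{\omega}z}$, and asserts in one line that if the minimum were positive then $\ultra{\nu}A\cup\{\ultra{\nu}z\}$ would be incompressible, concluding $f(\nu)=0$ and hence proximality of $z$ with a point of $A$. You instead minimize the weighted sum $\Phi(\mu)=\sum_i 2^{-i}\Tits{\ultra{\mu}q}{\ultra{\mu}a_i}$ over a countable Tits-dense subset of $A$, which lets you verify honestly that compressing any pair $(\ultra{\mu_0}q,\ultra{\mu_0}a_0)$ propagates (by $1$-Lipschitz continuity on the incompressible set $A$) to some dense point $a_i$ and strictly lowers $\Phi$ -- precisely the point the paper leaves implicit, since compressing an arbitrary pair need not decrease the distance from the moving point to the set, so your device genuinely tightens that step. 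Your endgame -- enlarging $\ultra{\mu_0}(A\cup\{q\})$ to $\widetilde A\in\II$, forcing $\dim\widetilde A=d'$ and $\vol{\widetilde A}=\vol{A}$, and ruling out a proper inclusion of $d'$-dimensional convex subsets of $\SS_0$ with equal $d'$-volume -- is the same maximality contradiction the paper invokes, just spelled out; and framing the argument as a contradiction with non-collapsibility of $A\cup\{q\}$, rather than concluding directly that the minimum is zero, is an immaterial difference.
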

\begin{proof} Since $A$ is incompressible, it is not collapsible. Take any point $z\in\bd X$, and consider the function $f(\omega)=\Tits{A}{\ultra{\omega}z}$ on $\beta G$. Let $\nu$ be a minimum point of this function. If $f(\nu)>0$, then $\ultra{\nu}A\cup\{\ultra{\nu}z\}$ is incompressible, which is impossible, by the maximality properties of $d'$ and $\vol{A}$. We conclude that $f(\nu)=0$ and $\ultra{\nu}z\in\ultra{\nu}A$, proving that $A$ is a maximal non-collapsible set.
\end{proof}
\begin{prop}[Strong Folding Lemma (Theorem D)]\label{strong folding} Let $A\in\IImax$. Then:
\begin{enumerate}
	\item $\ultra{\omega}A\in\IImax$ for all $\omega\in\beta G$;
	\item There exists $\nu\in\beta G$ such that $\ultra{\nu}\bd X=\ultra{\nu}A$.
\end{enumerate}
\end{prop}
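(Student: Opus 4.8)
The plan is to derive part (2) from part (1), so I would establish (1) first. Fix $\omega\in\beta G$. Since $A$ is incompressible, $\ultra{\omega}$ restricts to an isometry on $A$, so $\ultra{\omega}A$ is isometric to $A$ and hence, by Theorem C, a $d'$-dimensional compact convex spherical polytope with $\vol{\ultra{\omega}A}=\vol{A}$ (the volume is an isometry invariant of incompressible $d'$-sets). Next, $\ultra{\omega}A$ is itself incompressible: if some $\sigma$ compressed a pair $\{\ultra{\omega}x_0,\ultra{\omega}y_0\}\subseteq\ultra{\omega}A$, then, using the Diagonal Principle and that $\ultra{\omega}|_{A}$ is an isometry, $\Tits{\ultra{\sigma\omega}x_0}{\ultra{\sigma\omega}y_0}=\Tits{\ultra{\sigma}\ultra{\omega}x_0}{\ultra{\sigma}\ultra{\omega}y_0}<\Tits{\ultra{\omega}x_0}{\ultra{\omega}y_0}=\Tits{x_0}{y_0}$, exhibiting $\{x_0,y_0\}\subseteq A$ as compressible — a contradiction. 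So $\ultra{\omega}A$ lies in some maximal incompressible $\bar B\in\II$, which has dimension $d'$ (it contains the $d'$-dimensional $\ultra{\omega}A$, and $d'$ is the maximum attained in $\II$); and were the inclusion proper, applying the folding operator $\ultra{\omega_0}$ — an isometry, hence injective, on $\bar B$ — would give a proper inclusion of $d'$-dimensional convex subsets of $\SS_0\cong\SS^d$, so that $\vol{\bar B}>\vol{\ultra{\omega}A}=\vol{A}$, contradicting the maximality of $\vol{A}$. Hence $\ultra{\omega}A=\bar B\in\IImax$.

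For (2), put $A_0:=\ultra{\omega_0}A\subseteq\SS_0$; then $A_0\in\IImax$ by (1), and more generally $\ultra{\omega}A_0\in\IImax$ for every $\omega$. I would rely on two facts. First, by Lemma \ref{max volume implies max non-collapsible} each $B\in\IImax$ is a maximal non-collapsible set, so for any $w\in\bd X$ there is $\sigma$ with $\ultra{\sigma}w=\ultra{\sigma}b$ for some $b\in B$, i.e.\ $\ultra{\sigma}w\in\ultra{\sigma}B$. Second, by the Diagonal Principle, ``once absorbed, stays absorbed'': $\ultra{\omega}w\in\ultra{\omega}B$ implies $\ultra{\tau\omega}w\in\ultra{\tau\omega}B$ for all $\tau$. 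Since $\SS_0$ is a round sphere it is Tits-separable; I would fix a Tits-dense sequence $(z_k)_{k\ge1}$ in $\SS_0$ and construct $\mu_0=\delta_e$ (the identity), $\mu_1,\mu_2,\ldots$ so that $\ultra{\mu_k}z_i\in\ultra{\mu_k}A_0$ for all $i\le k$: given $\mu_k$, apply the first fact to $B=\ultra{\mu_k}A_0\in\IImax$ and $w=\ultra{\mu_k}z_{k+1}$ to obtain $\tau_{k+1}$ with $\ultra{\tau_{k+1}}w\in\ultra{\tau_{k+1}}B$, and set $\mu_{k+1}=\tau_{k+1}\mu_k$; the second fact preserves the earlier absorptions and the choice of $\tau_{k+1}$ takes care of $z_{k+1}$.

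The heart of the argument is then to absorb the whole dense sequence by a single ultra-filter. I would take $\mu:=\ulim{\xi}{k}\mu_k$ in the compact space $\beta G$, for $\xi$ a non-principal ultra-filter on $\NN$. Fixing $i$ and writing $\ultra{\mu_k}z_i=\ultra{\mu_k}a_k$ with $a_k\in A_0$ for $k\ge i$, set $a_\infty:=\ulim{\xi}{k}a_k\in A_0$. Cone-continuity of $\omega\mapsto\ultra{\omega}y$ gives $\ultra{\mu}z_i=\ulim{\xi}{k}\ultra{\mu_k}a_k$ and $\ultra{\mu}a_\infty=\ulim{\xi}{k}\ultra{\mu_k}a_\infty$. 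Because the cone and Tits topologies agree on the Tits-compact set $A_0$, $\ulim{\xi}{k}\Tits{a_k}{a_\infty}=0$, so by $1$-Lipschitzness $\ulim{\xi}{k}\Tits{\ultra{\mu_k}a_k}{\ultra{\mu_k}a_\infty}=0$ as well, and lower semi-continuity of the Tits metric with respect to the cone topology forces $\Tits{\ultra{\mu}z_i}{\ultra{\mu}a_\infty}=0$, i.e.\ $\ultra{\mu}z_i=\ultra{\mu}a_\infty\in\ultra{\mu}A_0$. Consequently the set of $z\in\SS_0$ with $\ultra{\mu}z\in\ultra{\mu}A_0$ — the preimage of the Tits-closed set $\ultra{\mu}A_0$ under the Tits-continuous map $z\mapsto\ultra{\mu}z$ — is Tits-closed and contains the Tits-dense set $\{z_i\}$, hence equals $\SS_0$; with $A_0\subseteq\SS_0$ this gives $\ultra{\mu}\SS_0=\ultra{\mu}A_0$. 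Finally $\nu:=\mu\cdot\omega_0$ satisfies $\ultra{\nu}\bd X=\ultra{\mu}\ultra{\omega_0}\bd X=\ultra{\mu}\SS_0=\ultra{\mu}A_0=\ultra{\mu}\ultra{\omega_0}A=\ultra{\nu}A$.

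I expect the genuine obstacle to be exactly this passage to the limit ultra-filter, where the friction between the compact-but-coarse cone topology and the fine-but-non-compact Tits topology must be resolved; the argument goes through only because both topologies are tamed on the Tits-compact ``model'' $A_0$ (on which they coincide) and because the Tits metric is lower semi-continuous for the cone topology. The reduction to $\SS_0$ via the folding lemma is what makes a Tits-dense \emph{sequence} available in the first place. By comparison, part (1) and the inductive construction are routine bookkeeping with $d'$, volume, and the one-point absorption provided by Lemma \ref{max volume implies max non-collapsible}.
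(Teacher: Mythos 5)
Your proof is correct and follows the paper's route in all essentials: part (1) is the paper's argument (incompressibility of $\ultra{\omega}A$ plus maximality of $d'$ and of the volume), and for part (2) you use the same ingredients --- Lemma \ref{max volume implies max non-collapsible} applied to elements of $\IImax$, a countable Tits-dense subset of $\SS_0$, and the same inductive products $\mu_{k+1}=\tau_{k+1}\cdot\mu_k$ absorbing one point at a time, with the Diagonal Principle preserving earlier absorptions --- before composing with $\omega_0$ at the end. The one place you genuinely diverge is the passage to a single ultra-filter: the paper introduces the nested sets $\Omega_k=\left\{\omega\,:\,\ultra{\omega}d_n\in\ultra{\omega}A,\ n\le k\right\}$, asserts they are closed in $\beta G$ by continuity of $\omega\mapsto\ultra{\omega}$, and concludes by compactness of $\beta G$, whereas you take the explicit ultralimit $\mu=\ulim{\xi}{k}\mu_k$ and verify the absorptions at $\mu$ directly, using that the cone and Tits topologies coincide on the Tits-compact $A_0$, that each $\ultra{\mu_k}$ is Tits $1$-Lipschitz, and lower semi-continuity of the Tits metric with respect to the cone topology. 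This buys more than a stylistic variant: since the operators $\ultra{\omega}$ are not cone-continuous, closedness of $\Omega_k$ does not follow formally from separate continuity of the orbit maps $\omega\mapsto\ultra{\omega}z$, and your cone/Tits interplay on the compact model $A_0$ is precisely the justification that step needs; conversely, the paper's formulation (granting that closedness) is marginally more general, applying to an arbitrary Tits-compact $K\subset\bdTits X$ rather than just $\SS_0$, though only $K=\SS_0$ is ever used.
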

\begin{proof} The first claim follows from the fact that $\ultra{\omega}A$ is incompressible: $\ultra{\omega}A$ is contained in an element $\tilde A$ of $\II$, but, being of maximal volume, $\ultra{\omega}A$ must coincide with $\tilde A$.

We claim that every compact subspace $K$ of $\bdTits X$ has $\nu'\in\beta G$ such that $\ultra{\nu'}K\subset\ultra{\nu'}A$. By the Tits-continuity of all $\ultra{\omega}$, $\omega\in\beta G$, it suffices to fix a countable dense subset $D$ of $K$ and find $\nu'\in\beta G$ with $\ultra{\nu'}x\in\ultra{\nu'}A$ for all $x\in D$. Writing $D=\{d_n\}_{n=1}^\infty$, use the preceding lemma to realize the following inductive construction:
\begin{itemize}
	\item $\nu'_1\in\beta G$ with $\ultra{\nu'_1}d_1\in\ultra{\nu'_1}A$.
	\item Suppose $\nu'_k\in\beta G$ with $\ultra{\nu'_k}\{d_1,\ldots,d_k\}\subset\ultra{\nu'_k}A$. By (1) and the preceding lemma, $\ultra{\nu'_k}A$ is a maximal non-collapsible set, so there exists an $\alpha_k\in\beta G$ such that $\ultra{\alpha_k}\left(\ultra{\nu'_k}d_{k+1}\right)\in\ultra{\alpha_k}\left(\ultra{\nu'_k}A\right)$. Thus, $\nu'_{k+1}=\alpha_k\cdot\nu'_k$ satisfies $\ultra{\nu'_{k+1}}\{d_1,\ldots,d_{k+1}\}\subset\ultra{\nu'_{k+1}}A$. 
\end{itemize}
We conclude that none of the sets
\begin{displaymath}
	\Omega_k=\left\{\omega\in\beta G\,\Big|\,\ultra{\omega}d_n\in\ultra{\omega}A\,,\;\;n=1,\ldots,k\right\}
\end{displaymath}
are empty. By continuity of the map $\omega\mapsto\ultra{\omega}$, the sets above are all closed subsets of $\beta G$. We conclude there is an element $\nu'\in\bigcap_{k\in\NN}\Omega_k$. This is the $\nu'\in\beta G$ we are looking for.\\ 

To finish the proof of the proposition, we pick $K=\SS_0$ so that $\ultra{\nu'\cdot\omega_0}\bd X=\ultra{\nu'}\SS_0\subset\ultra{\nu}A$. Thus, the second claim holds with $\nu=\nu'\cdot\omega_0$, as desired.
\end{proof}
An immediate conclusion from this result is:
\begin{cor}\label{all IImax are isometric} Fix $A_0\in\IImax$. Then all elements of $\IImax$ are isometric to $A_0$ and every incompressible subset of $\bdTits X$ is isometric to a subset of $A_0$.
\end{cor}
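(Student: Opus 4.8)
The plan is to read both statements off the Strong Folding Lemma (Proposition~\ref{strong folding}); no new geometric input is required. Fix $A_0\in\IImax$, and using part~(2) of that proposition choose $\nu\in\beta G$ with $\ultra{\nu}\bd X=\ultra{\nu}A_0$. The observation I would record first is that for \emph{any} incompressible subset $B\subseteq\bdTits X$ the operator $\ultra{\nu}$ restricts to an isometry of $B$ onto $\ultra{\nu}B$, not merely to a $1$-Lipschitz map: it is $1$-Lipschitz by lower semi-continuity of the Tits metric, and were it to strictly shrink the distance between two points of $B$ then that pair would be compressible. In particular $\ultra{\nu}$ is an isometry on $A_0$, so $\ultra{\nu}A_0$ is an isometric copy of $A_0$.

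The second assertion follows immediately. Given an incompressible $B\subseteq\bdTits X$, we have $\ultra{\nu}B\subseteq\ultra{\nu}\bd X=\ultra{\nu}A_0$; since $\ultra{\nu}$ is an isometry of $B$ onto $\ultra{\nu}B$ and (bijectively) an isometry of $A_0$ onto $\ultra{\nu}A_0$, it follows that $B$ is isometric to a subset of $A_0$.

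For the first assertion, take an arbitrary $A\in\IImax$ and apply the previous paragraph with $B=A$: this presents $A$, via $\ultra{\nu}$, as an isometric copy of the subset $\ultra{\nu}A\subseteq\ultra{\nu}A_0$. It remains to upgrade the inclusion $\ultra{\nu}A\subseteq\ultra{\nu}A_0$ to an equality, and here I would argue by a volume count. By Proposition~\ref{strong folding}(1) both $\ultra{\nu}A$ and $\ultra{\nu}A_0$ belong to $\IImax$, so both are $d'$-dimensional compact convex spherical polytopes (Theorem~C) of the same maximal $d'$-dimensional volume. A closed subset of the $d'$-dimensional polytope $\ultra{\nu}A_0$ carrying its full $d'$-volume cannot omit a non-empty relatively open set, hence $\ultra{\nu}A=\ultra{\nu}A_0$; composing the two isometries gives $A\cong\ultra{\nu}A=\ultra{\nu}A_0\cong A_0$. (Alternatively, to bypass volumes altogether: running Proposition~\ref{strong folding}(2) once for $A_0$ and once for $A$ produces isometric embeddings $A\hookrightarrow A_0$ and $A_0\hookrightarrow A$ of compact metric spaces, and an isometric self-embedding of a compact metric space is automatically surjective, so each of these embeddings is an isometry.)

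I do not anticipate a genuine obstacle; the corollary is a harvest of the Strong Folding Lemma. The only two spots where a short argument is needed are the ones used above: that incompressibility upgrades $\ultra{\nu}$ from a $1$-Lipschitz map to an isometry on the relevant sets, and that a closed subset of a $d'$-dimensional convex spherical polytope of full $d'$-volume must exhaust it (equivalently, the compact-metric Cantor--Schr\"oder--Bernstein phenomenon). Both are elementary once Theorem~C has described the elements of $\II$ as compact convex spherical polytopes.
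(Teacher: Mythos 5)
Your proof is correct and follows essentially the same route as the paper: apply the Strong Folding Lemma to get $\ultra{\nu}\bd X=\ultra{\nu}A_0$, note that incompressibility makes $\ultra{\nu}$ an isometry on the relevant sets, and use maximality of the $d'$-volume to upgrade the inclusion $\ultra{\nu}A\subseteq\ultra{\nu}A_0$ to equality. Your alternative closing step (two isometric embeddings between compact metric spaces forcing surjectivity) is a pleasant way to avoid the volume count, but it is only a cosmetic variation.
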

\begin{proof}
Fix $A,B\in\IImax$ and let $\nu\in\beta G$ satisfy $\ultra{\nu}\bd X=\ultra{\nu}A$. Then $\ultra{\nu}B\subseteq\ultra{\nu}A$ and the maximality of $\vol{B}$ guarantees $\ultra{\nu}B=\ultra{\nu}A$. By incompressibility of $A$ and $B$, $A$ is isometric to $B$.
\end{proof}
\begin{cor}\label{special IImax element} There are a flat $F\subset X$ and an element of $A_0\in\IImax$ such that $A_0\subseteq\bd F$ and $A_0$ intersects every element of $\minG$.
\end{cor}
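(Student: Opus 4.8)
The plan is to combine the Strong Folding Lemma (Proposition \ref{strong folding}) with the flat $F_0$ and ultra-filter $\omega_0$ fixed at the start of this section; the corollary is essentially a bookkeeping consequence of these. First I would pick any $A\in\IImax$ and apply part (2) of Proposition \ref{strong folding} to obtain $\nu'\in\beta G$ with $\ultra{\nu'}\bd X=\ultra{\nu'}A$, so that $\nu'$ already collapses the whole boundary onto a maximal-volume incompressible set. The only feature missing from the conclusion is that this set should lie inside the boundary sphere of a flat.

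To arrange that, I would post-compose with $\omega_0$: set $\nu=\omega_0\cdot\nu'$, so that by the Diagonal Principle $\ultra{\nu}=\ultra{\omega_0}\circ\ultra{\nu'}$. Since $\ultra{\nu'}\bd X\subseteq\bd X$, applying $\ultra{\omega_0}$ gives $\ultra{\nu}\bd X\subseteq\ultra{\omega_0}\bd X=\SS_0=\bd F_0$, while at the same time $\ultra{\nu}\bd X=\ultra{\omega_0}\ultra{\nu'}\bd X=\ultra{\omega_0}\ultra{\nu'}A=\ultra{\nu}A$. Putting $A_0=\ultra{\nu}A$, part (1) of Proposition \ref{strong folding} gives $A_0\in\IImax$, and $A_0\subseteq\bd F_0$, so one takes $F=F_0$.

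It remains to verify that $A_0$ meets every $M\in\minG$, which is the one genuinely new (if routine) point. Here I would use that $M$ is cone-closed and $G$-invariant: for $z\in M$ the orbit $G\cdot z$ lies in $M$, hence $\ultra{\nu}z\in M$, i.e. $\ultra{\nu}M\subseteq M$; at the same time $\ultra{\nu}M\subseteq\ultra{\nu}\bd X=A_0$, and $\ultra{\nu}M\neq\varnothing$ because $M\neq\varnothing$. Therefore $\varnothing\neq\ultra{\nu}M\subseteq A_0\cap M$, as desired.

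I do not expect a real obstacle here: all the substance has been done in Proposition \ref{strong folding} and the Folding Lemma, and what is left is algebra in the semigroup $\beta G$. The two points that need care are the order of composition — $\omega_0$ must be applied last so that the image lands in $\SS_0$ — and the $\beta G$-invariance of minsets, which is exactly the cone-closedness argument just given.
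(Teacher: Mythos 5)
Your proposal is correct and follows essentially the same route as the paper: apply the Strong Folding Lemma to get $\nu'$ with $\ultra{\nu'}\bd X=\ultra{\nu'}A$, compose with the fixed $\omega_0$ so the image lands in $\SS_0=\bd F_0$, and use that minsets, being cone-closed and $G$-invariant, are $\beta G$-invariant. Your write-up just makes explicit the bookkeeping (order of composition via the Diagonal Principle, membership of $A_0$ in $\IImax$ via part (1)) that the paper's terse proof leaves implicit.
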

\begin{proof} Once again, let $A\in\IImax$ and $\nu\in\beta G$ satisfy $\ultra{\nu}\bd X=A$. Let $F_0$ and $\omega_0\in\beta G$ be as above. Set $A_0=\ultra{\omega_0}A$. Then every $M\in\minG$ has $\ultra{\omega_0\cdot\nu}M\subseteq M\cap A_0$. 
\end{proof}
At present, little is known about how distinct elements of $\IImax$ interact. %We have:
%\begin{lemma} Suppose $A,B\in\IImax$ are distinct. Then $A\cap B$ has co-dimension at least one in both $A$ and $B$.
%\end{lemma}
%\begin{proof}
%Let $\nu_0\in\beta G$ and $A_0\in\IImax$ satisfy $\ultra{\nu_0}\bd X=A_0$. Suppose $A,B\in\IImax$ are distinct, and suppose there is a flat $F\subset X$ with $\bd F$ containing $A\cup B$.
%
%Consider a point $p\in A_0\minus\ultra{\nu_0}(A\cap B)$. Find $a\in A\minus B$ and $b\in B\minus A$ both mapped to $p$ under $\ultra{\nu_0}$. Let $H$ be the hyperplane of the sphere $\bd F$ obtained as the perpendicular bisector of the points $a$ and $b$. For any point $n\in A\cap B$ we must have \[\Tits{n}{a}=\Tits{\ultra{\nu_0}n}{p}=\Tits{n}{b}\,,\] by incompressibility of both $A$ and $B$. Thus, $A\cap B\subseteq H$.
%
%Now if $A,B\in\IImax$ are arbitrary, apply the folding given by $\omega_0$. Either $\ultra{\omega_0}A=\ultra{\omega_0}B$, or there is a hyperplane $H$ of the sphere $\bd F$ containing the intersection $\ultra{\omega_0}A\cap\ultra{\omega_0}B$ (and hence containing $\ultra{\omega_0}(A\cap B)$). Thus $A\cap B$ has co-dimension at least one in both $A$ and $B$.
%
%Consider the subset $\hsm{U}$ of all $\omega\in\beta G$ with $\ultra{\omega}$ mapping $\bd X$ onto $\SS_0$.
%
%\end{proof}
We conjecture that spheres such as $\SS_0$ are {\it tiled} by isometric copies of $A_0$, as is the case for co-compact lattices of Euclidean buildings. This hints at the possibility of $\bigcup\IImax$ carrying the structure of a building whose apartments are such tiled spheres. This makes us hopeful regarding the role our approach could play in resolving the Rank Rigidity Conjecture: the union of $\IImax$ forms a $\beta G$-invariant structure in $\bdTits X$; though recovering the properties of a building for this structure directly may seem like a hard task at the moment, we believe it should be possible to use $\IImax$ in conjunction with existing metric characterizations of spherical buildings to either prove or disprove the conjecture.

\bigskip

\bigskip

\bibliographystyle{siam}
\bibliography{rigidityref}

\end{document}